%% file: main.tex
\PassOptionsToPackage{dvipsnames}{xcolor}
\documentclass[leqno]{siamart251104}

\usepackage{mydef}
\usepackage[english]{babel}

\usepackage{amsmath, amsfonts}
\usepackage{mathrsfs}
\usepackage{bbm}
\usepackage{xcolor}
\usepackage{graphicx}
\usepackage{tikz}
\usepackage{algorithm}
\usepackage{algpseudocode}
\usepackage{subcaption}
\usepackage[
  mode = math,
  inter-unit-product = \ensuremath{\,},
]{siunitx}
\usetikzlibrary{arrows.meta}

\usepackage{tikz}
\usetikzlibrary{arrows,decorations,shapes, patterns}

\usepackage{pgfplots}
\pgfplotsset{compat=1.18}

\pgfmathsetmacro{\sieL}{7.3}
\pgfmathsetmacro{\sieR}{0.1117}
\pgfmathsetmacro{\vL}{3.9}
\pgfmathsetmacro{\vR}{0.5}
\pgfmathsetmacro{\cv}{0.000389}
\pgfmathsetmacro{\Tnot}{150}
\pgfmathdeclarefunction{atan_interp}{4}{%
  \pgfmathparse{0.5 * (#1 + #2) + 0.5 * (#2 - #1) * atan(#3 * #4) / atan(#3)}
}

\usepackage{enumitem}
\setlist[enumerate]{leftmargin=.5in}
\setlist[itemize]{leftmargin=.5in}

\usepackage{physics}
\usepackage[numbers]{natbib}

\makeatletter
\renewcommand*\env@matrix[1][*\c@MaxMatrixCols c]{%
  \hskip -\arraycolsep
  \let\@ifnextchar\new@ifnextchar
  \array{#1}}
\makeatother

\begin{document}

\newcommand\footnotemarkfromtitle[1]{%
	\renewcommand{\thefootnote}{\fnsymbol{footnote}}%
	\footnotemark[#1]%
	\renewcommand{\thefootnote}{\arabic{footnote}}}

\newcommand{\TheTitle}{Analysis, thermodynamics, and a numerical solver for a pressure-temperature equilibrium closure of the four-equation model}
\newcommand{\TheAuthors}{B. Clayton, J. McConnell, C. Solomon}

\headers{Thermodynamics of PTE}{\TheAuthors}

\title{{\TheTitle}\thanks{Draft version, \today \funding{
			BC, JM, and CS, acknowledge the support of the Eulerian Applications Project (within the Advanced Simulation and Computing program) at LANL.
            LANL is operated by Triad National Security, LLC, for the National Nuclear Security Administration of the U.S. Department of Energy (Contract No. 89233218CNA000001).}}}

\author{
	Bennett Clayton\footnotemark[2]
	\and 
	Joshua McConnell\footnotemark[2]
    \and 
    Clell Solomon\footnotemark[2]
}

\maketitle

\renewcommand{\thefootnote}{\fnsymbol{footnote}}

\footnotetext[2]{%
  X Computational Physics Division, Los Alamos National Laboratory, P.O. Box 1663, Los Alamos, NM, 87545, USA.}

\renewcommand{\thefootnote}{\arabic{footnote}}

\begin{abstract}
    We analyze an often used closure model for multi-material hydrodynamics where pressure-temperature equilibrium (PTE) is assumed for every state; emphasis is placed on tabular equations of state.
    This multi-material model is often referred to as the four-equation model.
    The identification of the admissible set is presented and is proven to be convex, setting the foundation for development of invariant-domain preserving methods for this model.
    A novel numerical method is presented for solving the highly nonlinear system for the equilibrated pressure and temperature with an arbitrary number of materials.
    This new method is compared with some traditional iterative solvers through a collection of different tests.
    Additionally, we provide a detailed analysis of the thermodynamics of the mixture model for general equations of state and prove existence and uniqueness of the pressure-temperature equilibrium solution under some thermodynamic assumptions.
\end{abstract}

\begin{keywords}
  thermodynamics, tabulated equation of state, mixture model, pressure temperature equilibrium, iterative methods, Newton methods, four-equation model, admissible set.
\end{keywords}
  
\input{introduction}
\input{model_problem}
\input{pte_solver}
\input{cyclic_method}
\input{eos}
\input{numerical_results}
\input{conclusion}

\appendix

\input{appendix}

\section*{Acknowledgments}
The authors thank their colleagues from Los Alamos National Laboratory: Tariq Aslam, Eduardo Lozano, and Jeffery Peterson for their insightful discussions on thermodynamics and equations of state.

\bibliography{sample}

\end{document}

%% file: introduction.tex
\section{Introduction}
The simulation of multi-material flows is a complex physical process; wherein, many choices of models are available to describe this process. 
These often depend on the material phases, length scales, and the application of interest.
A common choice for modeling approximately inviscid compressible flows is the compressible Euler equations.
As is well known, the single material compressible Euler equations are underdetermined and must be closed with an equation of state (EOS).
Similarly, for multiple materials (multi-component compressible Euler), a closure model must be prescribed for how materials interact.
There have been a variety of models and assumptions derived from the Euler equations to model multi-material interactions.
In the literature, they are often referred to as the four-, five-, six-, and seven-equation models, indicative of the number of equations to solve when only two materials are present.
For example, the five-equation model, \citet[Sec.~2.3]{shyue1998efficient} and \citet{allaire2002five}, assumes that the pressure is equilibrated between the materials, and has an additional equation to be solved for the volume fraction. 
The six-equation model developed in \citet{saurel2009simple} drops the pressure equilibrium assumption from \citep{allaire2002five} which results in two energy equations to solve.
Lastly, the seven-equation models of \citet{saurel1999multiphase} and \citet{baer1986two} have all of the previous assumptions but now assumes that material velocities are not in equilibrium.
In \citet[Sec.~4]{flaatten2011relaxation} a hierarchy of some of these models (referred to as ``relaxation systems'') is presented; see also \citet{flaatten2010wave} and \citet{lund2012hierarchy}.
For more discussion about these different models (among others) see \citet{saurel2008modelling, saurel2009simple}.
Another multi-material model which does not fit into the previously mentioned modeling framework is the work of \citet[Sec.~2.2]{shyue1998efficient} which introduces a $\gamma$-based model for mixtures of stiffened gas EOS.

The focus of this paper is on the four-equation model which assumes pressure, temperature, and velocity equilibrium of the mixtures. Specifically, we focus on the analysis, solvability, and numerical algorithms for the pressure-temperature equilibrium (PTE) system.
This model is often coupled with other chemical and physical effects; however, the underlying assumption is that only two mass equations, one momentum equation and one energy equation are solved.
There are also many applications of this model in several areas, see \citep{lund2011depressurization, de2017homogeneous, saurel_2024, peden2026large}; however, the specific details for solving for the equilibrated pressure and temperature are often omitted.
Furthermore, we assume that the material mass fractions remain unchanged during the pressure-temperature equilibrium closure of the four-equation model; i.e. chemical equilibrium and mass transfer are not imposed.
This is seen in our assumption on the Gibbs free energy of the mixture; that is, the mixture Gibbs free energy is defined to be the mass fraction average of each of the component Gibbs free energies.
This is unlike the commonly used \textit{homogeneous equilibrium model} (HEM) which assumes equality of each material Gibbs free energy (see \citet[Sec.~4.3]{stewart1984two}, \citet{clerc2000numerical}, \citet{foll2019use} and references therein).
The general four-equation model was popularized in \citet{saurel2016general} under a relaxation of the Gibbs free energy towards equilibrium; earlier investigations into this model are seen in \citet{flaatten2010wave}.
In many cases, the EOS used are typically an ideal gas mixture or an ideal and stiffened gas mixture; see \citep{gouasmi2020formulation, renac2021entropy, wong2022positivity, sementilli2023scalable, trojak2024positivity, menshov2025sharp, CLAYTON2026107159,  collis2026robust}.
For more general equations of state, \citet{aslam_2021} proposed a method for obtaining the equilibrated pressure and temperature for a mixture of Mie-Gr{\"u}neisen equations of state.
Further analysis of this model is provided in \citet{rai2022evaluation} which also includes alternative closure models to PTE.
The general thermodynamics of this mixture model is discussed in more detail in \citet{grove2010pressure, grove2019some}.

One of the difficult challenges with solving the four-equation model is to rapidly, robustly, and accurately calculate the equilibrated pressure and temperature state of the mixture as the pressure is necessary for evolution of the conserved variables.
In the chemical engineering field, minimization methods are typically applied to the Gibbs free energy to compute the phase equilibrium state.
Many examples can be found in the literature, for example, \citep{du1987phase, teh2002study, sofyan2003multiphase, zhang2011review, xu2023precipitation}; however, the model assumptions vary greatly, and are not directly related to the four-equation model.
The PTE model that we are concerned with is simpler and we place emphasis on equations of state which are thermodynamically stable.
Therefore we target the nonlinear problem with direct iterative solvers rather than minimization techniques.

\subsection{Novel contributions}
To the authors best knowledge, little has been published regarding the mixture of arbitrary or tabular equations of state under PTE.
The purpose of this paper is twofold --
one, to present a rigorous analysis of the PTE closure and the constraints for which a solution exists,
and, two, provide a robust numerical algorithm for solving the nonlinear system for pressure and temperature.
The existence of the PTE solution is directly related to the admissible set for the four-equation model which can be used for development of positivity-preserving or invariant-domain preserving numerical methods.
The numerical algorithm we introduce for solving PTE employs the \textit{cyclic rule} in order to construct tangent line approximations in the $P$-$T$ solution space, from which a simple line intersection calculation provides an approximation of the root.
This is combined with simple one-dimensional Newton steps, to construct a robust iterative solver.
While this method is often more expensive than typical Newton solvers, it is more robust to the initial starting location based on our numerical experiments.
This novel method is not specific to thermodynamics and could be applied to any nonlinear system from $\Real^2$ to $\Real^2$ (assuming the necessary derivatives and tangent lines exist).
In summary the novel contributions are:
\begin{itemize}
    \item Identification of the admissible set for the four-equation model with arbitrary equations of state.
    \item Proof of existence and uniqueness for solutions to the PTE system.
    \item Development of a novel iterative method for solving nonlinear equations from $\Real^2$ to $\Real^2$ which utilizes the cyclic rule.
    \item Application of this method to the PTE system.
    \item A numerical study of different iterative methods for solving the PTE system with tabulated equations of state.
\end{itemize}

\subsection{Organization}
In Section~\ref{sec:pte_model} we describe the four-equation model and provide a brief review of necessary thermodynamics for single material and mixtures.
In Section~\ref{sec:admissible_set} the admissible set is determined which outlines a necessary condition for existence of solutions to the nonlinear PTE system.
The main theoretical result of this paper is provided in Theorem~\ref{thm:existence} which outlines the necessary assumptions for existence of a unique PTE solution.
In Section~\ref{sec:tabular_approximation} we introduce a tabular equation of state approximation and discuss some of the necessary details and complications with using tabular EOS.
In Section~\ref{sec:pte_solving} we introduce the numerical details for solving PTE as well as some common methods for solving nonlinear systems.
Then in Section~\ref{sec:the_cyclic_method}, the novel numerical algorithm is introduced in a general form with application to the PTE system.
The details of several equations of state that we use for numerical illustrations are outlined in Section~\ref{sec:eos} and we close with a variety of numerical tests presented in Section~\ref{sec:numeric_results}.

%% file: model_problem.tex
\section{The four-equation model} \label{sec:pte_model}%
The four-equation model, which we also refer to as the multi-component Euler equations, is described by the following equations
\begin{subequations}
\begin{align}
    \partial_t \big((\alpha_m \rho_m)(\bx, t)\big) + \nabla \cdot \Big( (\alpha_m \rho_m)(\bx, t) \frac{\bsfm(\bx, t)}{\rho(\bx, t)} \Big) &= 0, \label{eq:mass_cons} \\ 
    \partial_t \bsfm(\bx, t) + \nabla \cdot \Big(\frac{\bsfm(\bx, t)}{\rho(\bx, t)} \otimes \bsfm(\bx, t) + P(\bx, t) \polI_d\Big) &= \mathbf{0}, \label{eq:momentum_cons} \\ 
    \partial_t E(\bx, t) + \nabla \cdot \Big( \frac{\bsfm(\bx, t)}{\rho(\bx, t)} (E(\bx,t) + P(\bx,t))\Big) &= 0, \label{eq:total_energy_cons}
\end{align}
\end{subequations}
for $m \in \intset{1}{M}$ the index for each material with $M$ denoting the number of materials (also referred to as species or components), $\bx \in \Real^d$ is the spatial coordinate where $d$ is the spatial dimension, and $t > 0$ is the time.
Without loss of generality, we assume that all $M$ materials are present, since if some materials are absent, we can simply re-index the materials over the set $\intset{1}{M'}$ for $M' < M$.
We denote $\polI_d$ to be the $d\times d$ identity matrix.
The conservative quantities are: the partial density (or conserved density), $\alpha_m\rho_m$, for material $m$, the momentum, $\bsfm$, and, $E$, the total energy.
The equilibrated pressure is denoted by $P$ and will sometimes be referred to as a generalized pressure the details are discussed in Section~\ref{sec:pte_closure}.
The total energy is the sum of the internal and kinetic energies; that is, $E = \rho e + \frac12 \rho \Vert \bv \Vert^2_{\ell^2}$, where $\rho \eqq \sum_{m=1}^M \alpha_m \rho_m$ is the total density, $e$ is the specific internal energy of the mixture (see Section~\ref{sec:pte_closure}), $\bv \eqq \bsfm/\rho$ is the velocity and $\Vert \cdot \Vert^2_{\ell^2}$ is the standard Euclidean norm ($\ell^2$-norm).
The definition of $\rho$ is justified in Assumption~\ref{ass:mix_gibbs}.
The specific internal energy can be computed from the conservative variables by $e = \sfe(\bu) \eqq \rho^{-1} (E - \frac{1}{2\rho} \Vert \bsfm \Vert^2_{\ell^2})$.

Each material density and specific internal energy can be computed through its respective equation of state.
In particular, $\rho_m = \rho_m(P, T)$ and $e_m = e_m(P,T)$ where $T$ is the equilibrated temperature.
From this, each material volume fraction is computed by $\alpha_m \eqq \frac{\alpha_m \rho_m}{\rho_m(P, T)}$.
The material mass fraction is defined by, $Y_m \eqq \frac{\alpha_m\rho_m}{\rho}$.
It is also convenient to define the vector form of these quantities; that is, $\balpha \eqq (\alpha_1, \ldots, \alpha_M)^\sfT$ and $\bY \eqq (Y_1, \ldots, Y_M)^\sfT$ with, ${}^\sfT$, denoting the transpose.
We also introduce the simplex set, $\Delta_M \eqq \{\by \in \Real^M : \sum_{m=1}^M y_m = 1\}$ since $\bY, \balpha \in \Delta_M$.
It is advantageous to work with the specific volume, which we define by, $\tau \eqq \rho^{-1}$ and similarly for each material specific volume, $\tau_m \eqq \rho_m^{-1}$.
Note the important relation 
\begin{equation}
    \rho Y_m = \alpha_m \rho_m  \quad \Leftrightarrow \quad Y_m \tau_m = \alpha_m \tau,
\end{equation}
hence $\tau = \sum_{m=1}^M Y_m \tau_m$.
Lastly, it is useful to define the ``active material'' set,
\begin{equation} \label{eq:active_material_index_set}
    I(\bY) \eqq \{ m \in \intset{1}{M} : Y_m > 0\}.
\end{equation}
Note, the indices are not re-indexed, just that a subset of indices have been selected from $\intset{1}{M}$ based on the present materials.

\begin{remark}[Density well-posedness] \label{rem:well_posed_density}
    For many equations of state; in particular cubic equations of state (see \citet{valderrama2003state}), $\rho_m(P,T)$, may be multi-valued.
    This occurs when thermodynamic stability (see Definition~\ref{def:thermo_stability}) fails.
    For the scope of this paper, we only concern ourselves with EOS for which $\rho_m(P,T)$ is well-defined.
    It is also possible to restrict ourselves to regions of the phase space where $\rho_m(P,T)$ is well-defined; however, this introduces additionally complexity beyond our current focus.
\end{remark}

\subsection{Single species thermodynamics} \label{sec:thermo}
We now proceed with a review of some necessary thermodynamics for a single species.
We drop the material, ${}_m$, subscript to simplify notation.
Much of what is discussed here can be found in: \citet{callen1998thermodynamics}, \citet{planck2013treatise}, \citet{fermi2012thermodynamics} and \citet{menikoff1989riemann}.
In order to define a complete equation of state, one can first begin with a thermodynamic potential like the Gibbs or Helmholtz free energy (assuming the underlying Legendre transform is well-defined) from which all thermodynamic information can be derived.
Similarly, if $e = e(\tau, s)$ is known, where $s$ is the specific entropy, then all thermodynamic information can be derived by using the Gibbs' identity: $T \diff s = \diff e + P \diff \tau$.
That is, the pressure and temperature are defined by $T \eqq \big( \pdv{e}{s}\big)_{\tau}$ and $P \eqq -\big( \pdv{e}{\tau}\big)_{s}$, respectively.

As we shall see in Section~\ref{sec:pte_closure}, the Gibbs free energy becomes the natural starting point when working with the pressure-temperature equilibrium closure.
Recall,
\begin{equation}
    G(P,T) = \inf_{\tau,s} \big(e(\tau,s) - Ts + P\tau\big).
\end{equation}
Given $(P,T)$, we assume the infimum exists whence $e = e(\tau(P,T), s(P,T))$ is well-defined.
We assume that $T > 0$ and $P$ belongs to some unbounded interval, $\calP$ (see Section~\ref{sec:asymp_thermo}).
We also reference the specific internal energy as $e = e(P,T)$ rather than $e = e(\tau(P,T), s(P,T))$,
since $e(P,T)$ is more natural under the PTE closure as seen in Section~\ref{sec:pte_closure}.
Then we have the following identities: $\tau(P,T) \eqq \big( \pdv{G}{P}\big)_T$ and $s(P,T) \eqq -\big( \pdv{G}{T}\big)_P$.
The \textit{coefficient of thermal expansion} is defined by $\beta \eqq \frac{1}{\tau} \big(\pdv{\tau}{T} \big)_P$.
The \textit{specific heat capacity at constant pressure} is provided by $c_p \eqq T \big( \pdv{s}{T} \big)_P$ and the \textit{isothermal compressibility factor} is $K_T \eqq -\frac{1}{\tau} \big( \pdv{\tau}{P} \big)_T$.
Assuming $K_T > 0$ and $c_p > 0$, the specific volume and specific entropy can be inverted; that is, $P = P(\tau, T)$ and $T = T(s, P)$.
We are now able to define the \textit{specific heat capacity at constant volume} by, $c_v \eqq T \partial_T s(P(\tau, T), T)$.
Proceeding, we wish to show that $P$ and $T$ can both be written as functions of $\tau$ and $s$.
As we see, the pressure is defined through the implicit equation, $P = P(\tau, T(s, P))$.
Writing $g(\tau, s, P) \eqq P - P(\tau, T(s,P))$ we shall show that $g(\tau, s, P) = 0$ can be solved for $P$; this provides the pressure in the form, $P = p(\tau, s)$.
Existence is guaranteed since $\tau$ and $s$ are assumed to be defined on the range of $\tau(P,T)$ and $s(P,T)$.
Differentiating $g$ with respect to $P$, we have, $\partial_P g = 1 - \big( \pdv{P}{T} \big)_\tau \big( \pdv{T}{P} \big)_s$.
Using the cyclic rule (Corollary~\ref{cor:cyclic_rule}), we have that $\big(\pdv{P}{T}\big)_\tau = - \big( \pdv{\tau}{T}\big)_P / \big( \pdv{\tau}{P} \big)_T = \frac{\beta}{K_T}$ and $\big( \pdv{T}{P} \big)_s = - \big( \pdv{s}{P} \big)_T \big( \pdv{T}{s} \big)_P = \frac{\tau \beta T}{c_p}$ where we have used Theorem~\ref{thm:clairaut_shwarz} on the Gibbs free energy to find $\big(\pdv{s}{P}\big)_T = -\big(\pdv{\tau}{T}\big)_P = -\tau \beta$.
Therefore, $\partial_P g = 1 - \frac{\tau \beta^2 T}{c_p K_T} > 0$ which is guaranteed by thermodynamic stability (Definition~\ref{def:thermo_stability}).
Hence, the pressure is defined uniquely as $P = p(\tau, s)$ since $g$ is a strictly monotonic function of $P$; similarly, we als have that $T = \theta(\tau, s)$ (we use $p$ and $\theta$ to avoid abuse of notation).
From this, the \textit{isentropic bulk modulus} is given by $B_s \eqq -\tau \big(\pdv{p}{\tau}\big)_s$.
Furthermore the \textit{isentropic compressibility} and \textit{isothermal bulk modulus} are given by $K_s \eqq 1 / B_s$ and $B_T \eqq 1 / K_T$, respectively.
We now note that the (isentropic) sound speed is defined by $c = \sqrt{B_s / \rho}$, hence for the Euler equations to be hyperbolic, we require that $B_s > 0$.
This ties directly into the notion of thermodynamic stability.
\begin{definition}[Thermodynamic stability] \label{def:thermo_stability}
    An equation of state is said to be thermodynamically stable (\citet[Sec II. C.]{menikoff1989riemann}) if 
    \begin{equation}
        c_v^{-1} \geq c_p^{-1} \geq 0 \quad \text{ and } \quad K_s^{-1} \geq K_T^{-1} \geq 0.
    \end{equation}
    This is equivalent to $e(v,s)$ being jointly convex.
\end{definition}

\begin{remark}[Unstable EOS]
For many equations of state, there exists a region in phase space where $B_T < 0$; e.g. cubic equations of state like the van der Waals EOS (see \citet{valderrama2003state}).
This non-physical region is often attributed to phase change of the material (e.g. vaporization of water; see \citet[Part 1.~Chpt 1.]{planck2013treatise}) but can also occur when a solid expands to a vacuum state (see~\citet{fecht1990thermodynamic}).
To avoid such complexities with phase change and unstable EOS, we restrict ourselves to EOS which satisfy the stability criteria in Definition~\ref{def:thermo_stability}.
As described in Remark~\ref{rem:well_posed_density}, this assumption guarantees that $\rho(P,T)$ is well-defined.
\end{remark}

\subsection{Thermodynamic asymptotics} \label{sec:asymp_thermo}
We require some information regarding the asymptotic behavior of our equation of state.
These asymptotic properties are typical of many EOS.
We make the following assumptions:
\begin{equation} \label{eq:asymptotics}
    \lim_{\tau \to 0^+} P(\tau, T) = +\infty \quad \text{ and } \quad  \lim_{T \to \infty} e(P(\tau, T), T) = +\infty.
\end{equation}
That is, the pressure tends to infinity under infinite isothermal compression and the energy tends to infinity under isochoric heating.

We also require the notion of a minimum pressure.
Let $\calP$ be an interval denoting the admissible range of pressure values.
The infimium of the pressure is given by,
\begin{equation}
    \inf(\calP) = \inf_{\substack{T > 0 \\ \tau > 0 }} P(\tau, T).
\end{equation}
In general, real equations of state satisfy $|\inf(\calP)| < \infty$.
For an ideal gas, $\inf(\calP) = 0$, and for a stiffened gas, $\inf(\calP) = -P_\infty$.
Furthermore, under the assumption of thermodynamic stability (Definition~\ref{def:thermo_stability}), the minimum pressure must occur as $\tau \to \infty$.

\subsection{Mixture thermodynamics} \label{sec:pte_closure}
We are now in a position to discuss the closure of the system \eqref{eq:mass_cons}--\eqref{eq:total_energy_cons}.
To close this system, a pressure-temperature equilibrium assumption is imposed (see \citet[Sec.~3]{grove2010pressure}).
Specifically, one assumes that there exists some $P \in \Real$ and $T \in \Real_+$, such that
\begin{equation} \label{eq:pte_assumption}
    P = P_m(\tau_m, T_m) \quad \text{ and } \quad T = T_m(s_m, P_m),
\end{equation}
for all $m \in \intset{1}{M}$.
(The subscript $m$ is used to denote a specific material quantity.)
In fact, under appropriate assumptions, the existence and uniqueness of such a solution will be proven in Theorem~\ref{thm:existence}.
\begin{remark}[Single Material PTE]
    If $\alpha_k \rho_k = \rho$ for some $k \in \intset{1}{M}$; i.e., $\alpha_i\rho_i = 0$ for all $i \in \intset{1}{M} \setminus\{k\}$, then the PTE system holds trivially.
    The pressure and temperature are then defined by their respective EOS.
\end{remark}
Note however, that the equilibrated pressure, $P$, must be valid for each equation of state in the mixture; that is, we require $P \in \calP_m$ for each material, $m$, present in the mixture, where $\calP_m$ is the interval of admissible pressure values for material $m$ as described in Section~\ref{sec:asymp_thermo}.
In particular, we require,
\begin{equation} \label{eq:pressure_domain}
    P \in \calP(\bY) \eqq \bigcap_{m\in I(\bY)} \calP_m.
\end{equation}
for any mixture with representative mass fractions, $\bY$.

\begin{remark}[Solid-gas mixture]
    For solids, it is possible for the pressure to be negative; that is, the material enters into \textit{tension}.
    However, gases, by their very nature, cannot undergo tension.
    So when an equation of state for a solid is mixed with an equation of state for a gas, the PTE model necessarily requires that $P > 0$.
\end{remark}

In order to complete the thermodynamic picture, we must make an additional assumption regarding mixtures.
This assumption is the definition of the mixture Gibbs free energy; see \citet[Sec.~3.2]{grove2010pressure}.
\begin{assumption}[mixture Gibbs free energy] \label{ass:mix_gibbs}
    We assume that the mixture is a linear mixture rule without mixing contributions; that is, components of the mixture do not interact. 
    In particular, we assume that the mixture Gibbs free energy is ``mass averaged''; that is,
    \begin{equation} \label{eq:mixture_gibbs}
        G(P, T, \bY) \eqq \sum_{m=1}^M Y_m G_m(P,T),
    \end{equation}
    where $G_m(P,T) \eqq e_m(P,T) + P\tau_m(P,T) - T s_m(P,T)$.
\end{assumption}

Under Assumption~\ref{ass:mix_gibbs} we can derive the remaining thermodynamics of this mixture model. 
We have the known relations $\tau_m \eqq \big(\pdv{G_m}{P}\big)_T$ and $s_m \eqq -\big(\pdv{G_m}{T}\big)_P$.
Therefore, the mixture specific entropy is $s \eqq -\big( \pdv{G}{T}\big)_P = \sum_{m=1}^M Y_m s_m$ and the mixture specific volume is $\tau \eqq \big( \pdv{G}{P} \big)_T = \sum_{m=1}^M Y_m \tau_m$.
Lastly, the mixture specific internal energy is provided by $e = G - P\tau + Ts = \sum_{m=1}^M Y_m e_m$.
\begin{remark}[Entropy of mixing]
    For real gas mixtures, the entropy increases because of the mixing.
    This process is referred to as the ``entropy of mixing''.
    In particular, there exists some function $s_{\text{mix}}(\bY)$ such that $s \eqq \sum_{m=1}^M Y_m s_m(P,T) + s_{\text{mix}}(\bY)$.
    We have no such additional entropy in this model and therefore it can be thought of as a mixture of immiscible materials; e.g. sand and gravel or steel and air.
\end{remark}
We now provide some mixture identities for the thermodynamic derivatives.
\begin{lemma}[Thermodynamic mixture derivatives]
    Under the PTE assumption \eqref{eq:pte_assumption}, the following mixture derivative identities hold: $c_p = \sum_{m=1}^M Y_m c_{p,m}$,  $\beta = \sum_{m=1}^M \alpha_m \beta_m$ and $K_T = \sum_{m=1}^M \alpha_m K_{T,m}$.
\end{lemma}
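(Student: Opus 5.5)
The plan is to differentiate the mixture relations $s = \sum_m Y_m s_m(P,T)$ and $\tau = \sum_m Y_m \tau_m(P,T)$, which follow from Assumption~\ref{ass:mix_gibbs}, while holding the mass fractions $\bY$ fixed. Under the PTE assumption \eqref{eq:pte_assumption}, every material is evaluated at the same pair $(P,T)$. This means a partial derivative of a mixture quantity at constant $P$, or at constant $T$, passes straight through the sum onto each material function, with no chain-rule cross terms. The mixture derivatives are defined exactly as in Section~\ref{sec:thermo}, with $(P,T)$ as the natural variables of the mixture Gibbs free energy $G(P,T,\bY)$.

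For the heat capacity, I will compute
\[
c_p = T\Big(\pdv{s}{T}\Big)_P = \sum_{m=1}^M Y_m\, T\Big(\pdv{s_m}{T}\Big)_P = \sum_{m=1}^M Y_m c_{p,m},
\]
which is immediate.

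For the thermal expansion coefficient, I will write
\[
\beta = \frac{1}{\tau}\Big(\pdv{\tau}{T}\Big)_P = \frac{1}{\tau}\sum_{m=1}^M Y_m \tau_m \beta_m ,
\]
using $(\partial_T \tau_m)_P = \tau_m\beta_m$. I then apply the key identity $Y_m\tau_m = \alpha_m\tau$ stated earlier in the section. This turns the weights $Y_m\tau_m/\tau$ into $\alpha_m$, giving $\beta=\sum_m\alpha_m\beta_m$.

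The compressibility is handled the same way. Using $(\partial_P\tau_m)_T = -\tau_m K_{T,m}$, I get
\[
K_T = -\frac{1}{\tau}\Big(\pdv{\tau}{P}\Big)_T = \frac{1}{\tau}\sum_{m=1}^M Y_m\tau_m K_{T,m} = \sum_{m=1}^M \alpha_m K_{T,m}.
\]

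There is no substantive obstacle here. The only point needing care is to justify that the volume fractions $\alpha_m = Y_m\tau_m(P,T)/\tau$ are the weights evaluated at the equilibrium state, and that they are not differentiated. This holds because $\bY$ is held fixed and the identity $Y_m\tau_m=\alpha_m\tau$ is applied only after differentiation. I will also note that materials with $Y_m=0$ contribute nothing, so the sums may be restricted to $I(\bY)$ when some $\tau_m$ is undefined.
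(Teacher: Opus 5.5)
Your proposal is correct and follows essentially the same route as the paper: differentiate $s=\sum_m Y_m s_m$ and $\tau=\sum_m Y_m\tau_m$ at fixed $(P,\bY)$ or $(T,\bY)$, then convert the weights using $Y_m\tau_m=\alpha_m\tau$ (the paper writes this as $\rho Y_m=\alpha_m/\tau_m$). Your explicit remark that the volume fractions are applied only after differentiation, and are never differentiated themselves, is a useful clarification that the paper leaves implicit.
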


\begin{proof}
    For the specific heat capacity at constant pressure, the result follows immediately since, $c_p = T \big( \pdv{s}{T} \big)_{P, \bY} = \sum_{m=1}^M Y_m T \big(\pdv{s_m}{T}\big)_{P} = \sum_{m=1}^M Y_m c_{p,m}$.
    Next consider, $K_T = -\rho \big(\pdv{\tau}{P}\big)_{T, \bY} = -\sum_{m=1}^M \rho Y_m \big(\pdv{\tau_m}{P}\big)_T = \sum_{m=1}^M -\frac{\alpha_m}{\tau_m} \big(\pdv{\tau_m}{P}\big)_T = \sum_{m=1}^M \alpha_m K_{T,m}$.
    The same reasoning can be applied for $\beta = \sum_{m=1}^M \alpha_m \beta_m$.
\end{proof}
The relations for the mixture isentropic bulk modulus, $B_s$, and specific heat capacity at constant volume, $c_v$, are not easily expressed in terms of a simple additive mixture.
Instead, one applies the following thermodynamic identity to recover them:
\begin{equation} \label{eq:important_thermo_iden}
    \frac{K_s}{K_T} = 1 - \frac{\beta^2 \tau T}{c_p K_T} = \frac{c_v}{c_p},
\end{equation}
(see \cite[Eq.~A14]{menikoff1989riemann}).
In this way, we see that $K_T$, $c_p$, and $\beta$ must first be computed before $c_v$ and $K_s$ (or $B_s$) can be computed.
We also provide the mixture Gr{\"u}neisen coefficient (\cite[Eq.~A16]{menikoff1989riemann}) for reference:
\begin{equation}
    \Gamma = \frac{\beta \tau}{c_v K_T}.
\end{equation}
\begin{remark}[Negative Gr{\"u}neisen coefficient] \label{rem:neg_gru_coeff}
    Unlike the positivity we demand in Definition~\ref{def:thermo_stability}, $\Gamma$ (and equivalently $\beta$) need not be positive.
    This often leads to strange behavior in the material; the most common example is water near 4${}^{\circ}$C, which contracts upon heating (see \citet{bethe1998theory}).
    There are other more exotic materials that can have a negative thermal expansion as well, see \citep{gava2012first, zwanziger2007phonon}.
\end{remark}
\begin{remark}[Maximum entropy principle]
    Fundamentally, the pressure temperature equilibrium can be deduced from the notion of the \textit{maximum entropy principle} when constrained under an energy and volume constraint. 
    The maximum entropy principle states that entropy must be maximized in an equilibrated system under the imposed constraints.
    In particular, this involves solving the constrained optimization problem:
    \begin{equation}
        s = \sup_{\substack{\sum Y_m\tau_m = \tau \\ \sum Y_m e_m = e }} \sum_{m=1}^M Y_m s_m(\tau_m, e_m).
    \end{equation}
    Using Lagrange multipliers, one finds that pressure and temperature must be in equilibrium.
    A similar result can be shown for the \textit{minimum energy principle}, and a similar idea is used in the proof of Proposition~\ref{prop:concave_energy_constraint}.
    For more information regarding these principles see \citet[Chapters~1 \& 2]{callen1998thermodynamics} as well as \citet[Chapter IV]{fermi2012thermodynamics}.
\end{remark}
\begin{remark}[Confusion with the HEM]
    The Euler system \eqref{eq:mass_cons}--\eqref{eq:total_energy_cons} with the assumption of pressure-temperature equilibrium and \eqref{eq:mixture_gibbs} should not be confused with the \textit{homogeneous equilibrium model} (HEM) often used in multiphase flows.
    Many papers often do not report on the Gibbs free energy; whether material Gibbs free energies are in equilibrium or if a mixture Gibbs free energy is used.
    Without a clear assumption or statement, the complete thermodynamic picture becomes unclear.
\end{remark}

With the thermodynamic mixture quantities in place, we establish the system of equations to solve for the equilibrated pressure and temperature.
Since we are interested in solving the four-equation model, one can readily compute $\bY$, $\tau$, and $e$ from the conservative state, $\bu$, through an algebraic calculation.
We then propose to solve the following system for $P$ and $T$:
\begin{equation} \label{eq:mass_frac_pte_system}
    \sum_{m=1}^M Y_m \tau_m(P,T) = \tau \quad \text{ and } \quad \sum_{m=1}^M Y_m e_m(P,T) = e.
\end{equation}
In contrast, solving \eqref{eq:pte_assumption} requires $2(M-1)$ equations, whereas \eqref{eq:mass_frac_pte_system} only requires solving two equations.
We close this section with several remarks and lemmas regarding the mixture thermodynamics.

\begin{lemma}[Generalized pressure] \label{lem:generalized_pressure}
    Assume each equation of state satisfies Definition~\ref{def:thermo_stability}, and that the following asymptotic properties hold for the mixture: $\lim_{P\to \inf(\calP)^+} \sum_{m=1}^M Y_m \tau_m(P,T) = \infty$ and $\lim_{P\to\infty} \sum_{m=1}^M Y_m \tau_m(P,T) = 0$ for all $T > 0$.
    Then the generalized pressure, $P$, is defined to be the root the equation $\sum_{m=1}^M Y_m\tau_m(P,T) = \tau$ and is well-defined for all $T > 0$ and $\tau > 0$.
\end{lemma}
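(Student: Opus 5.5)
Fix $T > 0$ and $\tau > 0$. We will show that the map $f_T(P) \eqq \sum_{m=1}^M Y_m \tau_m(P,T)$ is a continuous, strictly decreasing bijection from the open interval $(\inf(\calP), \infty)$ onto $(0,\infty)$. Here $\calP$ is read as $\calP(\bY)$ from \eqref{eq:pressure_domain}, so that every active material is defined there. Existence and uniqueness of a root of $f_T(P) = \tau$ then follow directly from the intermediate value theorem together with strict monotonicity. We restrict the sum to $I(\bY)$, since materials with $Y_m = 0$ contribute nothing.

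\textbf{Monotonicity.} By Definition~\ref{def:thermo_stability}, each active material satisfies $K_{T,m}^{-1} \geq 0$. We need strict positivity, $0 < K_{T,m} < \infty$. This is the standing assumption used in Section~\ref{sec:thermo} to invert $\tau_m(P,T)$ into $P_m(\tau,T)$. Then
\begin{equation*}
  \Big(\pdv{\tau_m}{P}\Big)_T = -\tau_m K_{T,m} < 0 .
\end{equation*}
Differentiating $f_T$ with respect to $P$ and using the mixture identity $K_T = \sum_m \alpha_m K_{T,m}$ from the preceding lemma gives
\begin{equation*}
  f_T'(P) = -\sum_{m\in I(\bY)} Y_m \tau_m K_{T,m} = -\tau K_T < 0,
\end{equation*}
because $\tau > 0$ and $\alpha_m \geq 0$ with at least one active $\alpha_m > 0$. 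Hence $f_T$ is strictly decreasing on $\calP(\bY)$, and any root is unique. Continuity of $f_T$ follows from differentiability of each $G_m$, whose first derivative is $\tau_m$.

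\textbf{Range.} The assumed limits give $f_T(P) \to \infty$ as $P \to \inf(\calP)^+$ and $f_T(P) \to 0$ as $P \to \infty$. For a given $\tau > 0$ we can therefore choose $P_1$ near $\inf(\calP)$ with $f_T(P_1) > \tau$, and $P_2$ large with $f_T(P_2) < \tau$. The intermediate value theorem then yields $P \in (P_1,P_2) \subset \calP(\bY)$ with $f_T(P) = \tau$. Since $T > 0$ and $\tau > 0$ were arbitrary, the generalized pressure $P = P(\tau,T)$ is well-defined on all of $\Real_+ \times \Real_+$. The implicit function theorem, applied with $f_T' \neq 0$, moreover gives a smooth dependence on $(\tau,T)$.

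\textbf{Main obstacle.} The step needing care is the domain of $P$. The limit hypothesis refers to $\inf(\calP)$, but $f_T$ is only defined where all active materials admit the pressure, namely on $\calP(\bY) = \bigcap_{m\in I(\bY)} \calP_m$. Since the $\calP_m$ are unbounded intervals, this intersection is an interval of the form $(\max_m \inf\calP_m, \infty)$, so $f_T$ is defined on a connected set and the argument goes through. We interpret the hypothesis with $\inf(\calP) = \inf(\calP(\bY))$. We should also remark that strictness $K_{T,m} > 0$ is genuinely needed: if $K_{T,m} = 0$ for every active material on some interval, $f_T$ would be flat there and uniqueness would fail. We would state this explicitly as part of the stability assumption.
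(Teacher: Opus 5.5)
Your proposal is correct and follows the paper's proof. The paper likewise computes $\partial_P \sum_m Y_m\tau_m = -\tau K_T < 0$ and then uses the two asymptotic limits to place $\tau$ in the range, so the equation inverts uniquely. Your extra care (working on $\calP(\bY)$, invoking the intermediate value theorem explicitly, and requiring $K_{T,m}>0$ strictly, which the paper takes as a standing assumption in Section~\ref{sec:thermo}) makes explicit what the paper leaves implicit.
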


\begin{proof}
    Differentiating $\sum_{m=1}^M Y_m \tau_m(P, T)$ with respect to $P$; we find,
    \begin{equation}
        \sum_{m=1}^M Y_m \partial_P \tau_m(P,T) = \sum_{m=1}^M -Y_m \tau_m(P,T) K_{T, m}(P,T) = -\tau K_T < 0,
    \end{equation}
    for all $T > 0$ and $\tau > 0$.
    Therefore, the specific volume equation can be inverted to find the generalized pressure, $P = P(\tau, T, \bY)$, since $\tau$ is in the range of $\sum_{m=1}^M Y_m \tau_m(P,T)$ from the asymptotic assumptions.
\end{proof}

Note that finding the generalized temperature is not as straightforward as in Lemma~\ref{lem:generalized_pressure}, but is established in the proof of Theorem~\ref{thm:existence}.

\begin{lemma}[The Gibbs' identity] \label{lem:gibbs_identity}
Assuming the Gibbs' identity holds for each individual material, then the mixture Gibbs' identity is given by
\begin{equation} \label{eq:mix_gibbs_identity}
    T \diff s = \diff e + P \diff \tau - \sum_{m=1}^M G_m \diff Y_m.
\end{equation}
\end{lemma}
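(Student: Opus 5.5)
The plan is to start from the single-material Gibbs' identity $T\diff s_m = \diff e_m + P\diff\tau_m$, which holds for each $m$ by hypothesis. Under the PTE assumption \eqref{eq:pte_assumption}, every material shares the common values $P$ and $T$, so no material subscripts appear on them. The mixture quantities from Assumption~\ref{ass:mix_gibbs} are $s=\sum_m Y_m s_m$, $e=\sum_m Y_m e_m$ and $\tau=\sum_m Y_m\tau_m$. Their differentials are taken with the mass fractions allowed to vary, because \eqref{eq:mix_gibbs_identity} explicitly carries $\diff Y_m$ terms.

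The steps, in order, are as follows.
\begin{enumerate}
\item Apply the product rule to each mixture quantity:
\begin{equation*}
\diff s=\sum_m \big(Y_m\diff s_m + s_m\diff Y_m\big),\qquad
\diff e=\sum_m \big(Y_m\diff e_m + e_m\diff Y_m\big),\qquad
\diff \tau=\sum_m \big(Y_m\diff \tau_m + \tau_m\diff Y_m\big).
\end{equation*}
\item Multiply the first relation by $T$ and substitute the single-material identity into the terms $T\,Y_m\diff s_m$:
\begin{equation*}
T\diff s=\sum_m Y_m\big(\diff e_m+P\diff\tau_m\big)+\sum_m T s_m\diff Y_m .
\end{equation*}
\item Eliminate $\sum_m Y_m\diff e_m$ and $\sum_m Y_m\diff\tau_m$ using the expressions for $\diff e$ and $\diff\tau$ from the first step:
\begin{equation*}
T\diff s=\diff e+P\diff\tau-\sum_m\big(e_m+P\tau_m-Ts_m\big)\diff Y_m .
\end{equation*}
\item Recognize the bracket as $G_m(P,T)$ from Assumption~\ref{ass:mix_gibbs}, which gives \eqref{eq:mix_gibbs_identity}.
\end{enumerate}

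An alternative check is to differentiate $G(P,T,\bY)=\sum_m Y_mG_m$ to get $\diff G=\tau\diff P - s\diff T+\sum_m G_m\diff Y_m$. Then apply the Legendre relation $e=G-P\tau+Ts$, and the same identity follows.

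The main obstacle is justifying that $P$ and $T$ can be pulled out as common factors. This is exactly the content of the PTE assumption: all materials are evaluated at the same $(P,T)$. Without it, the terms $P_m\diff\tau_m$ would not combine into $P\diff\tau$.

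A secondary subtlety is the simplex constraint $\sum_m Y_m=1$. It makes the $\diff Y_m$ not independent, since $\sum_m\diff Y_m=0$, but the identity holds as written regardless. Note also that $G_m$ is defined only up to an additive constant common to all $m$, and such a constant drops out of the sum because $\sum_m \diff Y_m = 0$.
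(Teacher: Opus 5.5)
Your proof is correct and is essentially the paper's argument: mass-average the single-material identities $T\diff s_m = \diff e_m + P\diff\tau_m$ at the common PTE values of $P$ and $T$, then use the product rule $\sum_m Y_m \diff X_m = \diff\big(\sum_m Y_m X_m\big) - \sum_m X_m \diff Y_m$ to pass to mixture differentials and collect $e_m + P\tau_m - Ts_m = G_m$ as the coefficient of $\diff Y_m$. One small correction to your closing aside: each material's reference energy and entropy can be shifted independently, $G_m \mapsto G_m + a_m - b_m T$, and the identity still holds because $e$ and $s$ shift along with $G_m$, not because the ambiguity is a single constant common to all $m$.
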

\begin{proof}
    We start by taking the mass fraction average of each single material Gibbs' identity,
    \begin{equation*}
        \sum_{m=1}^M T Y_m \diff s_m = \sum_{m=1}^M \big(Y_m \diff e_m + P \diff \tau_m \big).
    \end{equation*}
    Then using the identity, $\sum_{m=1}^M X_m \diff y_m = \diff \big(\sum_{m=1}^M X_m y_m \big) - \sum_{m=1}^M y_m \diff X_m$ for some arbitrary quantities $\{X_m\}$ and $\{y_m\}$, we arrive at the result $T\diff s = \diff e + P \diff \tau - \sum_{m=1}^M (e_m + P\tau_m - T s_m) \diff Y_m$.
    This completes the proof.
\end{proof}

\begin{remark}[Reduction to a single EOS]
    When the mass fractions are held constant, the mixture EOS can be treated as a single material EOS.
    This can be seen with the mixture Gibbs' identity \eqref{eq:mix_gibbs_identity} in Lemma~\ref{lem:gibbs_identity} which reduces to the single material case with $\diff Y_m = 0$ for each $m \in \intset{1}{M}$.
    Hence the mass fractions can be thought of as simply equation of state parameters.
    This allows us to apply many of the thermodynamic identities provided in \citet[Appendix~A]{menikoff1989riemann}.
\end{remark}

\subsection{The admissible set} \label{sec:admissible_set}
When solving \eqref{eq:mass_cons}--\eqref{eq:total_energy_cons}, the initial state as well the solution (for all $t > 0$), must belong to a so-called \textit{admissible set}.
For the Euler equations, the global admissible set is the collection of states for which $\rho > 0$ and $T > 0$.
As one often works with the specific internal energy in the Euler equations, the form of the admissible set may vary when expressed in terms of $\sfe(\bu)$ and is dependent on the equation of state.
For example, the admissible set for (single material) ideal gas law is $\calA_{\text{ideal}} \eqq \{\bu \in \Real^{d+2} : \rho > 0, \, \sfe(\bu) > 0\}$ since $e = c_v T$.
Whereas for the Nobel-Abel stiffened gas law (NASG) (\citet{le2016noble}), $P(\tau, e) = (\gamma-1) \frac{e-q}{\tau - b} - \gamma p_\infty$, the admissible set is, $\calA_{\text{NASG}} \eqq \{ \bu \in \Real^{d+2} : 0 < \rho^{-1} < b, \, \sfe(\bu) - q > p_\infty(\rho^{-1} - b)\}$ since $e - q - p_\infty (\tau - b) = c_v T$.
Note the NASG EOS has the added constraint of a maximal density, $b^{-1}$, for $b > 0$.
However, we omit any EOS with a maximum density for simplicity.
For an ideal gas \textit{mixture}, much of the complexity with PTE is simplified (see \citep[Sec.~2]{CLAYTON2026107159}).
In which case the admissible set is given by $\calA_{\text{ideal mix}} \eqq \{ \bu \in \Real^{M+d+1} : \rho(\bu) > 0, \alpha_k\rho_k \geq 0, \forall k \in \intset{1}{M}, \sfe(\bu) > 0\}$.

Each of these admissible sets can be characterized under one admissible set by introducing the zero temperature isotherm,
\begin{equation} \label{eq:zero_temp_isotherm}
    e_{\text{zero}}(\tau) \eqq \lim_{T \to 0^+} e(P(\tau, T), T).
\end{equation}
As we shall see, $e_{\text{zero}}(\tau)$ is the minimal curve in the $\tau$-$e$ space which bounds (from below) all admissible specific internal energy states; see Lemma~\ref{lem:zero_temp_admissible_set}.

\begin{remark}[3rd law of thermodynamics] \label{rem:3rd_law}
    The third law of thermodynamics as described by \citet[Chapter VI, \S 282]{planck2013treatise}, says that the entropy must approach a finite value as the temperature approaches zero and is independent of the pressure. 
    This finite value can, without loss of generality, be taken as zero.
    Applying this law to the specific internal energy, we arrive at the definition of the cold curve:
    \begin{equation}
        e_{\textup{cold}}(\tau) \eqq \lim_{s \to 0^+} \frake(\tau,  \theta(\tau, s)) = \lim_{T \to 0^+} \frake(\tau, T),
    \end{equation}
    where $\frake(\tau, T) \eqq e(P(\tau, T), T)$.
    The 3rd law of thermodynamics is sometimes referred to as \textit{Nernst's theorem}; however, Nernst's theorem is described by the vanishing of differences in entropy near absolute zero.
    
    Note, not every equation of state satisfies the 3rd law of thermodynamics; most notably, the ideal and stiffened gas EOS. 
    The failure occurs in the sense that, the zero temperature isotherm requires $s \to -\infty$, see \citet[Remark~2.7 \& 2.9]{clayton2026preserving}.
    In order to remain as general as possible, we work with the zero temperature curve rather than the cold curve.
    However, some cases require special considerations; e.g. Proposition~\ref{prop:concave_energy_constraint}.
\end{remark}

\begin{lemma}[Positive temperature admissible set] \label{lem:zero_temp_admissible_set}
    Define
    \begin{subequations}
    \begin{align}
        \calA_{T} &\eqq \{ \bu \in \Real^{2+d} : \rho > 0, \, \sfT(\bu) > 0 \}, \\
        \calA_{\textup{zero}} &\eqq \{ \bu \in \Real^{2+d} : \rho > 0, \, \sfe(\bu) > e_{\textup{zero}}(\rho^{-1}) \}.
    \end{align}
    \end{subequations}
    If $\sfc_v(\bu) \eqq \big(\pdv{\frake}{T}\big)_\tau = c_v(\rho^{-1}, \sfT(\bu)) > 0$ for all $\bu \in \calA_T \cup \calA_{\textup{zero}}$ where $\sfT(\bu)$ is the temperature as a function of the state variable $\bu$.
    Then $\calA_T = \calA_{\textup{zero}}$.
\end{lemma}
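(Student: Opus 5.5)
The plan is to reduce everything to a one-dimensional monotonicity statement along an isochore. Fix $\tau = \rho^{-1} > 0$ and consider the map $T \mapsto \frake(\tau, T) = e(P(\tau,T),T)$ on $(0,\infty)$. Since $\bu$ determines $\rho$ and $\sfe(\bu)$, membership in either set depends only on the pair $(\tau, \sfe(\bu))$. The condition $\sfT(\bu) > 0$ means there is a $T>0$ with $\frake(\tau,T) = \sfe(\bu)$. So the claim reduces to the following: for each fixed $\tau>0$, the range of $T\mapsto \frake(\tau,T)$ on $(0,\infty)$ is exactly the open interval $(e_{\text{zero}}(\tau), \infty)$.

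The key step is strict monotonicity. By hypothesis, $\big(\pdv{\frake}{T}\big)_\tau = c_v > 0$ at the relevant states, so $\frake(\tau,\cdot)$ is strictly increasing in $T$. Here $P(\tau,T)$ is well-defined by the inversion discussed in Section~\ref{sec:thermo} under $K_T>0$, and $\frake$ is differentiable in $T$. Strict monotonicity gives three consequences:
\begin{itemize}
\item the limit $e_{\text{zero}}(\tau) = \lim_{T\to0^+}\frake(\tau,T)$ in \eqref{eq:zero_temp_isotherm} exists in $[-\infty,\infty)$ and equals $\inf_{T>0}\frake(\tau,T)$;
\item this infimum is not attained, so $\frake(\tau,T) > e_{\text{zero}}(\tau)$ for every $T>0$;
\item by the asymptotic assumption \eqref{eq:asymptotics}, $\frake(\tau,T)\to+\infty$ as $T\to\infty$.
\end{itemize}
Continuity of $\frake(\tau,\cdot)$ together with the intermediate value theorem then shows that the range is exactly $(e_{\text{zero}}(\tau),\infty)$, and that each value is attained at a unique $T$. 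In particular, $\sfT(\bu)$ is well-defined on this set.

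Both inclusions now follow. For $\calA_T \subseteq \calA_{\text{zero}}$: if $\bu\in\calA_T$, then $\sfe(\bu) = \frake(\rho^{-1},\sfT(\bu))$ with $\sfT(\bu)>0$, and the strict inequality above gives $\sfe(\bu) > e_{\text{zero}}(\rho^{-1})$. For $\calA_{\text{zero}} \subseteq \calA_T$: given $\sfe(\bu) > e_{\text{zero}}(\rho^{-1})$, the range statement produces a unique $T>0$ with $\frake(\rho^{-1},T)=\sfe(\bu)$, and we set $\sfT(\bu)=T>0$.

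The main obstacle is the slightly circular phrasing of the positivity hypothesis. It is stated at states $\bu \in \calA_T\cup\calA_{\text{zero}}$, that is, at points where the temperature is already defined, rather than for all $T>0$ along the isochore. I would handle this by noting that every point $(\tau,T)$ with $T>0$ corresponds to a state in $\calA_T$: take $\rho=\tau^{-1}$, any momentum, and energy $\frake(\tau,T)$. The hypothesis therefore gives $c_v(\tau,T)>0$ on the whole half-line $T>0$. A secondary subtlety is when $e_{\text{zero}}(\tau) = -\infty$, as for some gases. In that case the argument still goes through, with $\calA_{\text{zero}}$ imposing no lower bound.
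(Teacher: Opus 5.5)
Your proof is correct, and its core mechanism is the paper's. The positivity of $c_v$ makes $T\mapsto\frake(\tau,T)$ strictly increasing on each isochore, so $e_{\textup{zero}}(\tau)$ is an infimum that is never attained, and this gives $\calA_T\subset\calA_{\textup{zero}}$.

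Where you differ is the reverse inclusion. The paper's proof announces $\calA_{\textup{zero}}\subset\calA_T$ in its first half, but the inference it writes there (``$T_0>0$ implies $e_0>e_{\textup{zero}}(\tau_0)$'') again proves $\calA_T\subset\calA_{\textup{zero}}$. It never produces a positive temperature for a state with $e_0>e_{\textup{zero}}(\tau_0)$.

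Your range statement supplies exactly this missing step: continuity of $\frake(\tau,\cdot)$, the standing assumption $\lim_{T\to\infty}\frake(\tau,T)=+\infty$ from \eqref{eq:asymptotics}, and the intermediate value theorem. The step is genuinely needed. Without the upper asymptotic, energies at or above $\sup_{T>0}\frake(\tau,T)$ would lie in $\calA_{\textup{zero}}$ but not in $\calA_T$.

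Your observation that every $(\tau,T)$ with $T>0$ is realized by a state in $\calA_T$ also makes explicit a step the paper passes over with ``from the hypothesis.'' That observation is what lets the $c_v$ hypothesis cover the whole isochore. It tacitly uses that $\sfT$ is single-valued, so that $\sfT(\bu)=T$ when $\sfe(\bu)=\frake(\tau,T)$, but the statement itself presupposes this.

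So your version cites one more standing assumption than the paper's write-up does, and in exchange it actually establishes both inclusions rather than one.
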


\begin{proof}
    For $\bu_0 \in \Real^{2+d}$, define $\tau_0 \eqq \rho^{-1}(\bu_0)$, $T_0 \eqq \sfT(\bu_0)$, and $e_0 \eqq \sfe(\bu_0)$.
    Now, assume that $\bu_0 \in \calA_{\text{zero}}$.
    From the hypothesis we have that $\partial_T \frake(\tau, T) > 0$, this implies that for any fixed $\tau > 0$, $e$ is an increasing function of $T$.
    Hence $e_{\text{zero}}(\tau_0) = \lim_{T \to 0^+} e(\tau_0, T) = \inf_{T > 0} e(\tau_0, T)$, and therefore, $T_0 > 0$ implies $e_0 > e_{\text{zero}}(\tau_0)$ which implies that $\calA_{\text{zero}} \subset \calA_T$.
    For the reverse direction, let $\bu_0 \in \calA_T$.
    Again, since $\sfc_v(\bu_0) > 0$, 
    we know that $\frake(\tau, T)$ is an increasing function of $T$ and therefore, $e_0 = \frake(\tau_0, T_0) > \lim_{T \to 0^+} \frake(\tau_0, T) = e_{\text{zero}}(\tau_0)$.
    Therefore $e_0 > e_{\text{zero}}(\tau_0)$ and thus $\calA_T = \calA_{\text{zero}}$.
\end{proof}

From Lemma~\ref{lem:zero_temp_admissible_set}, it becomes apparent that that $\calA_T$ is a convex admissible set (assuming $e_{\text{zero}}(\tau)$ is a convex function) owing to \citep[Proposition~2.8]{clayton2026preserving}.
Such a convex admissible set is often referred to as an invariant domain and physically correct approximations of the solution are expected to reside there, (see \citep{chueh1977positively, frid2001maps, guermond2016invariant}).

For the PTE closure, the admissible set is not easily realized and requires a bit more formalization.
We first define the \textit{generalized pressure zero curve}.
\begin{definition}[Generalize pressure zero curve] \label{def:P_zero_curve}
    Define,
    \begin{equation}
        \tau_{\textup{zero}}(P, \bY) \eqq \sum_{m \in I(\bY)} Y_m \tau_{m,\textup{zero}}(P), \quad \text{ for } P \in \calP(\bY).
    \end{equation}
    where $\tau_{m,\textup{zero}}(P) \eqq \lim_{T \to 0^+} \tau_m(P, T)$.
    If $\tau_{\textup{zero}}(P, \bY)$ is a one-to-one function of $P \in \calP(\bY)$, then $P_{\textup{zero}}(\tau, \bY)$ is defined to be the inverse of $\tau_{\textup{zero}}(P, \bY)$ and is referred to as the generalized pressure zero curve.
\end{definition}

\begin{remark}[Multi-valued pressure cold curves] \label{rem:multi_valued_cold}
    As there is a large variety of equations of state, we would like to emphasize the assumption of one-to-one in Definition~\ref{def:P_zero_curve}.
    Consider an ideal gas mixture of two materials.
    The pressure laws are given by, $P\tau_1 = R_1 T$ and $P\tau_2 = R_2 T$.
    Taking the limit as $T \to 0^+$, we see that $P\tau_1 = 0$ and $P\tau_2 = 0$.
    Hence $\tau_{1,\text{zero}}(P) = \tau_{2,\text{zero}}(P) = 0$ for all $P > 0$, and therefore, the inverse of $\tau_{\text{zero}}(P)$, is the set $(0,\infty)$ which is certainly not a function.
\end{remark}

\begin{definition}[Mixture zero curve] \label{def:mix_cold_curve}
    If $P_{\textup{zero}}(\tau, \bY)$ is well-defined as described in Definition~\ref{def:P_zero_curve}, then the mixture zero curve is defined to be,
    \begin{equation} \label{eq:cold_energy_curve}
    \begin{split}
        e_{\textup{zero}}(\tau, \bY) &\eqq \lim_{T\to 0^+} \sum_{m \in I(\bY)} Y_m e_m(P(\tau, T, \bY), T) \\
        &= \sum_{m \in I(\bY)} Y_m e_m(P_{\textup{zero}}(\tau, \bY), 0).
    \end{split}
    \end{equation}
\end{definition}

We would like to emphasize that $e_m(P_{\text{zero}}(\tau, \bY), 0) = e_{m,\text{zero}}(\tau_{m,\text{zero}}(P_{\text{zero}})) \neq e_{m,\text{zero}}(\tau)$.
That is, zero temperature isotherm is evaluated at the \textit{material} specific volume, $\tau_m$, and not the bulk quantity, $\tau$.
Each equation of state has it's own zero temperature curve, and so (depending on $\bY$), the generalized pressure zero curve, $P_{\text{zero}}(\tau, \bY)$, is also generally never the same as a specific material's zero temperature pressure curve, $P_{m,\text{zero}}(\tau_m) \eqq \lim_{T\to 0^+} P_m(\tau_m, T)$.
We are now ready to introduce the admissible set for the four-equation model under pressure-temperature equilibrium and ideal mixing (Assumption~\ref{ass:mix_gibbs}).
\begin{definition}[PTE admissible set] \label{def:pte_admissible_set}
Assume $P_{\textup{zero}}(\tau, \bY)$ is well-defined by Definition~\ref{def:P_zero_curve} for all $\bY \in \Delta_M$.
Then the admissible set is defined by,
\begin{equation} \label{eq:pte_admissible_set}
\begin{split}
    \calA_{\textup{PTE}} \eqq \Big\{\bu \in \Real^{M+d+1} : \rho(\bu) > &0, \, \alpha_k\rho_k \geq 0, \, \forall k \in \intset{1}{M}, \, \\
    &\qquad\qquad \sfe(\bu) > e_{\textup{zero}}(\tau, \bY) \Big\},
\end{split}
\end{equation}
where $e_{\textup{zero}}(\tau, \bY)$ is defined in Definition~\ref{def:mix_cold_curve}.
\end{definition}
As one may notice, the energy constraint in \eqref{eq:pte_admissible_set} is not easy to conceptualize but it is vital to guarantee a unique solution to the PTE system \eqref{eq:mass_frac_pte_system} (see Theorem~\ref{thm:existence}).
We also see from Proposition~\ref{prop:concave_energy_constraint} that $\calA_{\text{PTE}}$ is a convex set when each EOS satisfies the 3rd law of thermodynamics.
Under further thermodynamic assumptions, by Corollary~\ref{cor:concave_energy}, we have that $\calA_{\text{PTE}}$ is still convex even if all the EOS do not satisfy the 3rd law of thermodynamics.
\begin{remark}[Admissible set with weakened assumptions]
    One my wonder what the PTE admissible set is if the one-to-one assumption fails in Definition~\ref{def:P_zero_curve}.
    In the case of an ideal gas, we note that $e = c_v T$, hence $e$ is independent of $P$.
    From \eqref{eq:cold_energy_curve}, we would simply have that $e_m(P_{\text{zero}}, 0) = 0$ for $m$ being the index of the ideal gas law.
    In general, one would need to analyze the specific formulation of $e_m(P,T)$ to make a conclusive statement.
\end{remark}

\begin{theorem}[Existence of unique PTE solution] \label{thm:existence}
    Let $\bu_0 \in \calA_{\textup{PTE}}$ with $\tau_0 \eqq \rho_0^{-1}$ and $\bY_0 \eqq (\balpha\brho)_0 / \rho_0$.
    Assume each equation of state is thermodynamically stable by Definition~\ref{def:thermo_stability} and that all $M$ materials are present; that is, $Y_{m,0} > 0$ for all $m \in \intset{1}{M}$.
    Furthermore, let $P(\tau_0, T, \bY_0)$ be the generalized pressure from Lemma~\ref{lem:generalized_pressure} and assume that the following asymptotic properties hold: 
    \begin{enumerate}
        \item There exists $m \in \intset{1}{M}$ such that $\lim_{P \to \inf(\calP)^+} \tau_m(P,T) = \infty$ for all $T > 0$.
        \item $\lim_{P \to \infty} \sum_{m=1}^M Y_m \tau_m(P,T) = 0$ for all $T > 0$.
        \item $\lim_{T\to\infty} \sum_{m=1}^M Y_m e_m(P(\tau_0, T, \bY_0), T) = \infty$.
    \end{enumerate}
    Then a unique solution exists to the PTE system \eqref{eq:mass_frac_pte_system}.
\end{theorem}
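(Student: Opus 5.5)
We reduce the two-equation system \eqref{eq:mass_frac_pte_system} to a scalar equation in $T$. First we use Lemma~\ref{lem:generalized_pressure} to eliminate the volume equation. Assumption 1, together with positivity of every $\tau_m$ and $Y_{m,0}>0$, gives $\lim_{P\to\inf(\calP)^+}\sum_m Y_{m,0}\tau_m(P,T)=\infty$. Assumption 2 gives the limit $0$ as $P\to\infty$. Hence for each $T>0$ there is a unique $P(\tau_0,T,\bY_0)\in\calP(\bY_0)$ solving the volume equation. Since $\partial_P\sum_m Y_m\tau_m=-\tau K_T<0$ by thermodynamic stability, the implicit function theorem makes $T\mapsto P(\tau_0,T,\bY_0)$ continuously differentiable, with
\begin{equation*}
\Big(\pdv{P}{T}\Big)_{\tau,\bY}=\frac{\beta}{K_T}.
\end{equation*}
Here $\beta$ and $K_T$ are the mixture quantities from the mixture-derivative lemma. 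It then remains to show that the scalar function
\begin{equation*}
f(T)\eqq \sum_{m=1}^M Y_{m,0}\,e_m\big(P(\tau_0,T,\bY_0),T\big)=\frake(\tau_0,T;\bY_0)
\end{equation*}
attains the value $e_0\eqq\sfe(\bu_0)$ at exactly one $T>0$.

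Uniqueness follows from strict monotonicity of $f$. Differentiating along the isochore and invoking the remark that fixed $\bY$ reduces the mixture to a single EOS, we get $f'(T)=c_v$, the mixture heat capacity at constant volume. By \eqref{eq:important_thermo_iden}, $c_v=c_p\big(1-\tfrac{\beta^2\tau T}{c_pK_T}\big)$. Here $c_p=\sum Y_m c_{p,m}>0$ and $K_T=\sum\alpha_mK_{T,m}>0$ by stability of each material. Thus $c_v>0$ is equivalent to the mixture satisfying $c_pK_T>\beta^2\tau T$. This should follow from the per-material inequalities $c_{p,m}K_{T,m}>\beta_m^2\tau_mT$ via Cauchy--Schwarz. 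Writing $\beta=\sum\alpha_m\beta_m$ and $\alpha_m=Y_m\tau_m/\tau$, we have
\begin{equation*}
\beta^2\tau T=\frac{T}{\tau}\Big(\sum_m Y_m\tau_m\beta_m\Big)^2
\le \frac{T}{\tau}\Big(\sum_m Y_m\frac{\tau_m^2\beta_m^2}{c_{p,m}K_{T,m}\tau_m}\Big)\Big(\sum_m Y_mc_{p,m}K_{T,m}\tau_m\Big),
\end{equation*}
and then apply the material bounds. The main obstacle is making this estimate close: the second factor must be compared with $c_pK_T\tau=(\sum Y_mc_{p,m})(\sum Y_m\tau_mK_{T,m})$. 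We would try a weighted Cauchy--Schwarz with weights $Y_m$, splitting $\beta_m\tau_m=\sqrt{c_{p,m}}\cdot\beta_m\tau_m/\sqrt{c_{p,m}}$ and using $\beta_m^2\tau_m^2/c_{p,m}<\tau_mK_{T,m}/T$. This yields $(\sum Y_m\tau_m\beta_m)^2\le(\sum Y_mc_{p,m})(\sum Y_m\tau_mK_{T,m})/T$, which is exactly the required inequality, strict because each material inequality is strict. Equivalently, the PTE mixture inherits joint convexity, as in the maximum entropy remark.

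Existence then follows from the intermediate value theorem applied to the continuous, strictly increasing $f$ on $(0,\infty)$. Assumption 3 gives $f(T)\to\infty$ as $T\to\infty$. For the lower end, we use Definition~\ref{def:mix_cold_curve}: as $T\to0^+$, $P(\tau_0,T,\bY_0)\to P_{\textup{zero}}(\tau_0,\bY_0)$, and hence $f(T)\to e_{\textup{zero}}(\tau_0,\bY_0)$. This requires the one-to-one hypothesis of Definition~\ref{def:P_zero_curve}, built into $\calA_{\textup{PTE}}$, to pass the limit through the implicit function. Since monotonicity makes $e_{\textup{zero}}(\tau_0,\bY_0)=\inf_{T>0} f(T)$, the range of $f$ is $(e_{\textup{zero}}(\tau_0,\bY_0),\infty)$. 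The hypothesis $\bu_0\in\calA_{\textup{PTE}}$ places $e_0$ in this range, exactly as in the proof of Lemma~\ref{lem:zero_temp_admissible_set}. The unique $T^*$ with $f(T^*)=e_0$, together with $P^*=P(\tau_0,T^*,\bY_0)$, is then the unique solution of \eqref{eq:mass_frac_pte_system}. Uniqueness of the pair follows because any solution must have $P$ equal to the generalized pressure at its $T$.
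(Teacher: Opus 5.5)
Your proposal is correct and follows the paper's proof: eliminate $P$ through the generalized pressure, show that $T\mapsto\sum_m Y_{m,0}e_m(P(\tau_0,T,\bY_0),T)$ is continuous with derivative $c_v$, and obtain existence from $e_0>e_{\textup{zero}}(\tau_0,\bY_0)$ together with assumption 3. The paper reaches $c_v$ by an explicit chain-rule sum using $(\partial P/\partial T)_{\tau,\bY}=\beta/K_T$, and then simply cites \citet[Sec.~3.2]{grove2010pressure} for $c_v>0$; your weighted Cauchy--Schwarz split $\tau_m\beta_m=\sqrt{c_{p,m}}\cdot\tau_m\beta_m/\sqrt{c_{p,m}}$ instead proves $c_pK_T>\beta^2\tau T$ directly from the per-material inequalities, which makes that step self-contained (your first displayed Cauchy--Schwarz split does not close, but you rightly abandon it).
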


\begin{proof}
    From thermodynamic stability and assumptions 1 and 2, we know that the generalized pressure function $P(\tau_0, T, \bY_0)$ exists and is well-defined for all $T > 0$ (Lemma~\ref{lem:generalized_pressure}).
    So one only needs to show that $e_0 \eqq \sfe(\bu_0)$ is in the range of the following function:
    \begin{equation}
        g_0(T) \eqq \sum_{m=1}^M Y_{m,0} e_m(P(\tau_0, T, \bY_0), T).
    \end{equation}
    First, we note that $g_0(T)$ is a continuous function of $T$ since each $e_m$ is continuous and $P(\tau_0, T, \bY_0)$ is the inverse of a continuous function of $T$; namely, $\sum_{m=1}^M Y_{m,0} \tau_m(P, T)$.
    Then from the hypothesis, $\bu_0 \in \calA_{\text{PTE}}$, this implies $e_0 > \lim_{T\to 0^+} g_0(T)$ (see Definition~\ref{def:mix_cold_curve}).
    Lastly, by the asymptotic assumption 3, $e_0 \in \text{Range}(g_0)$. 
    Hence a solution exists.
    We now show that the solution is unique.
    
    We proceed by showing that $g_0(T)$ is a strictly increasing function of $T$.
    Computing the derivative with respect to $T$, we have,
    \begin{align*}
        g_0'(T) &= 
        \sum_{m=1}^M Y_m \frac{\partial}{\partial T} \big( e_m(P(\tau_0,T,\bY_0),T) \big) \\
        &= \sum_{m=1}^M Y_m \Big[ \Big(\pdv{e_m}{P}\Big)_T \Big(\pdv{P}{T} \Big)_{\tau, \bY} + \Big( \pdv{e_m}{T} \Big)_P \Big]
    \end{align*}
    Note that $\big(\pdv{e_m}{P}\big)_T = P \tau_m K_{T,m} - \beta_m \tau_m T$ and $\big(\pdv{e_m}{T}\big)_P = (c_{p,m} - P\tau_m \beta_m)$.
    To determine $\big(\pdv{P}{T} \big)_{\tau, \bY}$, we differentiate $\sum_{m=1}^M Y_m \tau_m(P(\tau_0,T,\bY_0), T) = \tau_0$ with respect to $T$, to find, $\sum_{m=1}^M Y_m \big[\big(\pdv{\tau_m}{P}\big)_T \big(\pdv{P}{T} \big)_{\tau, \bY} + \big(\pdv{\tau_m}{T}\big)_P\big] = 0$.
    Solving, we find that $\big(\pdv{P}{T} \big)_{\tau, \bY} = -\sum_{m=1}^M Y_m \big(\pdv{\tau_m}{T}\big)_P / \sum_{m=1}^M Y_m \big( \pdv{\tau_m}{P} \big)_T = \beta / K_T$.
    Therefore,
    \begin{align*}
        g_0'(T)
        &= \sum_{m=1}^M Y_m \Big[ (P\tau_m K_{T,m} - \beta_m \tau_m T) \frac{\beta}{K_T} + (c_{p,m} - P\tau_m\beta_m) \Big] \\
        &= \tau (P K_T - \beta T) \frac{\beta}{K_T} + (c_p - P\tau \beta) \\
        &= c_p - \frac{\beta^2\tau T}{K_T} = c_v.
    \end{align*}
    As proven in \citet[Sec.~3.2]{grove2010pressure}, $c_v > 0$ since each EOS is thermodynamically consistent by Definition~\ref{def:thermo_stability}.
    Thus, $\sum_{m=1}^M Y_m e_m(P(\tau_0, T, \bY_0),T)$ is strictly increasing, with respect to $T$.
    Hence a unique $T_0 > 0$ exists such that $\sum_{m=1}^M Y_m e_m(P_0, T_0) = e_0$ where $P_0 \eqq P(\tau_0, T_0, \bY_0)$.
    Thus $(P_0, T_0)$ is the unique solution to the PTE system \eqref{eq:mass_frac_pte_system}.
\end{proof}

The importance of Theorem~\ref{thm:existence} is not only that a PTE solution can be found (assuming the state belongs to the admissible set), but that that the generalized pressure and temperature are uniquely defined by $\tau$, $e$, and $\bY$.
That is, $P = P(\tau, e, \bY)$ and $T = T(\tau, e, \bY)$ are well-defined functions of $\tau$, $e$, and $\bY$ (with a slight abuse of notation), thus completing the thermodynamic picture.

We now proceed by establishing the convexity of the PTE admissible set, $\calA_{\text{PTE}}$; however, we must make one more technical assumption regarding the convergence of $\frake(\tau, \theta(\tau, s))$ to the cold curve (in the sense of $s \to 0^+$).

\begin{assumption}[Convergence to the cold curve] \label{ass:cold_curve_convergence}
    We assume that equations of state which satisfy the 3rd law of thermodynamics converge in the following sense:
    \begin{equation}
        \frake(\tau, \theta(\tau,s)) \to e_{\text{cold}}(\tau) \quad \text{ in } \, C^1_{\text{loc}}(\Real_+) \, \text{ as } s \to 0^+.
    \end{equation}
    That is, the function and its partial derivative with respect to $\tau$ converge uniformly on any compact subset of $\Real_+$.
\end{assumption}
This assumption is required to identify the following thermodynamic derivative identity in the limit of zero entropy.
\begin{lemma}[Cold curve consistency] \label{lem:cold_curve_consistency}
    Under Assumption~\ref{ass:cold_curve_convergence}, the following identity holds:
    \begin{equation}
        e'_{\textup{cold}}(\tau) = - P_{\textup{cold}}(\tau)
    \end{equation}
    where $P_{\textup{cold}}(\tau) = \lim_{s\to 0^+} p(\tau, s)$.
\end{lemma}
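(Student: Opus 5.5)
The plan is to pass to the limit $s\to 0^+$ in the Gibbs identity at fixed entropy. For each fixed $s>0$, write $\frake(\tau,\theta(\tau,s)) = e(\tau,s)$, the internal energy as a function of $\tau$ and $s$. The Gibbs identity $T\diff s = \diff e + P\diff\tau$ then gives the pointwise relation
\begin{equation*}
    \partial_\tau \big[\frake(\tau,\theta(\tau,s))\big] = \Big(\pdv{e}{\tau}\Big)_s = -p(\tau,s), \qquad \tau>0,\ s>0.
\end{equation*}
So the family $f_s(\tau) \eqq \frake(\tau,\theta(\tau,s))$ has $\tau$-derivatives $f_s'(\tau) = -p(\tau,s)$, and the claimed identity is exactly this relation in the limit $s\to 0^+$.

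Now invoke Assumption~\ref{ass:cold_curve_convergence}. Fix a compact interval $[a,b]\subset\Real_+$. On $[a,b]$ we have $f_s \to e_{\text{cold}}$ uniformly, and $f_s'$ converges uniformly to some continuous limit $h$. The standard theorem on uniform convergence of derivatives then applies: if $f_s\to f$ pointwise and $f_s'\to h$ uniformly, then $f$ is differentiable and $f'=h$. Concretely, for $\tau_1,\tau\in[a,b]$,
\begin{equation*}
    f_s(\tau) - f_s(\tau_1) = \int_{\tau_1}^{\tau} f_s'(\xi)\diff \xi,
\end{equation*}
and passing to the limit on both sides gives $e_{\text{cold}}(\tau)-e_{\text{cold}}(\tau_1) = \int_{\tau_1}^\tau h(\xi)\diff\xi$. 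Hence $e_{\text{cold}}\in C^1([a,b])$ with $e_{\text{cold}}' = h$. Since $[a,b]$ is arbitrary, this holds on all of $\Real_+$.

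Finally, identify $h$. Since $f_s'(\tau) = -p(\tau,s)$ converges uniformly (hence pointwise) to $h(\tau)$, the limit $\lim_{s\to 0^+} p(\tau,s)$ exists and equals $-h(\tau)$. This limit is $P_{\text{cold}}(\tau)$ by definition, so $e'_{\text{cold}}(\tau) = -P_{\text{cold}}(\tau)$.

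The main obstacle is justifying the pointwise identity $\partial_\tau f_s = -p(\cdot,s)$ rigorously. It requires that $e(\tau,s)$ be differentiable and that $\frake(\tau,\theta(\tau,s))$ coincide with it. This follows from the inversion argument in Section~\ref{sec:thermo}: the functions $p(\tau,s)$ and $\theta(\tau,s)$ are well defined under thermodynamic stability, and the Gibbs identity defines $P=-(\partial_\tau e)_s$.

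A second point to record is that the $C^1_{\text{loc}}$ assumption is essential here. Without it, one cannot interchange the limit in $s$ with differentiation in $\tau$, and even the existence of $P_{\text{cold}}$ would not be guaranteed.
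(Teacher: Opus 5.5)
Your proposal is correct and follows the paper's route. You use $(\partial e/\partial\tau)_s = -p(\tau,s)$ from the Gibbs identity, then invoke the $C^1_{\text{loc}}$ convergence of Assumption~\ref{ass:cold_curve_convergence} to pass the $\tau$-derivative through the limit $s\to 0^+$; you simply spell out that interchange via the integral form, which the paper leaves implicit. Your sign is also right, whereas the paper's displayed chain ends with $=P_{\text{cold}}(\tau)$, a typo for $-P_{\text{cold}}(\tau)$.
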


\begin{proof}
    From the $C^1_{\text{loc}}$ convergence, we can pass the derivative through the limit; in particular,
    \begin{equation}
        e_{\text{cold}}'(\tau) = \lim_{s \to 0^+} \Big( \pdv{e}{\tau} \Big)_s = -\lim_{s \to 0^+} p(\tau, s) = P_{\text{cold}}(\tau).
    \end{equation}
    This completes the proof.
\end{proof}

\begin{proposition}[Concavity of admissibility constraint] \label{prop:concave_energy_constraint}
    Assume each equation of state is thermodynamically stable (Definition~\ref{def:thermo_stability}) and satisfies the 3rd law of thermodynamics (Remark~\ref{rem:3rd_law}), then the following functional is concave:
    \begin{equation}
        \Psi_{\textup{cold}}(\bu) = \rho \sfe(\bu) - \sum_{m \in I(\bY)} \alpha_m\rho_m e_m(P_{\textup{cold}}(\rho^{-1}, \bY(\bu)), 0).
    \end{equation}
\end{proposition}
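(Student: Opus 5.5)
Write the functional in terms of the partial densities $\brho \eqq (\alpha_1\rho_1,\dots,\alpha_M\rho_M)$, the momentum and the total energy. Then
\begin{equation*}
\Psi_{\textup{cold}}(\bu) = E - \frac{\Vert \bsfm\Vert^2_{\ell^2}}{2\rho} - \Phi(\brho), \qquad \Phi(\brho) \eqq \sum_{m} \alpha_m\rho_m\, e_{m,\textup{cold}}(\tau_{m,\textup{cold}}(P_{\textup{cold}})).
\end{equation*}
The first two terms are concave, being linear minus a perspective of a convex quadratic. It therefore suffices to show that $\Phi$ is convex in $\brho \in \Real^M_{\ge 0}$, $\rho>0$. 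The idea, following the minimum energy principle mentioned in the earlier remark, is to write $\Phi$ as an infimum of jointly convex functions.

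\textbf{Step 1: variational characterization.} Introduce the material volumes $V_m \eqq \alpha_m \ge 0$ with $\sum_m V_m = 1$; each $V_m$ is the material volume per unit mixture volume. For each active material define the perspective $h_m(\alpha_m\rho_m, V_m) \eqq \alpha_m\rho_m\, e_{m,\textup{cold}}\big(V_m/(\alpha_m\rho_m)\big)$. I claim
\begin{equation*}
\Phi(\brho) = \inf\Big\{ \sum_m h_m(\alpha_m\rho_m, V_m) \, : \, V_m \ge 0,\ \sum_m V_m = 1 \Big\}.
\end{equation*}
By thermodynamic stability, $e_m(\tau,s)$ is jointly convex, so $e_{m,\textup{cold}}(\tau) = e_m(\tau,0)$ is convex in $\tau$; the 3rd law is what allows us to set $s=0$ here, which the zero-temperature isotherm of an ideal gas would not permit. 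Hence each $h_m$ is a perspective of a convex function and is jointly convex. The objective is convex and the constraint is linear, so the KKT conditions characterize the minimizer. Using Lemma~\ref{lem:cold_curve_consistency}, $\partial_{V_m} h_m = e'_{m,\textup{cold}}(\tau_m) = -P_{m,\textup{cold}}(\tau_m)$. Stationarity therefore reads $P_{m,\textup{cold}}(\tau_m) = \lambda$ for all active $m$ together with $\sum_m Y_m\tau_m = \tau$. This is exactly the definition of $P_{\textup{cold}}(\tau,\bY)$ with $\tau_m = \tau_{m,\textup{cold}}(P_{\textup{cold}})$, which is unique by the one-to-one assumption in Definition~\ref{def:P_zero_curve}. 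So the infimum is attained at the cold PTE state and equals $\Phi$.

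\textbf{Step 2: convexity from partial minimization.} The map $(\brho, V) \mapsto \sum_m h_m$ is jointly convex, and the constraint set $\{V\ge0, \sum V_m = 1\}$ does not depend on $\brho$. The partial infimum over $V$ of a jointly convex function is convex in the remaining variables. Hence $\Phi$ is convex and $\Psi_{\textup{cold}}$ is concave. Inactive materials, with $\alpha_m\rho_m=0$, are handled by the standard closure of the perspective: $h_m(0,V)$ is the recession value. This value is consistent provided $e_{m,\textup{cold}}$ has at most linear growth as $\tau\to\infty$, which holds because $P_{\textup{cold}}$ stays bounded below there.

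\textbf{Main obstacle.} The hard part is Step 1: justifying that the constrained infimum is attained in the interior and coincides with the cold PTE state. This needs boundary behavior of $h_m$. As $V_m\to0$ with $\alpha_m\rho_m>0$, the term $h_m\to+\infty$ because $P_{m,\textup{cold}}\to\infty$ as $\tau\to0$. The convergence to the cold curve also needs the $C^1$ control of Assumption~\ref{ass:cold_curve_convergence}, which is what lets us differentiate $e_{m,\textup{cold}}$ and use Lemma~\ref{lem:cold_curve_consistency} in the KKT system. I would first check the case where all materials are active, and then treat faces of the simplex by lower semicontinuity of the closed perspective.
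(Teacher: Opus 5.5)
Your interior argument is the paper's. It uses the same perspectives $x_m e_{m,\textup{cold}}(V_m/x_m)$ with $x_m=\alpha_m\rho_m$ and $V_m=\alpha_m$, the same minimizer $\overline{V}_m = x_m\tau_{m,\textup{cold}}(P_c)$ identified through $e_{m,\textup{cold}}' = -P_{m,\textup{cold}}$, and the same partial-minimization step. Where you invoke KKT, the paper uses only its sufficiency half. Convexity gives $e_{m,\textup{cold}}(V_m/x_m)\ge e_{m,\textup{cold}}(\overline{\tau}_m) - P_c(V_m/x_m-\overline{\tau}_m)$. Multiplying by $x_m$ and summing, the $P_c$ terms cancel because $\sum_m V_m=\sum_m\overline{V}_m=1$. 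This certifies the explicit feasible point $\overline{V}$ as a global minimizer. So the ``main obstacle'' you describe (attainment in the interior, blow-up of $h_m$ as $V_m\to 0$) never arises, and you never need uniqueness of the stationary point.

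The step that fails is your treatment of absent materials. With the closed perspective, $h_m(0,V) = -V P_m^\infty$, where $P_m^\infty = \lim_{\tau\to\infty}P_{m,\textup{cold}}(\tau)$, and this is finite. So your infimum may assign volume to a material that is not present. The summed supporting-line inequality then also requires $-P_m^\infty V\ge -P_c V$, i.e.\ $P_c\ge P_m^\infty$ for every absent $m$. Nothing guarantees this, because $P_c$ only has to lie in $\bigcap_{m\in I(\bY)}\calP_m$. Take a pure solid in tension ($P_c<0$) with an absent gas-like material ($P_m^\infty=0$). Moving volume $\delta$ from the solid to the absent material changes the objective by $(P_c-P_m^\infty)\delta+o(\delta)<0$. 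So on that face your infimum is strictly smaller than the cold term in $\Psi_{\textup{cold}}$, and appealing to lower semicontinuity of the closed perspective only reproduces this smaller value. You would be proving convexity of a different function.

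The paper avoids this by \emph{not} taking the closure: it sets $\Phi_m(0,V)=+\infty$ for $V>0$ and $\Phi_m(0,0)=0$. This extension is not closed, but it is still convex on $[0,\infty)^2$. Its effective domain $(0,\infty)^2\cup\{(0,0)\}$ is convex, and it is linear along rays from the origin. It forces $V_m=0$ for every absent material, so the identity with the cold term holds on every face. Partial minimization of any convex function over a fixed convex set is convex, so closedness is never needed.
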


\begin{proof}
    To start, let $\bw \eqq (\rho , \bbm, E)$ and define $U(\rho, \bsfm, E) \eqq E - \frac{\Vert \bsfm \Vert^2_{\ell^2}}{2\rho}$. 
    Note the classical result that $U(\bw)$ is a concave functional.
    Note the mapping $\bu \mapsto \bw(\bu) = (\sum_{m=1}^M \alpha_m \rho_m, \bbm, E)$ is a linear map and $\rho \sfe(\bu) = U(\bw(\bu))$.
    It follows that $\rho \sfe$ is a concave function of $\bu$.
    More explicitly, for $\lambda \in [0,1]$, and $\bu_1, \bu_2 \in \calA_{\text{PTE}}$,
    \begin{equation*}
    \begin{split}
        \rho\sfe(\lambda \bu_1 + (1 - \lambda) \bu_2) &= 
        U(\bw(\lambda\bu_1 + (1 - \lambda) \bu_2))\\
        &=U(\lambda \bw(\bu_1) + (1 - \lambda) \bw(\bu_2)) \\
        &\geq \lambda U(\bw(\bu_1)) + (1 - \lambda) U(\bw(\bu_2)) \\
        &= \lambda \rho \sfe(\bu_1) + (1 - \lambda) \rho \sfe(\bu_2).
    \end{split}
    \end{equation*}

    Now, we must show that the cold curve portion of the function,
    \begin{equation} \label{proof:cold_mix}
        \sum_{m \in I(\bY)} \alpha_m \rho_m e_{m,\text{cold}}(\tau_{m,\text{cold}}(P_{\text{cold}}(\rho^{-1}, \bY))),
    \end{equation}
    is a convex function of $\{\alpha_m \rho_m\}_{m=1}^M$.
    To simplify notation we set $x_m \eqq \alpha_m \rho_m$ and $P_c \eqq P_{\text{cold}}(\rho^{-1}, \bY)$.
    (Note the identity, $e_m(P_c, 0) = e_{m,\text{cold}}(\tau_{m,\text{cold}}(P_c))$.)
    Since $P_c$ is an implicitly defined function, directly proving convexity by computing the Hessian is not easily achieved.
    We instead show that \eqref{proof:cold_mix} can be written in a simpler form for which convexity can be analyzed.
    
    We define
    \begin{equation} \label{proof:energy_inf_functional}
        \calE(x_1, \ldots, x_M) \eqq \inf_{\substack{V_m \geq 0 \\ \sum_{m} V_m = 1}} \sum_{m=1}^M \Phi_m(x_m, V_m),
    \end{equation}
    where $V_m \eqq x_m \tau_m$ and
    the $\Phi_m$ are the \textit{extended-valued perspective} for each material, defined by,
    \begin{equation}
        \Phi_m(x, V) = \begin{cases}
            x e_{m,\text{cold}}(V / x), \quad &x > 0, \, V > 0, \\
            0, &x = 0, \, V = 0, \\
            +\infty, &\text{otherwise.}
        \end{cases}
    \end{equation}
    Note, $\Phi_m$ is used to handle cases when a material vanishes.
    In addition, the $+\infty$ value is only defined for cases that should not occur since $x = 0$ if and only if $V = 0$.
    So we have that,
    \begin{equation}
        \inf_{\substack{V_m \geq 0 \\ \sum_{m} V_m = 1}} \sum_{m=1}^M \Phi_m(x_m, V_m) = \inf_{\substack{V_m \geq 0 \\ \sum_{m} V_m = 1}} \sum_{m \in I(\bY)} x_m e_{m,\text{cold}}\Big(\frac{V_m}{x_m}\Big).
    \end{equation}
    Note, we are not invoking pressure temperature equilibrium, but simply the material equation of state definition of the cold curve.
    We claim that the following identity holds:
    \begin{equation} \label{proof:infimum_identity}
        \calE(x_1, \ldots, x_M) = \sum_{m \in I(\bY)} x_m e_{m,\text{cold}}(\tau_{m,\text{cold}}(P_c)).
    \end{equation}
 
    To prove this, define $\overline{V}_m \eqq x_m \overline{\tau}_m$ where $\overline{\tau}_m \eqq \tau_{m,\text{cold}}(P_c)$ for each $m \in I(\bY)$.
    Then we have the following,
    \begin{equation}
        \sum_{m \in I(\bY)} \overline{V}_m = \sum_{m \in I(\bY)} x_m \overline{\tau}_m = \rho \sum_{m \in I(\bY)} Y_m \overline{\tau}_m = \rho \tau = 1.
    \end{equation}
    Note we have used $\sum_{m \in I(\bY)} Y_m \overline{\tau}_m = \tau$ by Definition~\ref{def:P_zero_curve} since $P_c$ is the cold pressure associated with $\tau = 1/\rho$.

    We now proceed to show that $\{\overline{V}_m\}$ is the minimizer of \eqref{proof:energy_inf_functional}.
    Let $\{V_m\}$ be an arbitrary candidate for the minimizer.
    Since $e_{m,\text{cold}}$ is convex and $e_{m,\text{cold}}'(\tau_{m,\text{cold}}(P_c)) = - P_c$ (by Lemma~\ref{lem:cold_curve_consistency}), a Taylor series expansion of $e_{m,\text{cold}}(\tau_m)$ at $\tau_m = \overline{\tau}_m$ provides us the following inequality,
    \begin{equation*}
        e_{m,\text{cold}}(\tau_m) = e_{m,\text{cold}}\Big( \frac{V_m}{x_m} \Big) \geq e_{m,\text{cold}}(\overline{\tau}_m) - \overline{P} \Big(\frac{V_m}{x_m} - \overline{\tau}_m \Big).
    \end{equation*}
    Multiplying by $x_m$ and summing over all $m \in I(\bY)$, we have,
    \begin{equation*}
        \sum_{m \in I(\bY)} x_m e_{m,\text{cold}}\Big( \frac{V_m}{x_m} \Big) \geq \sum_{m \in I(\bY)} x_m e_{m,\text{cold}}(\overline{\tau}_m) - \overline{P} \sum_{m \in I(\bY)} (V_m - \overline{V}_m).
    \end{equation*}
    Since $\sum_{m\in I(\bY)} V_m = \sum_{m \in I(\bY)} \overline{V}_m = 1$, we see that 
    \begin{equation*}
        \sum_{m \in I(\bY)} x_m e_{m,\text{cold}}\Big( \frac{V_m}{x_m} \Big) \geq \sum_{m \in I(\bY)} x_m e_{m,\text{cold}}\Big( \frac{\overline{V}_m}{x_m} \Big)
    \end{equation*}
    for all potential minimizers, $\{V_m\}$.
    Thus, $\{\overline{V}_m\}$ is the minimizer with the associated cold curve pressure, $P_{\text{cold}}(\tau, \bY)$.
    Then, noting the following identities, $e_{m,\text{cold}}(\overline{\tau}_m) = e_{m,\text{cold}}(\overline{V}_m / x_m) = e_{m,\text{cold}}(\tau_{m,\text{cold}}(P_{\text{cold}}(\tau, \bY)))$, we see that the identity \eqref{proof:infimum_identity} holds.

    We now only need to show that $\calE(x_1, \ldots, x_M)$ is a convex function.
    Note that each $e_{m,\text{cold}}(\tau_m)$ is a convex function of $\tau_m$ (from thermodynamic stability), therefore, the \textit{perspective} of each $e_{m,\text{cold}}$ is convex; that is, $\sfp_m(x_m, V_m) \eqq x_m e_{m,\text{cold}}(V_m / x_m)$ is convex on $\Real_+ \times \Real_+$.
    It then follows that each $\Phi_m(x_m, V_m)$ is convex on $[0, \infty)^2$.
    Then, since the sum of convex functions is a convex function and taking the infimum with respect to the $\{V_m\}_{m=1}^M$ over the convex set $\{\bV \in (\Real_+)^M : \sum_{m=1}^M V_m = 1 \}$ produces a convex function, we have that $\calE$ is convex.
    Therefore, \eqref{proof:cold_mix} is a convex function of $\alpha_1\rho_1, \ldots, \alpha_M\rho_M$ which completes the proof.
\end{proof}

One may wonder if the assumption of the 3rd law of thermodynamics in Proposition~\ref{prop:concave_energy_constraint} can be dropped.
It is possible; however, the main caveat in the proof is that each material must satisfy $\partial_\tau (\lim_{T \to 0^+} \frake(\tau, T) ) = -P_{\text{zero}}$.
The next Lemma provides the conditions that we require.
\begin{lemma}[Zero temperature identity] \label{lem:cold_deriv_equal_p}
    For a (single) thermodynamically stable equation of state (Definition~\ref{def:thermo_stability}),
    assume that
    \begin{equation}
        \frake(\tau, T) \to e_{\textup{zero}}(\tau) \quad \text{ in } \, C^1_{\textup{loc}}(\Real_+) \, \text{ as } T \to 0^+
    \end{equation}
    and that $T \frac{\beta(P(\tau, T), T)}{K_T(P(\tau, T), T)} \to 0$ pointwise as $T \to 0^+$ (not necessarily uniformly).
    Then the following identity holds:
    \begin{equation}
        \lim_{T \to 0^+} \partial_\tau \frake(\tau, T) = e_{\textup{zero}}'(\tau) = -P_{\textup{zero}}(\tau).
    \end{equation}
\end{lemma}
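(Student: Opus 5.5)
The plan is to differentiate $\frake(\tau,T) = e(P(\tau,T),T)$ in $\tau$ at fixed $T>0$, rewrite the derivative with quantities already introduced, and pass to the limit $T\to 0^+$. Two things must be established. First, the $C^1_{\text{loc}}$ hypothesis lets us interchange the limit and the $\tau$-derivative. Second, the closed form of $\partial_\tau\frake$ has a limit equal to $-P_{\text{zero}}(\tau)$. Here $P_{\text{zero}}(\tau)\eqq\lim_{T\to0^+}P(\tau,T)$, the single-material zero temperature pressure curve.

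\textbf{Step 1: closed form of $\partial_\tau \frake$.} By the chain rule, $\partial_\tau\frake = (\partial e/\partial P)_T\,(\partial P/\partial\tau)_T$. From the proof of Theorem~\ref{thm:existence} we have $(\partial e/\partial P)_T = P\tau K_T - \beta\tau T$. By definition of $K_T$, $(\partial P/\partial \tau)_T = -1/(\tau K_T)$. Therefore
\begin{equation*}
    \partial_\tau \frake(\tau,T) = -P(\tau,T) + T\,\frac{\beta(P(\tau,T),T)}{K_T(P(\tau,T),T)}.
\end{equation*}
This is the energy equation of state $(\partial e/\partial\tau)_T = T(\partial P/\partial T)_\tau - P$, combined with $(\partial P/\partial T)_\tau = \beta/K_T$ from the cyclic rule computation in Section~\ref{sec:thermo}. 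Thermodynamic stability guarantees $K_T>0$, so $P(\tau,T)$ is well defined and the division is legitimate.

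\textbf{Step 2: passing to the limit.} The $C^1_{\text{loc}}$ convergence implies that $\partial_\tau\frake(\cdot,T)$ converges locally uniformly. The uniform limit of derivatives is the derivative of the limit function, so $e_{\text{zero}}$ is $C^1$ and
\begin{equation*}
    \lim_{T\to0^+}\partial_\tau\frake(\tau,T) = e_{\text{zero}}'(\tau).
\end{equation*}
We then take $T\to0^+$ pointwise in the closed form of Step~1. The term $T\beta/K_T$ vanishes by hypothesis. Consequently $-P(\tau,T) = \partial_\tau\frake - T\beta/K_T$ converges, and its limit is $e_{\text{zero}}'(\tau)$. This proves simultaneously that $P_{\text{zero}}(\tau)$ exists and that $e'_{\text{zero}}(\tau) = -P_{\text{zero}}(\tau)$. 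The existence of $P_{\text{zero}}$ should be stated explicitly, since otherwise it would have to be assumed.

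\textbf{Main obstacle.} The delicate point is the interchange of limit and derivative, together with checking that only pointwise convergence of $T\beta/K_T$ is needed. Uniform convergence of $\partial_\tau\frake$ already pins down $e'_{\text{zero}}$, and the pressure limit is then inferred pointwise, so no uniformity in $P$ is required. This is why the lemma's hypothesis can be pointwise.

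A secondary concern is the sign of $\beta$, which can be negative (Remark~\ref{rem:neg_gru_coeff}). This does no harm, because only the vanishing of the product $T\beta/K_T$ is used.
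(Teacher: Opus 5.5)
Your proposal is correct and follows the paper's proof essentially verbatim. You use the chain rule with $(\partial e/\partial P)_T = P\tau K_T - \beta\tau T$ to get $\partial_\tau\frake = T\beta/K_T - P$, then use the $C^1_{\text{loc}}$ convergence to pass the $\tau$-derivative through the limit as in Lemma~\ref{lem:cold_curve_consistency}. Your remark that the existence of $P_{\text{zero}}(\tau)=\lim_{T\to0^+}P(\tau,T)$ follows from the argument, rather than being assumed, is a useful clarification that the paper leaves implicit.
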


\begin{proof}
    Computing the derivative with respect to $\tau$, we have,
    \begin{equation*}
        \partial_\tau \frake(\tau, T) = \partial_\tau e(P(\tau, T), T) = \Big( \pdv{e}{P} \Big)_T \Big( \pdv{P}{\tau} \Big)_T = T\frac{\beta(P(\tau, T), T)}{K_T(P(\tau, T), T)} - P(\tau, T).
    \end{equation*}
    where we have used the identity $\big( \pdv{e}{P} \big)_T = P \tau K_T - \beta \tau T$.
    From this we see that $\lim_{T\to 0^+} \partial_\tau \frake(\tau, T) = - P_{\text{zero}}(\tau)$.
    The result follows just as in Lemma~\ref{lem:cold_curve_consistency}.
    This completes the proof.
\end{proof}

\begin{example}[Stiffened-gas]
    To see an application of Lemma~\ref{lem:cold_deriv_equal_p}, consider the stiffened-gas law, $\frake(\tau, T) = c_v T + P_\infty \tau + q$.
    We see that $\partial_\tau \frake(\tau, T) = P_\infty$ which converges uniformly to $P_\infty$ as $T \to 0^+$ (trivially). 
    For the second assumption, we have, $\lim_{T\to 0^+} T \beta / K_T = \lim_{T\to 0^+} (P(\tau, T) + P_\infty) = \lim_{T\to 0^+} \frac{RT}{\tau} = 0$.
    Note, the convergence of this limit is \textit{not} uniform.
    Then we see that the result of Lemma~\ref{lem:cold_deriv_equal_p} holds by checking the following identities, $\lim_{T \to 0^+} \frake(\tau, T) = P_\infty \tau + q$, $\partial_\tau \frake(\tau, T) = P_\infty$, and $\lim_{T \to 0^+} P(\tau, T) = -P_\infty$.
\end{example}

\begin{corollary} \label{cor:concave_energy}
    If the conditions in Lemma~\ref{lem:cold_deriv_equal_p} are satisfied for each equation of state, all $e_{m,\textup{zero}}(\tau)$ are convex and $P_{\textup{zero}}(\tau, \bY)$ is well defined by Definition~\ref{def:P_zero_curve}, then 
    \begin{equation}
        \Psi_{\textup{zero}}(\bu) \eqq \rho \sfe(\bu) - \sum_{m \in I(\bY)} \alpha_m \rho_m e_{m,\textup{zero}}(P_{\textup{zero}}(\rho^{-1}, \bY))
    \end{equation}
    is concave.
\end{corollary}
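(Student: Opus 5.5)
The plan is to repeat the argument of Proposition~\ref{prop:concave_energy_constraint} verbatim, with the cold curve replaced by the zero temperature isotherm. The only ingredient of that proof that used the 3rd law was the derivative identity of Lemma~\ref{lem:cold_curve_consistency}. In the present setting this is supplied by Lemma~\ref{lem:cold_deriv_equal_p}, which gives $e_{m,\textup{zero}}'(\tau) = -P_{m,\textup{zero}}(\tau)$ for each material.

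First, the term $\rho\sfe(\bu) = U(\bw(\bu))$ is concave, exactly as before: it is the composition of the concave function $U$ with a linear map. It therefore suffices to show that
\begin{equation*}
\sum_{m\in I(\bY)} x_m\, e_{m,\textup{zero}}\big(\tau_{m,\textup{zero}}(P_z)\big), \qquad x_m \eqq \alpha_m\rho_m,\quad P_z \eqq P_{\textup{zero}}(\rho^{-1},\bY),
\end{equation*}
is convex in $(x_1,\ldots,x_M)$. I read the notation $e_{m,\textup{zero}}(P_{\textup{zero}})$ in the statement as $e_m(P_{\textup{zero}},0)$, consistent with Definition~\ref{def:mix_cold_curve}.

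To show convexity, define $\calE(x_1,\ldots,x_M)$ as the infimum over $V_m\ge 0$ with $\sum_m V_m=1$ of $\sum_m \Phi_m(x_m,V_m)$. Here $\Phi_m$ is the extended-valued perspective of $e_{m,\textup{zero}}$. Since $e_{m,\textup{zero}}$ is convex by hypothesis, each $\Phi_m$ is jointly convex on $[0,\infty)^2$. Partial minimization of a jointly convex function over the convex set $\{\sum_m V_m = 1,\ V_m\ge0\}$ gives a convex function of $\bx$, so $\calE$ is convex.

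It remains to identify $\calE$ with the expression above. I would take the candidate minimizer $\overline V_m \eqq x_m\overline\tau_m$ with $\overline\tau_m\eqq\tau_{m,\textup{zero}}(P_z)$. This candidate is feasible because $\sum_m Y_m\overline\tau_m=\tau$, which holds because $P_{\textup{zero}}$ is the inverse of $\tau_{\textup{zero}}(\cdot,\bY)$ (Definition~\ref{def:P_zero_curve}); multiplying by $\rho$ gives $\sum_m \overline V_m=1$.

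Next I need $e_{m,\textup{zero}}'(\overline\tau_m) = -P_z$. Lemma~\ref{lem:cold_deriv_equal_p} gives $e_{m,\textup{zero}}'(\overline\tau_m)=-P_{m,\textup{zero}}(\overline\tau_m)$, so I must check that $P_{m,\textup{zero}}(\tau_{m,\textup{zero}}(P_z))=P_z$. This amounts to the commutation of the limit $T\to0^+$ with inverting $\tau_m(\cdot,T)$. I expect this to be the main obstacle. I would argue it from the strict monotonicity of $\tau_m$ in $P$ (since $K_{T,m}>0$) together with the well-definedness of $P_{\textup{zero}}$, which implicitly requires each $\tau_{m,\textup{zero}}$ to be injective near $P_z$. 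The ideal gas of Remark~\ref{rem:multi_valued_cold} shows this is exactly where things can degenerate, so I would state this as the interpretation of the hypothesis.

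With the derivative identity in hand, the supporting-line inequality for the convex $e_{m,\textup{zero}}$ gives
\begin{equation*}
x_m e_{m,\textup{zero}}(V_m/x_m)\ge x_m e_{m,\textup{zero}}(\overline\tau_m)-P_z(V_m-\overline V_m).
\end{equation*}
Summing over $m$ and using $\sum_m V_m=\sum_m\overline V_m=1$ shows that $\overline V$ attains the infimum. This proves the identity, and hence the concavity of $\Psi_{\textup{zero}}$.
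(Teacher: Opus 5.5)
Your proposal is correct and follows the paper's proof, which simply reruns the argument of Proposition~\ref{prop:concave_energy_constraint} with Lemma~\ref{lem:cold_deriv_equal_p} supplying the slope $-P_{\textup{zero}}$ in the supporting-line step. The paper also uses the identity $P_{m,\textup{zero}}(\tau_{m,\textup{zero}}(P_z))=P_z$ that you flag, but without comment. This identity does not follow from well-definedness of $P_{\textup{zero}}$ alone: an ideal gas mixed with a solid has $\tau_{m,\textup{zero}}\equiv 0$ for the gas while $\tau_{\textup{zero}}(\cdot,\bY)$ is still one-to-one, and then $\overline V_m=0$ with $x_m>0$. So your per-material injectivity is a genuine added hypothesis rather than a consequence, and stating it explicitly, as you do, is the right call.
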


\begin{proof}
    The proof is the same as the proof for Proposition~\ref{prop:concave_energy_constraint}, except that one invokes Lemma~\ref{lem:cold_deriv_equal_p} for the derivative identity used in the Taylor series expansion to show that the minimizer occurs at the equilibrated pressure, $P_{\text{zero}}$.
\end{proof}

\begin{theorem}[Convex PTE admissible set] \label{thm:pte_convex_set}
    The PTE admissible, $\calA_{\textup{PTE}}$ defined in Definition~\ref{def:pte_admissible_set} is convex if all EOS satisfy the 3rd law of thermodynamics or the assumptions in Lemma~\ref{lem:cold_deriv_equal_p} and Corollary~\ref{cor:concave_energy} hold for all EOS which do not satisfy the 3rd law of thermodynamics.
\end{theorem}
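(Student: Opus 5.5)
I will write $\calA_{\textup{PTE}}$ as an intersection of convex sets. The constraints $\alpha_k\rho_k \geq 0$ are closed half-spaces in $\bu$. The constraint $\rho(\bu) = \sum_m \alpha_m\rho_m > 0$ is an open half-space, since $\rho$ is linear in $\bu$. The remaining task is the energy constraint $\sfe(\bu) > e_{\textup{zero}}(\tau,\bY)$ with $\tau = \rho^{-1}$.

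On $\{\rho > 0\}$ I will multiply this constraint by $\rho$. Using $\rho Y_m = \alpha_m\rho_m$ and Definition~\ref{def:mix_cold_curve}, it becomes
\begin{equation*}
    \rho\sfe(\bu) - \sum_{m\in I(\bY)} \alpha_m\rho_m\, e_m\big(P_{\textup{zero}}(\rho^{-1},\bY),0\big) > 0 .
\end{equation*}
The left-hand side is exactly $\Psi_{\textup{zero}}(\bu)$. When every EOS satisfies the 3rd law, the zero curve coincides with the cold curve by Remark~\ref{rem:3rd_law}, so the left-hand side is also $\Psi_{\textup{cold}}(\bu)$.

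Next I will use the concavity results. By Proposition~\ref{prop:concave_energy_constraint} in the 3rd-law case, and by Corollary~\ref{cor:concave_energy} otherwise, this functional is concave on the convex set $\{\rho>0,\ \alpha_k\rho_k\ge 0\}$. The strict superlevel set $\{\Psi>0\}$ of a concave function is convex. Intersecting it with the half-spaces above shows that $\calA_{\textup{PTE}}$ is convex.

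For the mixed case, where some EOS satisfy the 3rd law and others only satisfy the hypotheses of Lemma~\ref{lem:cold_deriv_equal_p}, I plan to rerun the infimal argument of Proposition~\ref{prop:concave_energy_constraint} with a material-dependent choice. For each material I use $e_{m,\textup{cold}}$ together with Lemma~\ref{lem:cold_curve_consistency}, or $e_{m,\textup{zero}}$ together with Lemma~\ref{lem:cold_deriv_equal_p}, as appropriate. In both cases the tangent-line inequality $e_m(\tau_m)\ge e_m(\overline\tau_m) - P_{\textup{zero}}(\tau_m-\overline\tau_m)$ holds with the common equilibrium pressure $P_{\textup{zero}}$. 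Hence $\calE$ is again the infimum of a sum of convex perspectives, and so it is convex.

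The main obstacle is vanishing materials. As $\bu$ varies along a segment, $I(\bY)$ can change, so I must check that the expression defining the energy constraint is the same function throughout and not piecewise-defined. The extended-valued perspective $\Phi_m$ handles this: $\calE$ is defined uniformly on $[0,\infty)^M$, and its value agrees with the sum over $I(\bY)$. It therefore suffices to note that the identity \eqref{proof:infimum_identity} holds for every $\bY$ on the segment, including at endpoints where some $x_m = 0$.
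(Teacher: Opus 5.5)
Your proposal is correct and takes the same route as the paper. The paper's proof is the one-line observation that $\calA_{\textup{PTE}}$ is cut out by linear constraints together with positivity of the concave functional from Proposition~\ref{prop:concave_energy_constraint} or Corollary~\ref{cor:concave_energy}; your rescaling of the energy constraint by $\rho>0$, the strict-superlevel-set argument, the material-by-material use of Lemma~\ref{lem:cold_curve_consistency} or Lemma~\ref{lem:cold_deriv_equal_p} in the mixed case, and the appeal to the extended perspective $\Phi_m$ for vanishing materials make explicit what the paper leaves implicit.
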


\begin{proof}
    Since $\calA_{\text{PTE}}$ is characterized by concave functionals owing to Proposition~\ref{prop:concave_energy_constraint} and Corollary~\ref{cor:concave_energy}, then $\calA_{\text{PTE}}$ is convex.
\end{proof}

\begin{remark}[Admissible state check] \label{rem:admissible_stat_check}
    During a numerical simulation, one would like to know whether the updated solution resides in $\calA_{\text{PTE}}$. 
    Let $\bu_0 \in \Real^{M+d+1}$ be a given state.
    We wish to determine if $\bu_0 \in \calA_{\text{PTE}}$.
    Using Definition~\ref{def:P_zero_curve}, we first solve
    \begin{equation*}
        \sum_{m \in I(\bY_0)} Y_{m,0} \tau_{m,\text{zero}}(P) = \tau_0,
    \end{equation*}
    for the root $P_0 \in \calP(\bY_0)$ using a numerical algorithm, e.g. Newton-Raphson, bisection, etc.
    Then we simply assess if $e_0 \eqq \sfe(\bu_0) > e_{\text{cold}}(\tau_0, \bY_0) = \sum_{m=1}^M Y_{m,0} e_m(P_0, 0)$.
\end{remark}


\begin{remark}[Example PTE systems] \label{rem:example_pte_systems}
    In general, finding an analytic solution to the PTE system is not possible; however, there are few special cases that can be analyzed in more detail.
    The simplest case is ideal gas mixtures; see \citep{renac2021entropy, trojak2024positivity, CLAYTON2026107159}.
    In slightly more generality one can find an exact solution for PTE when there is one stiffened-gas EOS and $M-1$ ideal gases; see \citet[Sec.~4.2]{wong2022positivity} and generalizing more with the Nobel-Abel stiffened gas law in \citet{collis2026robust}.
    See also \citet{borisov2018exact} wherein they use a mixture equation of state similar to a stiffened-gas law.
    However, for multiple stiffened-gas laws, iterative methods must be used to get the PTE solution.
    Another notable case is provided in \citet{aslam_2021}, where they analyze a PTE mixture for two different Mie-Gr{\"u}neisen equations of state and derive a simple form to apply iterative methods.
\end{remark}

%% file: pte_solver.tex
\section{Tabular approximation} \label{sec:tabular_approximation}
We now present a method for constructing a tabular equation of state from some discrete set of data.
The construction process is not the sole focus of the paper; however, we illustrate one possible approximation method and discuss several issues with tabular approximations.

The tabular equation of state that we shall use consists of a set of discrete data for the density and specific internal energy of each material.
We then provide an interpolation method between the discrete data points.
There are many ways to construct tabular approximations and each method has advantages and disadvantages.
Some early work for surface approximation of an EOS can be seen in \citet{carlson1985monotone}; however, it assumes monotonicity of the data.
\citet{dilts2006consistent} covers this topic in great detail and provides a novel interpolation method which guarantees thermodynamic consistency and stability.
Alternatively, a bi-quintic interpolation method is proposed for the Helmholtz free energy in \citet{swesty1996thermodynamically}, this method guarantees thermodynamic consistency in the sense that the energy and pressure are continuous and that the Maxwell relations are preserved; see also \citet[Sec.~2]{timmes2000accuracy}.
We also refer to \citet[Sec.~3]{foll2019use} which utilizes an adaptive mesh refinement procedure to improve the tabular approximation.

We now proceed with a single simple approximation method, noting the deficiencies along the way.

\subsection{Tabular data} \label{sec:tabular_data}
In order to simplify the tabular approximation and resulting PTE solves, we tabulate each EOS with respect to pressure on the pressure domain, $\calP = (0, \infty)$.
This avoids some complexity with tabulating across different pressure domains while losing some of the generality.
However, in general, one can tabulate each EOS in its own unique way.

Let $N_p \in \polN$ denote the number of grid points for the pressure domain. 
Define $\frakP \eqq \{P_i\}_{i=0}^{N_p}$, with each $P_i\in (0, \infty)$, be an ordered collection of unique pressure values where $P_{\min} \eqq P_0 \eqq \epsilon_P$ and $P_{\max} \eqq P_{N_p}$ where $\epsilon_P > 0$. 
We refer to $\frakP$ as the \textit{pressure partition}.
Similarly, we let $N_t \in \polN$ denote the number of temperature values.
We define $\frakT \eqq \{T_j\}_{j=0}^{N_t}$ to be an ordered collection of temperature values where each $T_j > 0$.
We refer to $\frakT$ as the \textit{temperature partition} and denote $T_{\min} \eqq T_0 = \epsilon_T > 0$ to be the minimum temperature and $T_{\max} \eqq T_{N_t}$.
This rectangular domain is denoted by 
\begin{equation}
    \sfR \eqq [P_{\min}, P_{\max}] \times [T_{\min}, T_{\max}].
\end{equation}
Details of the constructions of these partitions are outlined in Section~\ref{sec:tab_approx}.
Numerically, we choose $0 < T_{\min} \ll 1$ since the 3rd law of thermodynamics indicates we can never reach absolute zero.
Note that some EOS are not well-defined at $T = 0$ as mentioned in Remark~\ref{rem:multi_valued_cold}.

We then retrieve (or construct) a tabulated set of data for each material density and material specific internal energy, $\rho_m^{i,j} \eqq \rho_m(P_i, T_j)$ and $e^{i,j}_m \eqq e_m(P_i, T_j)$, respectively, for all $m \in \intset{1}{M}$, $i \in \intset{0}{N_p}$, and $j\in\intset{0}{N_t}$.
We shall also use the specific volume quantities, $\tau^{i,j}_m \eqq 1 / \rho^{i,j}_m$.
All interpolation of thermodynamic information will be performed on these density (or specific volume) and specific internal energy tables.
It is possible that the solution to the PTE system, \eqref{eq:mass_frac_pte_system}, exists outside of the bounded domain, $[P_{\min}, P_{\max}] \times [T_{\min}, T_{\max}]$. 
In general, one must choose $\sfR$ in an \textit{a posteriori} way, to ensure that any numerical experiments stay within the prescribed domain.
That is, we do not wish to perform \textit{extrapolation} from the data set.
However, extrapolating an EOS beyond some $T_{\max}$ for which the material may behave similarly to an ideal gas, is worth investigating, but this is beyond the scope of this paper.
The domain, $\sfR$, is provided for each test problem in Section~\ref{sec:numeric_results}.

\subsection{Tabular approximation} \label{sec:tab_approx}
In order to construct our tabular equation of state, we now must define how interpolation is performed.
First, we establish some notation. 
We define the ``rectangular box'' by, $\sfR^i_j \eqq [P_i,P_{i+1}) \times [T_j, T_{j+1})$, so then $\sfR = \bigcup_{i,j} \overline{\sfR^i_j}$.
We use the standard Newton divided difference notation with respect to $P$ and $T$ for some quantity, $\sfX$.
That is, $[\sfX^{i,j}, \sfX^{i+1,j}] = \frac{\sfX^{i+1,j} - \sfX^{i,j}}{P_{i+1} - P_i}$ and $[\sfX^{i,j}, \sfX^{i,j+1}] = \frac{\sfX^{i,j+1} - \sfX^{i,j}}{T_{j+1} - T_j}$.
We also define $\Delta P_i \eqq P_{i+1} - P_i$ and $\Delta T_j \eqq T_{j+1} - T_j$.

The simplest partition one can construct is the uniform partition.
That is, $\Delta P_{i-1} = \Delta P_i$ for all $i\in\intset{1}{N_p}$.
However, EOS tend to have certain asymptotic features that could pose problems for this partition.
For example, EOS used for modeling solids have incompressible behavior and therefore $\big(\pdv{\rho}{P}\big)_T \approx 0$; especially in the more compressed regions.
Mathematically, we must have $\big(\pdv{\rho}{P}\big)_T > 0$, but finite decimal approximation in computer systems may result in $[\rho^{i,j}, \rho^{i+1,j}] = 0$ if $P_{i+1} - P_i$ is not large enough.
We must be careful in constructing the tabular approximation to avoid such issues.

An alternative pressure and temperature partition space we construct is a logarithmic one.
Since pressure values can range from extremely large to extremely small (near zero) and even to extremely negative values, a logarithmic approach can provide better approximation to the EOS for a smaller cost.
In particular,
\begin{align}
    \Delta_{\text{log}} \sfY &\eqq \frac{\log_b(\sfY_{\max}) - \log_b(\sfY_{\min})}{N_\sfY} \\
    \sfY_i &\eqq b^{\log_b(\sfY_{\min}) + i \Delta_{\text{log}}\sfY}
\end{align}
for $\sfY$ either $P$ or $T$ and where $b$ is some base; we shall use $b = 10$.
Then $\Delta \sfY_i \eqq \sfY_{i+1} - \sfY_i$.
Note however that this definition requires $\sfY_{\min} > 0$.

The tabular approximation we construct is what we describe as a, \textit{rational form approximation}.
Generically, the form is given by $f(x,y) = \frac{a + by}{c + dx}$ where $a$, $b$, $c$, and $d$ are selected to interpolate the points of interest.
We define the tabular approximation of the specific volume by,
\begin{subequations} \label{eq:rational_tau_approx}
\begin{align}
    \ttau_m(P,T) &\eqq \sum_{i,j} \ttau_m^{i,j}(P,T) \chi_{\sfR^i_j}(P,T), \\
    \ttau_m^{i,j}(P,T) &\eqq \frac{(T_{j+1} - T) / \Delta T_j}{\rho_m^{i,j} + [\rho_m^{i,j}, \rho_m^{i+1,j}](P - P_i)} + \frac{(T - T_j) / \Delta T_j}{\rho_m^{i,j+1} + [\rho_m^{i,j+1}, \rho_m^{i+1,j+1}](P - P_i)} \label{eq:tau_m_approx}
\end{align}
\end{subequations}
where $\chi_{\sfR^i_j}$ is the characteristic function.
In this case we see that each $\ttau_m^{i,j}(P,T)$ is a linear function of $T$ and a rational function of $P$.
This is motivated from the ideal gas law where $\tau = \frac{RT}{P}$.

\begin{lemma}[Approximation properties]
    The function $\ttau_m$ defined in \eqref{eq:rational_tau_approx} is $H^1$-conforming on $[P_{\min}, P_{\max}] \times [T_{\min}, T_{\max}]$.
\end{lemma}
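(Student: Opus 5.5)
The plan is to use the standard characterization of $H^1$-conformity for piecewise-smooth functions. If a function is $H^1$ on each cell of a partition of a Lipschitz domain and its traces agree across every shared interface, then it belongs to $H^1$ of the whole domain. For the present partition $\{\sfR^i_j\}$ this means continuity across the cell edges, and this is the only nontrivial point. So I would first establish smoothness inside each cell, then continuity across the edges, and then invoke the gluing result. The latter follows by integrating by parts cell by cell: the boundary terms on interior edges cancel because the traces match, so the piecewise gradient is the weak gradient.

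\emph{Smoothness inside each cell.} On the closed cell $\overline{\sfR^i_j}$ the local function $\ttau^{i,j}_m$ in \eqref{eq:tau_m_approx} is affine in $T$. Its $P$-dependence consists of two terms of the form $1/\ell(P)$, where $\ell(P) = \rho^{i,k}_m + [\rho^{i,k}_m,\rho^{i+1,k}_m](P-P_i)$ for $k\in\{j,j+1\}$. Each $\ell$ is the linear interpolant in $P$ between the positive values $\rho^{i,k}_m$ and $\rho^{i+1,k}_m$. It is therefore bounded below on $[P_i,P_{i+1}]$ by $\min(\rho^{i,k}_m,\rho^{i+1,k}_m)>0$, using only positivity of the tabulated densities. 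Hence $\ttau^{i,j}_m \in C^\infty(\overline{\sfR^i_j})$, and in particular it lies in $H^1(\sfR^i_j)$ with bounded derivatives.

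\emph{Continuity across edges.} This is the key step.
\begin{itemize}
\item On a horizontal edge $T=T_{j+1}$, $P\in[P_i,P_{i+1}]$: the cell below gives $1/(\rho^{i,j+1}_m + [\rho^{i,j+1}_m,\rho^{i+1,j+1}_m](P-P_i))$, because the first weight vanishes. The cell above, evaluated at $T=T_{j+1}$, gives the identical expression from its first term. So the traces coincide.
\item On a vertical edge $P=P_{i+1}$: the left cell's denominators evaluate to $\rho^{i+1,k}_m$ by the definition of the divided difference. The right cell's denominators at $P=P_{i+1}$ are trivially $\rho^{i+1,k}_m$. Both sides therefore reduce to the linear-in-$T$ interpolant $\tfrac{T_{j+1}-T}{\Delta T_j}\tau^{i+1,j}_m + \tfrac{T-T_j}{\Delta T_j}\tau^{i+1,j+1}_m$, so the traces agree there too.
\end{itemize}
Thus $\ttau_m$ is globally continuous on $\sfR$. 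I would also remark that the half-open cells and the characteristic functions only affect a measure-zero set, so $\ttau_m$ is a.e. equal to this continuous function.

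\emph{Conclusion.} Since $\ttau_m$ is continuous and piecewise $C^1$ on a finite rectangular partition with bounded piecewise gradients, it is Lipschitz on $\sfR$ and hence in $W^{1,\infty}(\sfR)\subset H^1(\sfR)$. The main obstacle I anticipate is only the positivity of the denominators in the first step, which the tabulated data $\rho_m^{i,j}>0$ supplies. The edge-matching is a direct substitution.
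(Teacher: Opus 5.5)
Your proposal is correct and follows the paper's route: smoothness of each rational piece on its cell, matching traces across the horizontal and vertical cell edges, and then membership in $H^1(\sfR)$. On the vertical edges you check by direct substitution where the paper appeals to uniqueness of the linear-in-$T$ interpolant. You also supply two details the paper skips: positivity of the linear denominators on $[P_i,P_{i+1}]$, which is what actually justifies smoothness on the closed cell, and the gluing/Lipschitz argument that turns piecewise smoothness plus continuity into $\ttau_m \in H^1(\sfR)$.
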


\begin{proof}
    Since $\ttau^{i,j}_m$ is a rational function, it is immediate that $\ttau^{i,j}_m \in C^\infty(\sfR^i_j)$ for each $i$, $j$.
    In order to show that $\ttau_m$ is continuous, we first note that $\ttau_m(P_i,T_j) = \tau_m^{i,j}$ for all $i \in \intset{0}{N_p-1}$ and $j \in \intset{0}{N_t-1}$.
    To see that $\ttau_m$ is continuous, we need only check that $\ttau_m$ is continuous at the cell interfaces, $\overline{\sfR^i_j} \cap \overline{\sfR^{i+1}_j}$ and $\overline{\sfR^i_j} \cap \overline{\sfR^i_{j+1}}$.
    Consider $\ttau_m^{i,j}(P_{i+1}, T)$ and $\ttau_m^{i+1,j}(P_{i+1}, T)$.
    Since both $\ttau_m^{i,j}$ and $\ttau^{i+1,j}_m$ are linear in $T$ and there exists exactly one linear function which interpolates two points, we must have that $\ttau_m^{i,j}(P_{i+1}, T) = \ttau_m^{i+1,j}(P_{i+1}, T)$.
    Similarly, for $\overline{\sfR^i_j} \cap \overline{\sfR^i_{j+1}}$, by construction in \eqref{eq:tau_m_approx}, we have that $\ttau_m^{i,j}(P, T_{j+1}) = \ttau_m^{i,j+1}(P, T_{j+1})$.
    Lastly, the derivatives are all continuous on the interior of each cell, $\sfR^i_j$.
    Therefore, $\ttau_m \in H^1(\sfR)$.
\end{proof}

For completeness, we also provide the partial derivatives which are required for the numerical algorithms,
\begin{equation}
\begin{split}
    \Big( \pdv{\ttau_m^{i,j}}{P} \Big)_T &= -\frac{[\rho_m^{i,j}, \rho_m^{i+1,j}] (T_{j+1} - T) / \Delta T_j}{\big(\rho_m^{i,j} + [\rho_m^{i,j}, \rho_m^{i+1,j}](P - P_i)\big)^2} \\
    &\qquad - \frac{[\rho_m^{i,j+1}, \rho_m^{i+1,j+1}](T - T_j) / \Delta T_j}{\big(\rho_m^{i,j+1} + [\rho_m^{i,j+1}, \rho_m^{i+1,j+1}](P - P_i)\big)^2}
\end{split}
\end{equation}
and
\begin{equation}
\begin{split}
    \Big( \pdv{\ttau_m^{i,j}}{T} \Big)_P &= \frac{-1 / \Delta T_j}{\rho_m^{i,j} + [\rho_m^{i,j}, \rho_m^{i+1,j}](P - P_i)} \\
    &\qquad + \frac{1 / \Delta T_j}{\rho_m^{i,j+1} + [\rho_m^{i,j+1}, \rho_m^{i+1,j+1}](P - P_i)}.
\end{split}
\end{equation}
For the interpolation of the specific internal energy, we use a form which depends on the approximation $\ttau_m^{i,j}$; the reason for this is explained in Remark~\ref{rem:tab_inversion}.
The approximation is given by,
\begin{equation} \label{eq:sie_rational_approx}
\begin{split}
    \te_m^{i,j}(P,T) &= \frac{T_{j+1} - T}{\Delta T_j} \Big( e^{i,j}_m + \frac{e_m^{i+1,j} - e_m^{i,j}}{\tau^{i+1,j}_m - \tau^{i,j}_m} (\ttau_m(P, T_j) - \tau^{i,j}_m) \Big) \\
    &\qquad + \frac{T - T_j}{\Delta T_j} \Big( e^{i,j+1}_m + \frac{e_m^{i+1,j+1} - e_m^{i,j+1}}{\tau^{i+1,j+1}_m - \tau^{i,j+1}_m} (\ttau_m(P, T_{j+1}) - \tau^{i,j+1}_m) \Big),
\end{split}
\end{equation}
and the corresponding derivatives are,
\begin{equation}
\begin{split}
    \Big(\pdv{\te_m^{i,j}}{P}\Big)_T &= \frac{T_{j+1} - T}{\Delta T_j} \cdot \frac{e_m^{i+1,j} - e_m^{i,j}}{\tau^{i+1,j}_m - \tau^{i,j}_m} \Big( \pdv{\ttau_m}{P} \Big)_T(P, T_j) \\ 
    &\qquad + \frac{T - T_j}{\Delta T_j} \frac{e_m^{i+1,j+1} - e_m^{i,j+1}}{\tau^{i+1,j+1}_m - \tau^{i,j+1}_m} \Big( \pdv{\ttau_m}{P} \Big)_T(P, T_{j+1})
\end{split}
\end{equation}
and
\begin{equation}
\begin{split}
    \Big( \pdv{\te_m}{T} \Big)_P = \frac{e^{i,j+1}_m - e^{i,j}_m}{\Delta T_j} 
    &+ \frac{1}{\Delta T_j} \Big( \frac{e^{i+1,j+1}_m - e^{i,j+1}}{\tau^{i+1,j+1}_m - \tau^{i,j+1}_m} (\ttau_m(P,T_{j+1}) - \tau^{i,j+1}_m) \\
    &\qquad - \frac{e^{i+1,j}_m - e^{i,j}_m}{\tau^{i+1,j}_m - \tau^{i,j}_m}(\ttau_m(P,T_j) - \tau^{i,j}_m) \Big).
\end{split}
\end{equation}

Finally, the bulk approximate quantities are defined by,
\begin{subequations}
\begin{align}
    \ttau(P, T, \bY) &\eqq \sum_{m=1}^M Y_m \ttau_m(P, T), \\
    \te(P, T, \bY) &\eqq \sum_{m=1}^M Y_m \te_m(P, T).
\end{align}
\end{subequations}

Note, we make no claims about thermodynamic consistency or stability of this tabular approximation, only that the EOS can be properly inverted, see Remark~\ref{rem:tab_inversion}.
In our numerical experiments in Section~\ref{sec:numeric_results}, we have not found any violation of thermodynamic stability.

\begin{remark}[EOS inversion] \label{rem:tab_inversion}
    The explicit use of $\ttau_m$ in \eqref{eq:sie_rational_approx} is necessary to preserve the invertability of the equation of state.
    For example, assume we construct $\ttau_m$ as in \eqref{eq:tau_m_approx} but we instead construct $\widetilde{\te}_m(P,T)$ independently of $\ttau_m$ by some other means.
    Then, if we provide EOS data of the form $(\tau_0, e_0, T_0)$.
    Let $P_0$ be the root of the equation $\ttau_m(P, T_0) = \tau_0$ and $P_0'$ the root of $\widetilde{\te}_m(P, T_0) = e_0$, then in general $P_0 \neq P_0'$.
    Hence the equation of state for pressure is \textit{inconsistent}!
\end{remark}

\begin{remark}[Real data]
    The constructions we have provided are somewhat simple since we define a single local interpolation method from some predefined data. 
    In general, for a global tabular EOS, constructing a highly accurate approximation is incredibly complex.
    The process consists of a variety of techniques, from curve fitting experimental data to applications of density functional theory (DFT) among a host of other methods.
    See \citet{sjostrom2016multiphase} and \citet{rehn2021multiphase} for more details of these constructions.
\end{remark}

\section{Pressure-temperature equilibrium preliminaries} \label{sec:pte_solving}

We now proceed by establishing a few preliminaries as well as some standard numerical methods for solving nonlinear systems of equations.
We drop the $\, \widetilde{} \,$ notation used in Section~\ref{sec:tab_approx} since there is no requirement that the equations of state must be tabulated.
First, recall the system we are interested in solving is:
\begin{subequations}
\begin{align}
    \sum_{m=1}^M Y_{m,0} \tau_m(P,T) - \tau_0 &= 0, \\
    \sum_{m=1}^M Y_{m,0} e_m(P,T) - e_0 &= 0.
\end{align}
\end{subequations}
We shall transform this system into a more useful form.
Since $\lim_{P \to \calP^+} \tau(P,T) \to \infty$, Newton methods may converge slowly when near the asymptotic regime, $\tau \gg 1$.
A better approach is to instead solve the equation,
\begin{equation} \label{eq:density_pte}
    \varrho(P,T) \eqq \frac{1}{\sum_{m=1}^M Y_{m,0} \tau_m(P,T)} - \frac{1}{\tau_0},
\end{equation}
since the density behaves more linearly with respect to $P$.
See Figure~\ref{fig:density_spc_volume_iso} for a comparison of some generic functions.

\begin{figure}[htbp]
\centering
\begin{tikzpicture}
    \begin{axis}[
      width=10cm, height=7cm,
      xlabel={\(P\)},
      title={Density vs. specific volume isotherms},
      legend pos=north east,
      legend cell align=left,
      grid=major,
    ]
        \addplot[domain=0:10,samples=200,thick,dashed,blue] {x^2};
        \addplot[domain=0.1:10,samples=200,thick,dotted,red] {1/x^2};
        \addlegendentry{$\rho$}
        \addlegendentry{$\tau$}
    \end{axis}
\end{tikzpicture}
\caption{Comparison of generic isotherms for density and specific volume.
Note the behavior of $\rho$ is more suitable for Newton solves compared to $\tau$.}
\label{fig:density_spc_volume_iso}
\end{figure}

We also have that $\partial_P \varrho = \rho K_T > 0$. 
Furthermore, $\partial_P^2 \varrho = \partial_P(1 / c_T^2) = -\frac{2}{c_T^3} \big(\pdv{c_T}{P}\big)_T$, where $c_T^2 = 1 / (\rho K_T)$ is the isothermal sound speed. 
Hence, if $c_T$ is an increasing function of $P$ on each isotherm, then $\varrho$ is an increasing concave function of $P$.
For $e(P,T)$, there is typically non-monotonic behavior in either of the variables, $e(P,T)$ may be increasing or decreasing with respect to each variable.
Therefore, we instead opt to solve a specific enthalpy equation for the temperature; that is,
\begin{equation} \label{eq:enthalpy_pte}
    \frakh(P,T) \eqq \sum_{m=1}^M Y_{m,0} \big(e_m(P,T) + P \tau_m(P,T)\big) - (e_0 + P \tau_0) = 0.
\end{equation}
Notice that $\partial_T \frakh(P,T) = \partial_T h(P,T) = c_p > 0$.

\subsection{Stopping criteria}
Handling the convergence of the iterative methods requires a bit of care.
The specific internal energy, like the pressure, can range across many different scales including negative values.
The specific enthalpy, $h$, also maintains this same behavior.
We use the following relative error stopping criteria for the iterative methods,
\begin{align}
    |\tau(P, T) - \tau_0| &< \epsilon \tau_0, \label{eq:tau_error_indicator} \\
    |e(P, T) - e_0| &< \epsilon (e_{\text{ref}} + |e_0|), \label{eq:h_error_indicator}
\end{align}
where $\epsilon > 0$ is a unitless parameter typically around $10^{-12}$ and $e_{\text{ref}} > 0$ is some reference specific internal energy.
Since the specific internal energy can be identically zero, we incorporate the $e_\text{ref}$ to avoid this degeneracy.

\subsection{Single-Newton full-bisection method}
Let $(P^{(0)}, T^{(0)}) \in \calP \times \Real_+$ be an initial guess, we first perform a full bisection method on the temperature to find the equilibrated temperature at the initial $P^{(0)}$.
That is, we find $T^{(1)}$ which solves $e(P^{(0)},T^{(1)}) - e_0 = 0$.
We then apply one step of a standard Newton method to the equation $\varrho(P,T^{(1)}) = 0$ to find $P^{(1)}$.
This process is repeated until convergence is met.
The specific algorithm for this method is provided in Algorithm~\ref{alg:split_newton_PTE_solver}.

\begin{algorithm}
\caption{Single-Newton full-bisection method.}
\label{alg:split_newton_PTE_solver}
\begin{algorithmic}
    \Require $e_0$, $\tau_0$, $\bY_0$, $(P^{(0)}, T^{(0)})$
    \State \textbf{set:} $n = 0$
    \While{(\eqref{eq:tau_error_indicator} \textbf{and} \eqref{eq:h_error_indicator} \textbf{false})}
        \State $T^{(n+1)} = \texttt{Bisection}(e(P^{(n)}, T) - e_0)$ \Comment{Run bisection to convergence}
        \State $P^{(n+1)} \gets P^{(n)} - \frac{\varrho(P^{(n)}, T^{(n+1)})}{\partial_P \varrho(P^{(n)}, T^{(n+1)})}$  \Comment{one Newton step}
        \If{$P^{(n+1)} < P_{\min}$ \textbf{or} $P^{(n+1)} > P_{\max}$}
            \State $P^{(n+1)} \gets \max(P_{\min}, P^{(n+1)})$ \Comment{Catch for out of bounds Newton step}
            \State $P^{(n+1)} \gets \min(P_{\max}, P^{(n+1)})$
        \EndIf
        \State $n \gets n + 1$
    \EndWhile  
    \State \Return{$(P^{(n+1)}, T^{(n+1)})$}
\end{algorithmic}
\end{algorithm}

\subsection{2D Newton method}
One of the more common numerical methods for solving a system of nonlinear equations is the 2D Newton method.
The convergence can be very rapid; however, the method requires a good initial guess and is often supplemented with other fail-safes to improve its robustness.
Let,
\begin{equation}
    \bsfE(P,T) \eqq \begin{pmatrix}
        \varrho(P,T) \\
        e(P,T) - e_0
    \end{pmatrix}
\end{equation}
then the Jacobian is,
\begin{equation}
    D\bsfE(P,T) \eqq \begin{bmatrix}
        \rho K_T & -\rho \beta \\
        P\tau K_T - \beta \tau T & c_p - P \tau \beta
    \end{bmatrix} = 
    \begin{bmatrix}
        \big(\pdv{\rho}{P}\big)_T & \big(\pdv{\rho}{T}\big)_P \\
        \big(\pdv{e}{P}\big)_T & \big(\pdv{e}{T}\big)_P
    \end{bmatrix}.
\end{equation}
Then using \eqref{eq:important_thermo_iden} we find that $\det(D\bsfE(P,T)) = \rho c_v K_T > 0$.
The 2D Newton method is described by,
\begin{equation} \label{eq:2dnewton_pte_update}
    \begin{pmatrix}
        P^{(n+1)} \\ T^{(n+1)}
    \end{pmatrix}
    = 
    \begin{pmatrix}
        P^{(n)} \\ T^{(n)}
    \end{pmatrix}
    - \frac{1}{\rho c_v K_T} \begin{bmatrix}
        c_p - P\tau \beta & \rho \beta \\
        \tau (\beta T - P K_T) & \rho K_T
    \end{bmatrix}
    \bsfE(P^{(n)}, T^{(n)}).
\end{equation}
Note however, it is more computationally efficient to use the derivatives of $\rho$ and $e$ rather than the named thermodynamic derivatives: $c_v$, $c_p$, $K_T$, and $\beta$.

\begin{remark}[Global convergence]
    Global convergence of the Newton method is studied in \citet{more1971global}, and sufficient conditions are provided which guarantee convergence.
    However, all of these assumptions require the function, $\bsfE$, to be $C^\infty$ and convex.
    This assumption is certainly not true for this tabular equation of state mixture and as we see in Section~\ref{sec:numeric_results}, the initial guess can have a much more limited domain for convergence.
\end{remark}

\subsubsection{Damped Newton method}
It is common in iterative solvers to apply some sort of ``safeguard'' or ``half measure'' to improve the robustness of the solver.
We have implemented a damped Newton method using an Armijo-type backtracking line search to find the Newton step which decreases the residual, $\Vert \bsfE(P,T) \Vert^2_{\ell^2}$.
The generic form is provided by,
\begin{equation}
    \bx^{(n+1)}_k = \bx^{(n)} - \alpha^k (D\bF(\bx^{(n)}))^{-1} \bF(\bx^{(n)}), \quad k \in \intset{0}{K}.
\end{equation}
where the next iterate is selected as the first $k$ which satisfies 
\begin{equation}
    \Vert \bF(\bx^{(n+1)}_k) \Vert_{\ell^2} < (1 - c\alpha^k) \Vert \bF(\bx^{(n)}) \Vert_{\ell^2}.
\end{equation}
The PTE notation is simply $\bF = \bsfE$ and $\bx^{(n)} = (P^{(n)}, T^{(n)})^{\sfT}$.
(Note the $k$ in $\alpha^k$ signifies an exponent and \textit{not} an index.)
For all numerical results in Section~\ref{sec:numeric_results}, we have used $\alpha = 1/2$, $c = 10^{-4}$, and $K = 4$.
If none of the $k \in \intset{0}{K}$ decreases the residual sufficiently, then we set $\bx^{(n+1)} \eqq \bx^{(n+1)}_0$.
Note, we cap the back-tracking at $K=5$, since the EOS evaluations are generally expensive.

%% file: cyclic_method.tex
\section{The cyclic method} \label{sec:the_cyclic_method}
While the 2D Newton method converges at a quadratic rate, it may fail to converge unless the initial guess is close enough to the root.
Alternatively, the Newton-bisection method typically converges even when the initial guess is far away from the solution; however, the computational time is quite large.
Solving PTE is just one part of solving the four-equation model, therefore the iterative solver should converge rapidly and accurately, since there could be billions of PTE solves at every time step.

There have been many proposed methods to improve the robustness of standard iterative solvers, such as using a bisection method to find a safe starting point for the iterative method; see \citet{moore1977safe}, or the trust region method (see \citet{sorensen1982newton} and \citet{yuan2015recent}) which solves a safer subproblem (typically a minimization problem) to obtain the next iterate.
We propose a novel and efficient method for solving nonlinear systems of equations from $\Real^2$ to $\Real^2$ that converges rapidly and is often more robust to the initial guess than standard Newton methods.
To the best of the author's knowledge, such an algorithm has not appeared in the literature.

\subsection{The general form}
We first present the method in a generic form solving, $\bF(\bx) = \ba$, where $\bF : \Real^2 \to \Real^2$ is some nonlinear function and $\ba = (a,b)^\sfT \in \Real^2$ is the initial datum. 
To be more explicit, we would like to solve the system:
\begin{subequations}
\begin{align}
    f(x,y) = a, \\
    g(x,y) = b.
\end{align}
\end{subequations}
for $f, g : \Real^2 \to \Real$, some nonlinear functions where $\bF(\bx) \eqq (f(\bx), g(\bx))^\sfT$ and $\bx = (x,y)$.
The method we propose, takes advantage of the \textit{cyclic rule} (also known as the \textit{triple product rule}), see Corollary~\ref{cor:cyclic_rule}.

\subsection{The cyclic method algorithm} \label{sec:cyclic_method_general}
We now describe the algorithm for the cyclic method.
For a quick illustration of the procedure see Figure~\ref{fig:cyclic_method_illustration}.
We define the following 1-dimensional solution manifolds in the $(x,y)$-plane:
\begin{align}
    \frakF_0 &\eqq \{ (x, y) : f(x,y) = 0\}, \\
    \frakG_0 &\eqq \{ (x, y) : g(x,y) = 0\}.   
\end{align}
To simplify upcoming notation, we also define the lines in the $x$-$y$ plane by,
\begin{subequations}
\begin{align}
    \sfI_\sfX^y(x_0, y_0) &\eqq \{(x,y) : y = y_0 + \Big(\pdv{y}{x}\Big)_\sfX(x_0, y_0)(x - x_0)\}, \\
    \sfI_\sfX^x(x_0, y_0) &\eqq \{(x,y) : x = x_0 + \Big(\pdv{x}{y}\Big)_\sfX(x_0, y_0)(y - y_0)\}.
\end{align}
\end{subequations}
for $\sfX$ some fixed quantity.
A general outline of the method is now provided.
Let $(x_0, y_0)$ be an initial guess, then:
\begin{enumerate}
    \item Take a Newton step in the $x$-direction for $f(x,y) = 0$ from the point $(x_0, y_0)$ to find a point $(\tx_0, y_0)$.
    \item Construct the tangent line, $\sfI_f^x(\tx_0, y_0)$, to the curve $\frakF_{f(\tx_0, y_0)}$ using the cyclic rule (Corollary~\ref{cor:cyclic_rule}).
    \item Take a Newton step in the $y$-direction for $g(x,y) = 0$ from the point $(\tx_0, y_0)$ to find a point $(\tx_0, \ty_0)$.
    \item Construct the tangent line, $\sfI_g^y(\tx_0, \ty_0)$, to the curve $\frakG_{g(\tx_0, \ty_0)}$ using the cyclic rule (Corollary~\ref{cor:cyclic_rule}).
    \item Find the intersection point, $(x_1, y_1)$, of $\sfI_f^x(\tx_0, y_0)$ and $\sfI_g^y(\tx_0, \ty_0)$.
\end{enumerate}

We now provide more explicit details on the construction of the tangent lines.
After the first Newton step, we obtain the point $(\tx_0, y_0)$.
The slopes of the tangent lines are provided by the cyclic rule, respectively as,
\begin{equation} \label{eq:cyclic_formula}
    \Big(\pdv{x}{y}\Big)_f(\tx_0, y_0) = - \frac{\big(\pdv{f}{y}\big)_x(\tx_0, y_0)}{\big(\pdv{f}{x}\big)_y(\tx_0, y_0)} \quad \text{ and } \quad \Big( \pdv{y}{x} \Big)_g(\tx_0, \ty_0) = -\frac{\big(\pdv{g}{x}\big)_y(\tx_0, \ty_0)}{\big(\pdv{g}{y}\big)_x(\tx_0, \ty_0)}.
\end{equation}
The intersection point of the two tangent lines, $\sfI^x_f(\tx_0, y_0) \cap \sfI^y_g(\tx_0, \ty_0)$, is given by
\begin{subequations}
\label{eq:line_intersec_pts}
\begin{align}
    x_1 &\eqq \frac{
        \tx_0 + \big( \pdv{x}{y} \big)_f(\tx_0, y_0) \big(\ty_0 - y_0 - \tx_0 \big( \pdv{y}{x} \big)_g(\tx_0, \ty_0) \big)
    }{
        1 - \big( \pdv{x}{y}\big)_f(\tx_0, y_0) \big( \pdv{y}{x} \big)_g(\tx_0, \ty_0)
    }, \label{eq:slope_intersection_x} \\
    y_1 &\eqq \ty_0 + \Big( \pdv{y}{x} \Big)_g(\tx_0, \ty_0) (x_1 - \tx_0).
\end{align}
\end{subequations}
If the denominator is zero, this implies that the two tangent lines are parallel, therefore, the application of this method should assess the existence of the intersection point.
This process can be iterated until convergence and the concise description is provided in Algorithm~\ref{alg:cyclic_method}.
\begin{figure}
    \centering
    \includegraphics[width=0.95\linewidth]{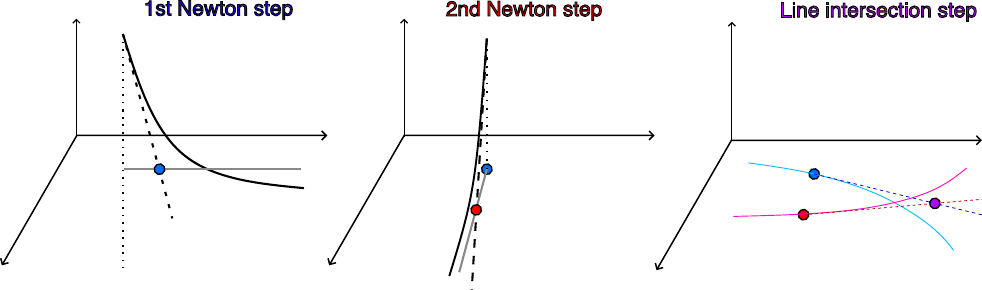}
    \caption{The three steps of the new method.}
    \label{fig:cyclic_method_illustration}
\end{figure}
\begin{remark}[Change of variables]
This method can be generalized even more and tailored to the problem of interest.
In particular, the derivatives in the $x$-$y$ plane do not require $f$ and $g$ to be constant.
We can instead choose some other quantity dependent on $f$ and $g$.
For example, one could define mappings, $p = p(f,g,x,y)$ and $q = q(f,g,x,y)$, and apply standard calculus techniques to derive $\big(\pdv{x}{y}\big)_p$ and $\big( \pdv{y}{x} \big)_q$ for use in the cyclic algorithm (assuming these alternative tangent lines take is towards the solution).
This allows us to avoid possible singularities in the derivatives and provide possible other improvements in convergence rates and robustness.
This change in derivatives is used in the PTE application in Section~\ref{sec:cyclic_pte_app}.
\end{remark}

\begin{algorithm}
\caption{Generic cyclic method.}
\label{alg:cyclic_method}
\begin{algorithmic}
    \Require $x_0$, $y_0$
    \State \textbf{set} $n = 0$
    \While{(\eqref{eq:tau_error_indicator} \textbf{and} \eqref{eq:h_error_indicator} \textbf{false})}
        \State $\tx_{n} = x_n - \frac{f(x_n,y_n)}{\partial_x f(x_n, y_n)}$ \Comment{Newton step}
        \State \textbf{compute} $\big(\pdv{x}{y}\big)_f(\tx_{n}, y_n)$ from \eqref{eq:cyclic_formula}
        \State $\ty_{n} = y_n - \frac{g(\tx_{n}, y_n)}{\partial_y g(\tx_{n}, y_n)}$ \Comment{Newton step}
        \State \textbf{compute} $\big(\pdv{y}{x}\big)_g(\tx_{n}, \ty_{n})$ from \eqref{eq:cyclic_formula}
        \State \textbf{compute} $(x_{n+1}, y_{n+1})$ from \eqref{eq:line_intersec_pts}  \Comment{Intersection of lines}
        \State $n \gets n+1$
    \EndWhile
    \State \Return{$(x_n, y_n)$}
\end{algorithmic}
\end{algorithm}

\subsection{Application of the cyclic method to PTE} \label{sec:cyclic_pte_app}
We now proceed with the application of the method presented in Section~\ref{sec:cyclic_method_general} to the PTE system formulated by \eqref{eq:density_pte} and \eqref{eq:enthalpy_pte}.
All calculations are the same as in Section~\ref{sec:cyclic_method_general} but with $(x,y)$ replaced with $(P,T)$, and $f$ and $g$ replaced with $\varrho$ and $\frakh$.
For reference we have the following thermodynamic identities from the cyclic rule:
\begin{align}
    \Big( \pdv{P}{T} \Big)_\varrho &= -\frac{\big(\pdv{\varrho}{T}\big)_P}{\big(\pdv{\varrho}{P}\big)_T} = \frac{\beta}{K_T}, \label{eq:dpdT_rho} \\
    \Big( \pdv{T}{P} \Big)_\frakh &= -\frac{\big(\pdv{\frakh}{P}\big)_T}{\big(\pdv{\frakh}{T}\big)_P} = \frac{\beta T \tau + \tau_0 - \tau}{c_p}, \label{eq:dTdp_h}
\end{align}
and we shall also use the identity, $\big( \pdv{T}{P} \big)_e = -\big(\pdv{e}{P}\big)_T/\big( \pdv{e}{T} \big)_P$.
In particular, if the line intersection step lands out of bounds, then we swap the tangent line slope, $\big( \pdv{T}{P} \big)_\frakh$ with $\big( \pdv{T}{P} \big)_e$.
We have found that this improves the robustness of the cyclic method.
Note, each of these derivatives are finite real valued numbers since $c_p > 0$ and $K_T > 0$.
There are two possible issues, which is that the tangent lines could be parallel or $\big(\pdv{e}{T}\big)_P = 0$; if this is the case, then we skip the line intersection step and proceed with the one dimensional Newton steps.
Although, in all of our numerical tests, this condition was never triggered.
This algorithm is described in Algorithm~\ref{alg:pte_method}.

Notice also that when the solution is near the root; that is, $\tau \approx \tau_0$ and $e \approx e_0$, we have that $1 - \big(\pdv{P}{T}\big)_\varrho \big( \pdv{T}{P} \big)_\frakh \approx 1 - \frac{\beta^2 T \tau}{K_T c_p} = \frac{c_v}{c_p} > 0$.
Hence we expect that the tangent lines should never be parallel sufficiently close to the root.
\begin{algorithm}
\caption{PTE cyclic method.}
\label{alg:pte_method}
\begin{algorithmic}
    \Require $P^{(0)}$, $T^{(0)}$
    \State \textbf{set} $n = 0$
    \While{(\eqref{eq:tau_error_indicator} \textbf{and} \eqref{eq:h_error_indicator} \textbf{false})}
        \State $\tP^{(n)} = P^{(n)} - \frac{\varrho(P^{(n)},T^{(n)})}{\partial_P \varrho(P^{(n)}, T^{(n)})}$ \Comment{Newton step}
        \State $\tT^{(n)} = T^{(n)} - \frac{\frakh(\tP^{(n)}, T^{(n)})}{\partial_T \frakh(\tP^{(n)}, T^{(n)})}$ \Comment{Newton step}
            \State $(P^{(n+1)}, T^{(n+1)}) \gets \sfI_\varrho^P(\tP^{(n)}, T^{(n)}) \cap \sfI_\frakh^T(\tP^{(n)}, \tT^{(n)})$ \Comment{Line intersection}
        \If{$(P^{(n+1)}, T^{(n+1)}) \not \in \sfR$ \textbf{or} \texttt{isnan}($P^{(n+1)}$)}
            \State $(P^{(n+1)}, T^{(n+1)}) \gets \sfI_\varrho^P(\tP^{(n)}, T^{(n)}) \cap \sfI_e^T(\tP^{(n)}, \tT^{(n)})$ \Comment{Swap tangent line}
            \State $P^{(n+1)} \gets \min(\max(P^{(n+1)}, P_{\min}), P_{\max})$
            \State $T^{(n+1)} \gets \min(\max(T^{(n+1)}, T_{\min}), T_{\max})$
        \EndIf
        \If{\texttt{isnan}($P^{(n+1)}$)}
            \State $(P^{(n+1)}, T^{(n+1)}) \gets (\tP^{(n)}, \tT^{(n)})$ \Comment{Safety fallback}
        \EndIf
        \State $n \gets n+1$
    \EndWhile
    \State \Return{$(P^{(n)}, T^{(n)})$}
\end{algorithmic}
\end{algorithm}
%
%

%% file: eos.tex
\section{Equation of state} \label{sec:eos}
We describe several equations of state which will be used in the testing of the PTE solver.
We provide only the necessary details in order to perform the PTE solve.

\subsection{Ideal and stiffened gas}
The ideal and stiffened gas laws are provided by,
\begin{align}
    \rho_{\text{ideal}}(P,T) = \frac{P}{RT}, \quad &\text{ and } \quad e_{\text{ideal}}(P,T) = c_v T \\
    \rho_{\text{stiff}}(P,T) = \frac{P+P_\infty}{RT}, \quad &\text{ and } \quad e_{\text{stiff}}(P,T) = \frac{c_v T (P + \gamma P_\infty)}{P + P_\infty} + q
\end{align}
where $P_\infty > 0$, $q \in \Real$, $\gamma = c_p / c_v$ and $R = c_p - c_v$ for $c_p > c_v > 0$.
The parameters we use for ideal and stiffened gas are provided in Table~\ref{tab:ideal_stiff_gas_params}.
\begin{table}[]
    \centering
    \begin{tabular}{|c|c|c|c|c|}
    \hline
     & $c_p$ $(\SI{}{\kilo\joule\per\gram\per\kelvin})$ & $c_v$ $(\SI{}{\kilo\joule\per\gram\per\kelvin})$ & $P_\infty$ $(\SI{}{\giga\pascal})$ & $q$ $(\SI{}{\kilo\joule\per\gram})$ \\
    \hline
    Ideal gas & 0.0014 & 0.001 & - & -  \\
    \hline
    Stiffened gas & 0.0085 & 0.0012 & 0.01 & 0 \\
    \hline
    \end{tabular}
    \caption{Parameters for the ideal and stiffened gas EOS.}
    \label{tab:ideal_stiff_gas_params}
\end{table}

\subsection{Reactant and product Davis EOS}
PTE is often assumed among products and reactants in the context of high explosives detonation due to it's relative simplicity, despite the PTE assumption not being generally accurate on the detonation timescale.
This is discussed in \citet{saurel_2024} and references therein.
This motivates testing with equations of state used in HE products and reactants.
In particular, we shall use the reactant Davis and product Davis equations of state.
We follow the descriptions provided in \citet{velizhanin2023notes} and use the parameters from \citet[Tables~I. and II.]{jadrich2023uncertainty}.

The reactant Davis pressure and specific internal energy are given by,
\begin{align}
    e(\tau, T) &= e_s(\tau) + \frac{c_v^0 T_s(\tau)}{1 + \alpha_{ST}} \Big[ \Big( \frac{T}{T_s(\tau)} \Big)^{1 + \alpha_{ST}} - 1 \Big], \\
    P(\tau, T) &= P_s(\tau) + \frac{\Gamma(\tau)}{\tau} \frac{c_v^0 T_s(\tau)}{1 + \alpha_{ST}} \Big[ \Big( \frac{T}{T_s(\tau)} \Big)^{1 + \alpha_{ST}} - 1 \Big]
\end{align}
where 
\begin{subequations}
\begin{align}
    T_s(\tau) &= T_0 \begin{cases}
        \big( \frac{\tau}{\tau_0} \big)^{-\Gamma_0}, &\quad \text{ if } \tau > \tau_0, \\
        \big( \frac{\tau}{\tau_0} \big)^{-(\Gamma_0 + Z)} \exp\big(-Z \big(1 - \frac{\tau}{\tau_0} \big) \big), &\quad \text{ if } \tau \leq \tau_0,
    \end{cases} \\
    e_s(\tau) &= e_0 + \frac{A^2}{16B^2} \begin{cases}
        \exp(4By) - 4By - 1, &\, \text{ if } \tau > \tau_0, \\
        \sum_{n=2}^4 \frac{(4By)^n}{n!} + C \frac{(4By)^5}{5!} + \frac{4By^3}{3(1 - y)^3}, &\, \text{ if } \tau \leq \tau_0,
    \end{cases} \label{eq:davis_eos_e_ref} \\
    P_s(\tau) &= \frac{A^2}{4B\tau_0} \begin{cases}
        \exp(4By) - 1, &\, \text{ if } \tau > \tau_0, \\
        \sum_{n=1}^3 \frac{(4By)^n}{n!} + C \frac{(4By)^4}{4!} + \frac{y^2}{(1 - y)^4}, &\, \text{ if } \tau \leq \tau_0,
    \end{cases} \label{eq:davis_eos_p_ref} \\
    \Gamma(\tau) &= \begin{cases}
        \Gamma_0, &\, \text{ if } \tau > \tau_0, \\
        \Gamma_0 + Zy, &\,  \text{ if } \tau \leq \tau_0,
    \end{cases} \label{eq:davis_eos_gruneisen}
\end{align}
\end{subequations}
with $y \eqq 1 - \frac{\tau}{\tau_0}$.

The product Davis EOS is provided by,
\begin{subequations}
\begin{align}
    P(\tau, e) &= (\gamma - 1 + F(\tau)) \frac{e}{\tau} - \frac{bF(\tau)}{\tau} (e - e_s(\tau)), \\
    F(\tau) &= \frac{2a}{\big( \frac{\tau}{\tau_c} \big)^{2n} + 1}, \\
    e_s(\tau) &= e_c \Big( \frac{\tau}{\tau_c} \Big)^{-(\gamma-1+2a)} \Big( \frac12 \Big( \frac{\tau}{\tau_c} \Big)^{2n} + \frac12 \Big)^{a/n},
\end{align}
\end{subequations}
where $\gamma > 1$ and $\tau_c, a, b, n > 0$ are some parameters for modeling the products.
For the thermal part we have,
\begin{subequations}
\begin{align}
    e(\tau, T) &= c_v (T - T_s(\tau)) + e_s(\tau), \\
    T_s(\tau) &= T_c \Big( \frac{\tau}{\tau_c} \Big)^{-(\gamma-1+2a(1-b))} \Big( \frac12 \Big( \frac{\tau}{\tau_c} \Big)^{2n} + \frac12 \Big)^{a(1-b)/n},
\end{align}
\end{subequations}
where $c_v > 0$ is a constant specific heat capacity (at constant volume) and $T_c > 0$.
For both the reactant and product Davis EOS, we compute the density by solving the nonlinear equation $P(\tau, e(\tau,T_j)) = P_i$ for $\tau_{ij}$.
Then the energy is computed by $e_{ij} \eqq e(v_{ij}, T_j)$.

\subsection{Simple MACAW}
Lastly, we use the Simple MACAW EOS found in \citet{aslam2024simple} with the parameters for copper described in \citep[Table 1]{aslam2024simple}. 
This EOS is a simplified equation of state of the one presented in \citet{lozano2023analytic}.
We omit the details as they are presented concisely in \citep{aslam2024simple}.

%% file: numerical_results.tex
\section{Numerical results} \label{sec:numeric_results}
We deploy a series of tests to determine the efficiency and accuracy of the methods for a variety of different EOS.
All tests were performed on a MacBook Pro using the Apple M2 Max with 32GB of RAM.

\subsection{Preliminaries}
The numerical tests were capped at 100 steps for each method, where ``steps'' are defined in the following way for each method.
A step for the cyclic method consists of the two one-dimensional Newton steps with tangent line intersection.
A step for the Newton-bisection method consists of the full bisection algorithm in temperature with the single one-dimensional Newton step.
Lastly, a step for the 2D Newton method is simply the one update provided by \eqref{eq:2dnewton_pte_update}.
All EOS are approximated with 350 grid points in the pressure and temperature partitions and are distributed logarithmically; see Section~\ref{sec:tab_approx}.
Lastly, we use $e_{\text{ref}} = \SI{0.2}{\kilo\joule\per\gram}$ for all tests.

\subsection{Random data trials}
In order to demonstrate the robustness and efficiency of the new method, we generate random initial data $(P_0, T_0, \bY_0) \in \sfR \times \Delta$ and compute $e_0$ and $v_0$ from $(P_0, T_0, \bY_0)$. 
Then we generate a random initial guess $(P_\text{guess}, T_\text{guess}) \in \sfR$.
All random data is generated using the standard C++ library implementation of the 19937 Mersenne Twister (see \citet{matsumoto1998mersenne}), \texttt{std::mt19937}, with a logarithmic distribution.
If an iterative method exceeds this threshold then the test is flagged as a failure.
All tests were run with 10,000 trials each.
The pressure and temperature residuals were calculated by $|P - P_0| / |P_0|$ and $|T - T_0| / T_0$, respectively.

\subsubsection{Test 1} For the first test, we use the ideal gas and stiffened gas EOS.
The pressure-temperature domain is $\sfR = [\SI{e-9}{\giga\pascal}, \SI{e3}{\giga\pascal}] \times [\SI{0.001}{\kelvin}, \SI{e5}{\kelvin}]$.kThe statistics are reported in Table~\ref{tab:ideal_stiff_random_test}.
We note that on average, the cyclic method is approximately 2.5 times slower than the 2D Newton; however, it is about as fast the damped Newton method while being more robust with 0\% failure rate.
The accuracy in this test problem is more accurate for the cyclic method.

\begin{table}[]
    \centering
    \resizebox{\textwidth}{!}{
    \begin{tabular}{|c|c|c|c|c|}
    \hline
    \multicolumn{5}{|c|}{\textbf{Random trial \#1 (Ideal/Stiffened)}} \\
    \hline
    \multicolumn{5}{|c|}{\textbf{Bulk results}} \\
    \hline
     & Cyclic & 2D Newton & Damped Newton & Newton+Bisect \\
    \hline
    Total CPU time & \SI{0.232}{\second} & \SI{0.193}{\second} & \SI{0.417}{\second} & \SI{24.4}{\second} \\
    \hline
    Failure rate & 0.34\% & 13.94\% & 6.51\% & 0.32\% \\
    \hline
    \multicolumn{5}{|c|}{\textbf{Statistics for 8519 overlapping successful runs}} \\
    \hline
    Avg. CPU time & \SI{2.4e-5}{\second} & \SI{1.0e-5}{\second} & \SI{2.5e-5}{\second} & \SI{2.5e-3}{\second} \\
    \hline
    Avg. EOS evals & 19.6 & 8.0 & 24.5 & 547 \\
    \hline
    Avg. $P$ res. & \SI{1.05e-13}{} & \SI{1.14e-13}{} & \SI{3.92e-13}{} & \SI{3.16e-9}{} \\
    \hline
    Max. $P$ res. & \SI{2.70e-11}{} & \SI{8.23e-11}{} & \SI{1.51e-9}{} & \SI{5.21e-7}{} \\
    \hline
    Avg. $T$ res. & \SI{3.35e-14}{} & \SI{3.73e-14}{} & \SI{3.19e-13}{} & \SI{2.23e-9}{} \\
    \hline
    Max. $T$ res. & \SI{5.24e-12}{} & \SI{5.93e-11}{} & \SI{1.51e-9}{} & \SI{1.47e-7}{} \\
    \hline
    \end{tabular}
    }
    \caption{Results of the random trials for ideal and stiffened gas mixtures.}
    \label{tab:ideal_stiff_random_test}
\end{table}

\subsubsection{Test 2}
For the second test, we use the ideal gas , stiffened-gas, and the simple MACAW on the domain $\sfR = [\SI{e-12}{\giga\pascal}, \SI{e3}{\giga\pascal}] \times [\SI{e-3}{\kelvin}, \SI{e5}{\kelvin}]$.
The statistics are reported in Table~\ref{tab:ideal_stiff_macaw}.
The cyclic method is approximately two times slower than the 2D Newton method.
The failure rate is lower for all methods on this problem, while the cyclic still maintains 0\% failure rate.

\begin{table}[]
    \centering
    \resizebox{\textwidth}{!}{
    \begin{tabular}{|c|c|c|c|c|}
    \hline
    \multicolumn{5}{|c|}{\textbf{Random trial \#2 (Ideal/Stiffened/Simple MACAW)}} \\
    \hline
    \multicolumn{5}{|c|}{\textbf{Bulk results}} \\
    \hline
     & Cyclic & 2D Newton & Damped Newton & Newton+Bisect \\
    \hline
    Total CPU time & \SI{0.305}{\second} & \SI{0.219}{\second} & \SI{0.787}{\second} & \SI{59.4}{\second} \\
    \hline
    Failure rate & 0.01\% & 16.63\% & 10.14\% & 13.09\% \\
    \hline
    \multicolumn{5}{|c|}{\textbf{Statistics for 7545 overlapping successful runs}} \\
    \hline
    Avg. CPU time & \SI{2.4e-5}{\second} & \SI{1.1e-5}{\second} & \SI{2.8e-5}{\second} & \SI{3.0e-3}{\second} \\
    \hline
    Avg. EOS evals & 16.8 & 7.9 & 24.7 & 452 \\
    \hline
    Avg. $P$ res. & \SI{1.15e-10}{} & \SI{6.30e-13}{} & \SI{9.84e-13}{} & \SI{4.00e-9}{} \\
    \hline
    Max. $P$ res. & \SI{1.68e-7}{} & \SI{2.33e-9}{} & \SI{2.92e-9}{} & \SI{1.45e-6}{} \\
    \hline
    Avg. $T$ res. & \SI{1.44e-10}{} & \SI{7.21e-12}{} & \SI{9.33e-12}{} & \SI{3.12e-9}{} \\
    \hline
    Max. $T$ res. & \SI{1.36e-7}{} & \SI{4.86e-8}{} & \SI{4.86e-8}{} & \SI{1.19e-6}{} \\
    \hline
    \end{tabular}
    }
    \caption{Results of the random trials for ideal, stiffened gas, and simple MACAW mixture. The average CPU time and number of steps are only computed over successful runs.}
    \label{tab:ideal_stiff_macaw}
\end{table}

\subsubsection{Test 3} \label{sec:stat_test_3}
We now consider a much more challenging test (for all methods) and discuss some of the issues.
We consider a mixture of five materials: simple MACAW, product Davis, reactant Davis, stiffened gas, and ideal gas.
The $P$-$T$ domain is given by $\sfR = [\SI{e-9}{\giga\pascal}, \SI{e3}{\giga\pascal}] \times [\SI{e-3}{\kelvin}, \SI{5e4}{\kelvin}]$.
The statistics are presented in Table~\ref{tab:5mat_test3}.

From analyzing some of the problems which failed, we found that these methods would fail by ``ping-ponging'' between two points or ``sailing'' around the solution, unable to get within the convergence tolerance.
We believe that the tabular approximation is potential source of these problems since the approximations are not $C^1$.
Furthermore, the reactant Davis also seems to a difficult EOS to handle as seen in Section~\ref{sec:grid_5mat_test} and Section~\ref{sec:grid_davis_test}.
Refining the tables can sometimes allow a method to converge when previously it would fail.
Next, we slightly modify this test to investigate the causes of the failure in Sections~\ref{sec:stat_test_4} and \ref{sec:stat_test_5}.
\begin{table}[]
    \centering
    \resizebox{\textwidth}{!}{
    \begin{tabular}{|c|c|c|c|c|}
    \hline
    \multicolumn{5}{|c|}{\textbf{Random trial \#3 (Five material)}} \\
    \hline
    \multicolumn{5}{|c|}{\textbf{Bulk results}} \\
    \hline
     & Cyclic & 2D Newton & Damped Newton & Newton+Bisect \\
    \hline
    Total CPU time & \SI{0.730}{\second} & \SI{0.400}{\second} & \SI{1.72}{\second} & \SI{166}{\second} \\
    \hline
    Failure rate & 7.45\% & 42.86\% & 36.87\% & 28.72\% \\
    \hline
    \multicolumn{5}{|c|}{\textbf{Statistics for 4609 overlapping successful runs}} \\
    \hline
    Avg. CPU time & \SI{3.8e-5}{\second} & \SI{1.4e-5}{\second} & \SI{3.7e-5}{\second} & \SI{5.0e-3}{\second} \\
    \hline
    Avg. EOS evals & 23.2 & 9.2 & 28.4 & 455 \\
    \hline
    Avg. $P$ res. & \SI{1.69e-11}{} & \SI{2.88e-13}{} & \SI{1.66e-13}{} & \SI{1.12e-9}{} \\
    \hline
    Max. $P$ res. & \SI{4.83e-9}{} & \SI{3.07e-10}{} & \SI{2.80e-11}{} & \SI{4.21e-7}{} \\
    \hline
    Avg. $T$ res. & \SI{4.37e-10}{} & \SI{1.75e-11}{} & \SI{1.62e-11}{} & \SI{3.72e-9}{} \\
    \hline
    Max. $T$ res. & \SI{1.85e-7}{} & \SI{1.65e-8}{} & \SI{1.65e-8}{} &\SI{8.52e-7}{} \\
    \hline
    \end{tabular}
    }
    \caption{Results of the random trials for Test 3. The EOS used are, ideal, stiffened gas, simple MACAW, reactant Davis and product Davis.}
    \label{tab:5mat_test3}
\end{table}

\subsubsection{Test 4} \label{sec:stat_test_4}
We repeat the same test in Section~\ref{sec:stat_test_3} except we change $T_{\max}$ from $\SI{5e4}{\kelvin}$ to $\SI{5e3}{\kelvin}$.
The results are reported in Table~\ref{tab:5mat_test4}.
We now see that the failure rate has drastically decreased for all methods except the Newton-bisection method.
\begin{table}[]
    \centering
    \resizebox{\textwidth}{!}{
    \begin{tabular}{|c|c|c|c|c|}
    \hline
    \multicolumn{5}{|c|}{\textbf{Random trial \#4 (Five material)}} \\
    \hline
    \multicolumn{5}{|c|}{\textbf{Bulk results}} \\
    \hline
     & Cyclic & 2D Newton & Damped Newton & Newton+Bisect \\
    \hline
    Total CPU time & \SI{0.434}{\second} & \SI{0.287}{\second} & \SI{0.776}{\second} & \SI{135}{\second} \\
    \hline
    Failure rate & 0.79\% & 21.34\% & 12.52\% & 27.35\% \\
    \hline
    \multicolumn{5}{|c|}{\textbf{Statistics for 5965 overlapping successful runs}} \\
    \hline
    Avg. CPU time & \SI{3.4e-5}{\second} & \SI{1.4e-5}{\second} & \SI{3.5e-5}{\second} & \SI{5.2e-3}{\second} \\
    \hline
    Avg. EOS evals & 20.7 & 9.1 & 27.3 & 472 \\
    \hline
    Avg. $P$ res. & \SI{1.47e-11}{} & \SI{1.90e-13}{} & \SI{1.91e-13}{} &\SI{1.15e-9}{} \\
    \hline
    Max. $P$ res. & \SI{5.18e-9}{} & \SI{6.81e-11}{} & \SI{4.00e-11}{} & \SI{3.33e-7}{} \\
    \hline
    Avg. $T$ res. & \SI{4.75e-10}{} & \SI{3.44e-11}{} & \SI{3.19e-11}{} & \SI{3.84e-9}{} \\
    \hline
    Max. $T$ res. & \SI{4.51e-7}{} & \SI{6.70e-8}{} & \SI{6.70e-8}{} &\SI{1.62e-6}{} \\
    \hline
    \end{tabular} 
   }
    \caption{Results of the random trials for Test 4. The EOS used are, ideal, stiffened gas, simple MACAW, reactant Davis and product Davis.}
    \label{tab:5mat_test4}
\end{table}

\subsubsection{Test 5} \label{sec:stat_test_5}
For the last test, we simply repeat the test in Section~\ref{sec:stat_test_3} except we now omit the reactant Davis EOS.
The results are reported in Table~\ref{tab:5mat_test5}.
We see test results similar to those reported in Table~\ref{tab:5mat_test4}.
\begin{table}[]
    \centering
    \resizebox{\textwidth}{!}{
    \begin{tabular}{|c|c|c|c|c|}
    \hline
    \multicolumn{5}{|c|}{\textbf{Random trial \#5 (Four material)}} \\
    \hline
    \multicolumn{5}{|c|}{\textbf{Bulk results}} \\
    \hline
     & Cyclic & 2D Newton & Damped Newton & Newton+Bisect \\
    \hline
    Total CPU time & \SI{0.366}{\second} & \SI{0.218}{\second} & \SI{0.574}{\second} & \SI{103}{\second} \\
    \hline
    Failure rate & 0.08\% & 11.60\% & 4.47\% & 23.34\% \\
    \hline
    \multicolumn{5}{|c|}{\textbf{Statistics for 7249 overlapping successful runs}} \\
    \hline
    Avg. CPU time & \SI{3.0e-5}{\second} & \SI{1.3e-5}{\second} & \SI{3.1e-5}{\second} & \SI{4.4e-3}{\second} \\
    \hline
    Avg. EOS evals & 19.1 & 8.4 & 24.7 & 500 \\
    \hline
    Avg. $P$ res. & \SI{9.46e-12}{} & \SI{1.14e-13}{} & \SI{9.59e-14}{} &\SI{5.60e-10}{} \\
    \hline
    Max. $P$ res. & \SI{2.83e-9}{} & \SI{9.90e-11}{} & \SI{1.46e-11}{} & \SI{1.32e-7}{} \\
    \hline
    Avg. $T$ res. & \SI{5.63e-10}{} & \SI{5.58e-11}{} & \SI{6.45e-11}{} & \SI{2.31e-9}{} \\
    \hline
    Max. $T$ res. & \SI{1.23e-6}{} & \SI{3.34e-7}{} & \SI{3.34e-7}{} &\SI{8.59e-7}{} \\
    \hline
    \end{tabular}
    }
    \caption{Results of the random trials for Test 5. The EOS used are: ideal, stiffened gas, simple MACAW, and product Davis.}
    \label{tab:5mat_test5}
\end{table}

\subsection{Two-state transition} \label{sec:transition_problems}
Often when solving the four-equation model, one uses the pressure and temperature solution of the previous time step. 
Since solving the four-equation model is out of scope for this paper, we construct a series of data that mimics a transition between two materials and use the previous solution as a guess for the next data.
We use an $\arctan$ function to interpolate between the data points given by:
\begin{equation} \label{eq:contact_func}
    C(x; y_0, y_1) = \frac12 (y_0 + y_1) + \frac12 (y_1 - y_0) \frac{\arctan(kx)}{\arctan(k)},
\end{equation}
where $y_L$ and $y_R$ are the left and right thermodynamic states and $k = 100$ is a parameter used increase the sharpness of the transition.
For all problems, we select $(P_Z, T_Z) \in \sfR$ then the datum are computed by $\tau_Z = \tau(P_Z, T_Z, \bY_Z)$ and $e_Z = e(P_Z, T_Z, \bY_Z)$ for $Z \in \{L,R\}$.
The transitional data functions are described by $C(x; \tau_L, \tau_R)$, $C(x; e_L, e_R)$, $C(x; Y_{0,L}, Y_{0,R})$, and $Y_1(x) = 1 - C(x; Y_{0,L}, Y_{0,R})$ for all $x \in (-1, 1)$.
The sampling domain is $D = [-1,1]$.
Setting $N = 200$, the sample points are given by $x_i = -1 + \frac{2i}{N+1}$ for $i \in \intset{1}{N}$.
Plots of the transitional data are provided in Figures~\ref{fig:approx_contact_sie} and \ref{fig:approx_contact_tau}.
For all tests in this section, we use $\sfR = [\SI{e-8}{\giga\pascal}, \SI{100}{\giga\pascal}] \times [\SI{0.001}{\kelvin}, \SI{5e4}{\kelvin}]$.

\begin{figure}[htbp]
\centering
\begin{tikzpicture}
    \begin{axis}[
      width=10cm, height=7cm,
      xlabel={\(x\)},
      title={Approximate contact between two materials},
      legend pos=north east,
      legend cell align=left,
      grid=major,
    ]
        \addplot[domain=-1:1,samples=200,thick,dashed,blue] {atan_interp(0.0784, 1.50, 100.0, x)};
        \addlegendentry{$e$}
    \end{axis}
\end{tikzpicture}
\caption{Plots of the approximate contact for $e$ defined $C(x;y_0, y_1)$ in \eqref{eq:contact_func}.}
\label{fig:approx_contact_sie}
\end{figure}

\begin{figure}
\centering
\begin{tikzpicture}
    \begin{axis}[
      width=10cm, height=7cm,
      xlabel={\(x\)},
      title={Approximate contact between two materials},
      legend pos=north east,
      legend cell align=left,
      grid=major,
    ]
        \addplot[domain=-1:1,samples=200,thick,dashed,blue] {atan_interp(0.112, 0.6, 100.0, x)};
        \addlegendentry{$\tau$}
        \addplot[domain=-1:1,samples=200,thick,dotted,red] {atan_interp(1.0, 0.0, 100.0, x)};
        \addlegendentry{$Y_0$}
    \end{axis}
\end{tikzpicture}
\caption{Plots of the approximate transition for $\tau$ and $Y_0$ defined $C(x;y_0, y_1)$ in \eqref{eq:contact_func} for the test problem in Section~\ref{sec:macaw_ideal_transition1}.}
\label{fig:approx_contact_tau}
\end{figure}

\subsubsection{Two-state: Stiffened \& ideal gas contact} \label{sec:stiff_ideal_contact}
For the first test, try a contact between ideal and stiffened gas laws; that is, the pressures are equal on both sides.
The specific data we use is:
\begin{subequations}
\begin{align}
    (P_L, T_L, \bY_L) &= (\SI{1.01e-4}{\giga\pascal}, \SI{1000}{\kelvin}, (1,0)), \\
    (P_R, T_R, \bY_R) &= (\SI{1.01e-4}{\giga\pascal}, \SI{300}{\kelvin}, (0,1)),
\end{align}
\end{subequations}
where the left state is the stiffened gas law and the right state is the ideal gas law.
This smooth mixture transition approximating the contact is similar to what may appear from numerical viscosity in a first order method when solving the four-equation model.
In this case, we see the PTE solutions plotted in Figure~\ref{fig:PTE_trajectories1} throughout the transition.
The statistics are reported in Table~\ref{tab:stiff_ideal_transition1}.
Interestingly the Newton-bisection method struggles for this problem (slow convergence).
\begin{table}[]
    \centering
    \resizebox{\textwidth}{!}{
    \begin{tabular}{|c|c|c|c|c|}
    \hline
    \multicolumn{5}{|c|}{\textbf{Stiffened gas/Ideal gas contact}} \\
    \hline
     & Cyclic & 2D Newton & Damped Newton & Newton+Bisect \\
    \hline
    Failure rate & 0\% & 0\% & 0\% & 1\% \\
    \hline
    Avg. CPU time & \SI{1.0e-5}{\second} & \SI{4.7e-6}{\second} & \SI{7.0e-6}{\second} & \SI{5.0e-3}{\second} \\
    \hline
    Avg. EOS evals & 8.7 & 3.6 & 6.2 & 1161 \\
    \hline
    \end{tabular}
    }
    \caption{Statistics for the contact problem between a stiffened gas and an ideal gas in Section~\ref{sec:stiff_ideal_contact}.
    Note that the failure in the Newton-bisection method was due to slow convergence to the root.
    Increasing the number of iterations allows the Newton-bisection method to converge within the tolerance.}
    \label{tab:stiff_ideal_transition1}
\end{table}

\subsubsection{Two-state: Simple MACAW \& ideal gas} \label{sec:macaw_ideal_transition1}
For the next test, we use the Simple MACAW for the left state and ideal gas for the right state.
The left and right data are provided by,
\begin{subequations}
\begin{align}
    (P_L, T_L, \bY_L) &= (\SI{1.01e-4}{\giga\pascal}, \SI{300}{\kelvin}, (1,0)), \\
    (P_R, T_R, \bY_R) &= (\SI{1}{\giga\pascal}, \SI{1500}{\kelvin}, (0,1)).
\end{align}
\end{subequations}
A plot of the PTE solution along the transition is provided in Figure~\ref{fig:PTE_trajectories1} and the statistics is reported in Table~\ref{tab:macaw_ideal_transition1}.
Each method performs well on this problem.
\begin{table}[]
    \centering
    \resizebox{\textwidth}{!}{
    \begin{tabular}{|c|c|c|c|c|}
    \hline
    \multicolumn{5}{|c|}{\textbf{Simple MACAW/Ideal gas transition}} \\
    \hline
     & Cyclic & 2D Newton & Damped Newton & Newton+Bisect \\
    \hline
    Failure rate & 0\% & 0\% & 0\% & 0\% \\
    \hline
    Avg. CPU time & \SI{8.9e-6}{\second} & \SI{4.5e-6}{\second} & \SI{7.0e-5}{\second} & \SI{6.8e-4}{\second} \\
    \hline
    Avg. EOS evals & 7.4 & 3.5 & 6.0 & 149 \\
    \hline
    \end{tabular}
    }
    \caption{Statistics for the transition between simple MACAW and ideal gas problem in Section~\ref{sec:macaw_ideal_transition1}.
    All methods perform well on this problem.}
    \label{tab:macaw_ideal_transition1}
\end{table}
\begin{figure}
    \centering
    \includegraphics[width=0.49\linewidth]{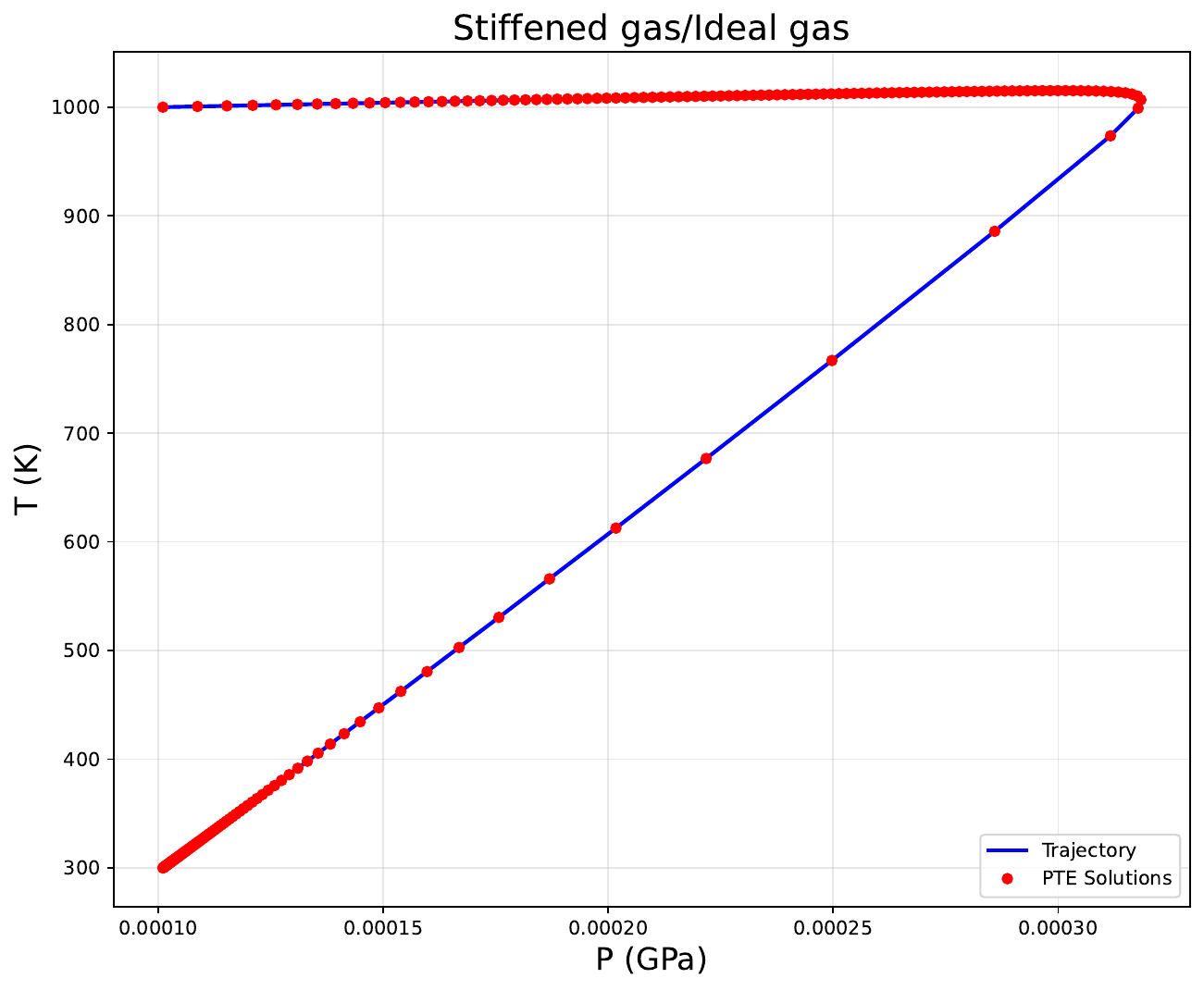}
    \includegraphics[width=0.49\linewidth]{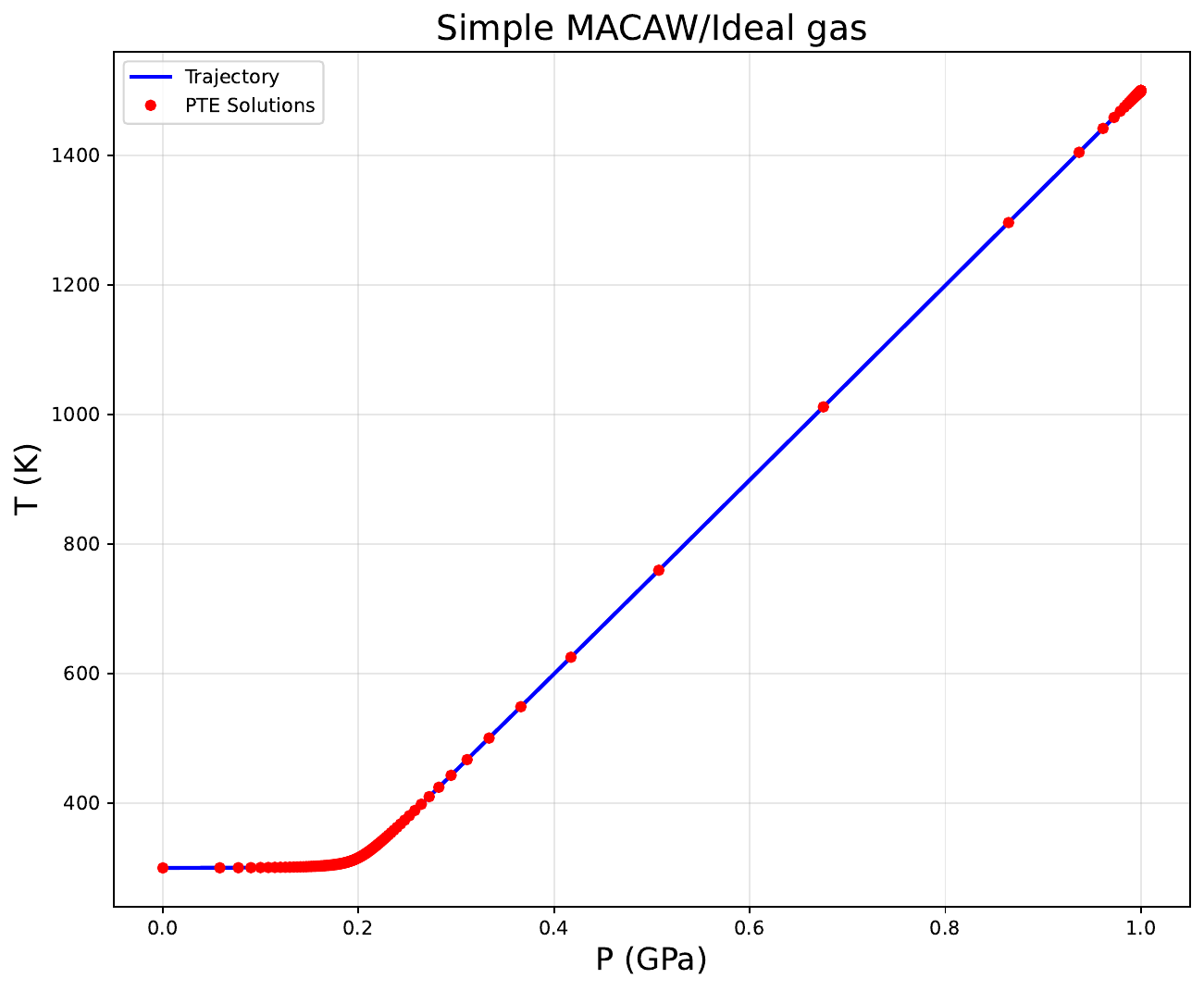}
    \includegraphics[width=0.49\linewidth]{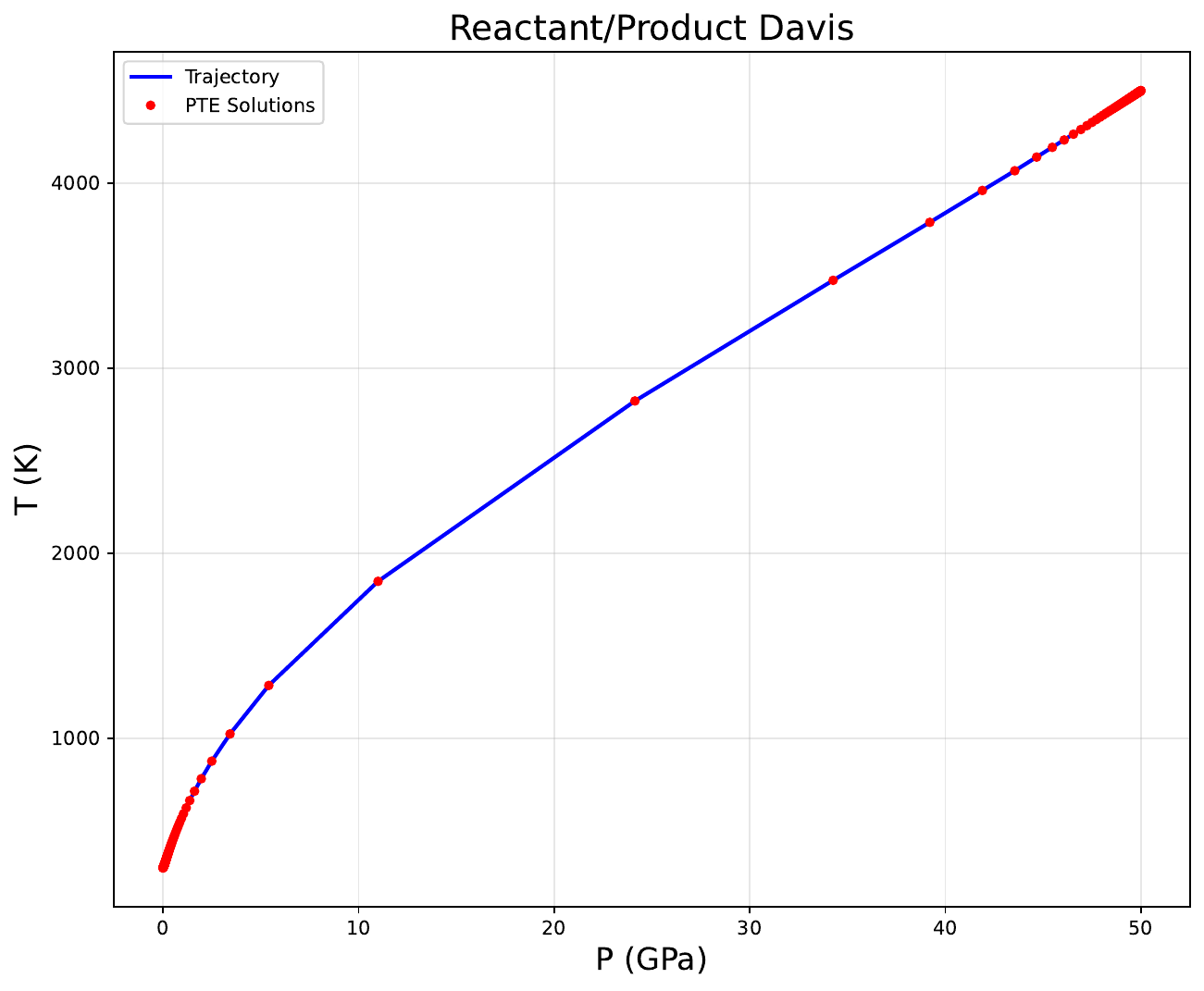}
    \caption{The PTE solutions for the problems in Section~\ref{sec:stiff_ideal_contact} (top left), Section~\ref{sec:macaw_ideal_transition1} (top right), and Section~\ref{sec:davis_transition1} (bottom).}
    \label{fig:PTE_trajectories1}
\end{figure}

\subsubsection{Two-state: Reactant \& product Davis} \label{sec:davis_transition1}
Next, we consider a transition between Davis reactants to Davis products.
The left and right data are:
\begin{subequations}
\begin{align}
    (P_L, T_L, \bY_L) &= (\SI{1.01e-4}{\giga\pascal}, \SI{300}{\kelvin}, (1,0)), \\
    (P_R, T_R, \bY_R) &= (\SI{50}{\giga\pascal}, \SI{4500}{\kelvin}, (0,1)).
\end{align}
\end{subequations}
The statistics are reported in Table~\ref{tab:davis_transition1} and the plot of the PTE solutions are provided in Figure~\ref{fig:PTE_trajectories1}.
\begin{table}[]
    \centering
    \resizebox{\textwidth}{!}{
    \begin{tabular}{|c|c|c|c|c|}
    \hline
    \multicolumn{5}{|c|}{\textbf{Reactant/product Davis transition}} \\
    \hline
     & Cyclic & 2D Newton & Damped Newton & Newton+Bisection \\
    \hline
    Failure rate & 0\% & 0\% & 0\% & 11\% \\
    \hline
    Avg. CPU time & \SI{1.4e-5}{\second} & \SI{4.3e-6}{\second} & \SI{6.6e-6}{\second} & \SI{1.7e-3}{\second} \\
    \hline
    Avg. EOS evals & 12 & 3.3 & 5.5 & 467 \\
    \hline
    \end{tabular}
    }
    \caption{Statistics for the transition between reactant and product Davis EOS problem in Section~\ref{sec:davis_transition1}.}
    \label{tab:davis_transition1}
\end{table}
%
\subsection{Grid test}
We now illustrate the robustness of the nonlinear solves with a ``grid test''.
We start with the true pressure temperature solution, $(P_0, T_0)$, as well as the mass fractions, $\bY_0$, then compute the corresponding data: $\tau_0 = \tau(P_0, T_0, \bY_0)$ and $e_0 = e(P_0, T_0, \bY_0)$.
We run each numerical method with the initial guess provided at each grid point, $(P_i, T_j)$, in our $P$-$T$ domain and then count the number of iterations it took for convergence to be achieved.
If the method exceeds 100 steps then we flag it as a failure.

\subsubsection{Test 1} \label{sec:grid_ideal_stiff}
We use the ideal gas ($Y_0 = 0.5$) and the stiffened gas ($Y_1 = 0.5$); the true solution is $P_0 = \SI{45}{\giga\pascal}$ and $T_0 = \SI{0.15}{\kelvin}$.
The pressure-temperature domain is $\sfR = [\SI{e-9}{\giga\pascal}, \SI{e3}{\giga\pascal}] \times [\SI{0.001}{\kelvin}, \SI{e5}{\kelvin}]$.
Plots of the iteration counts are shown in Figure~\ref{fig:grid_test_macaw_product}.
Note the ``striping'' effect of the Newton-bisection method; this is reproduced in all test problems as the method fundamentally reduces the 2D problem to a 1D problem.
\begin{figure}
    \centering
    \includegraphics[width=0.48\linewidth]{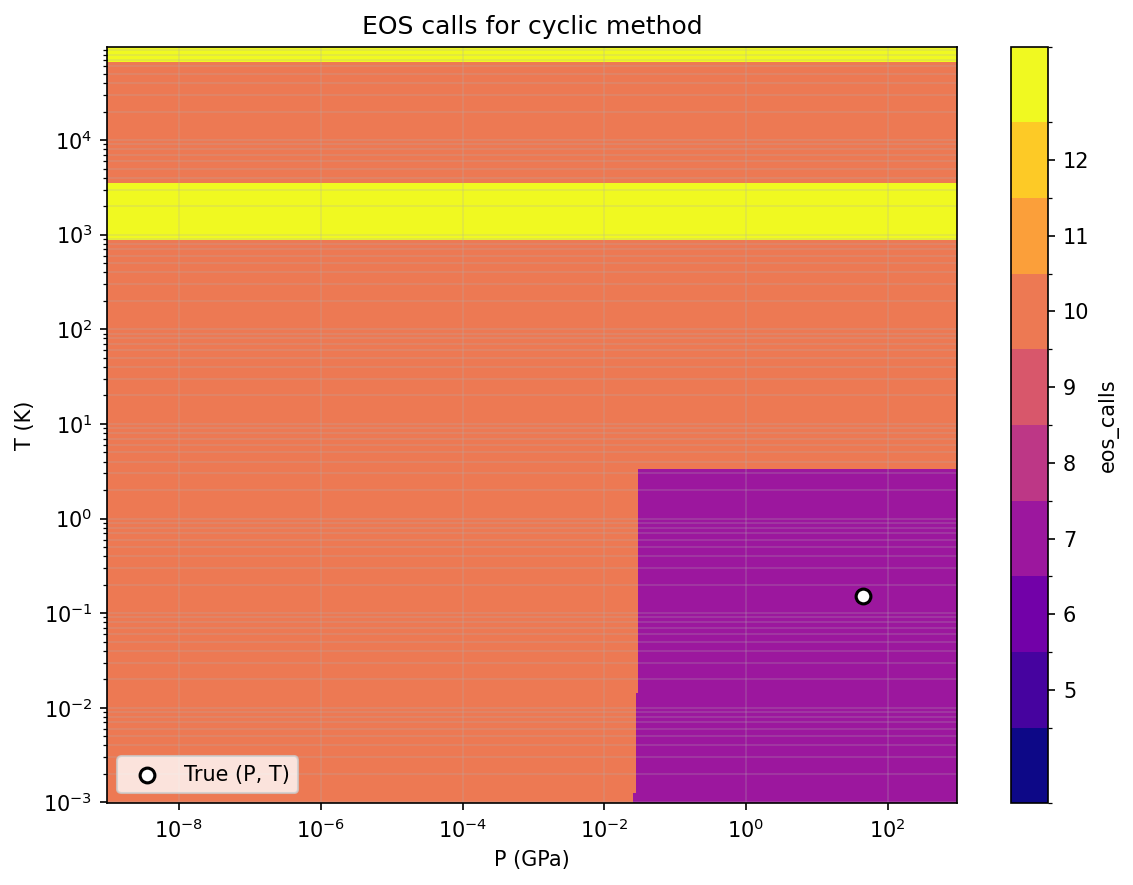}
    \includegraphics[width=0.48\linewidth]{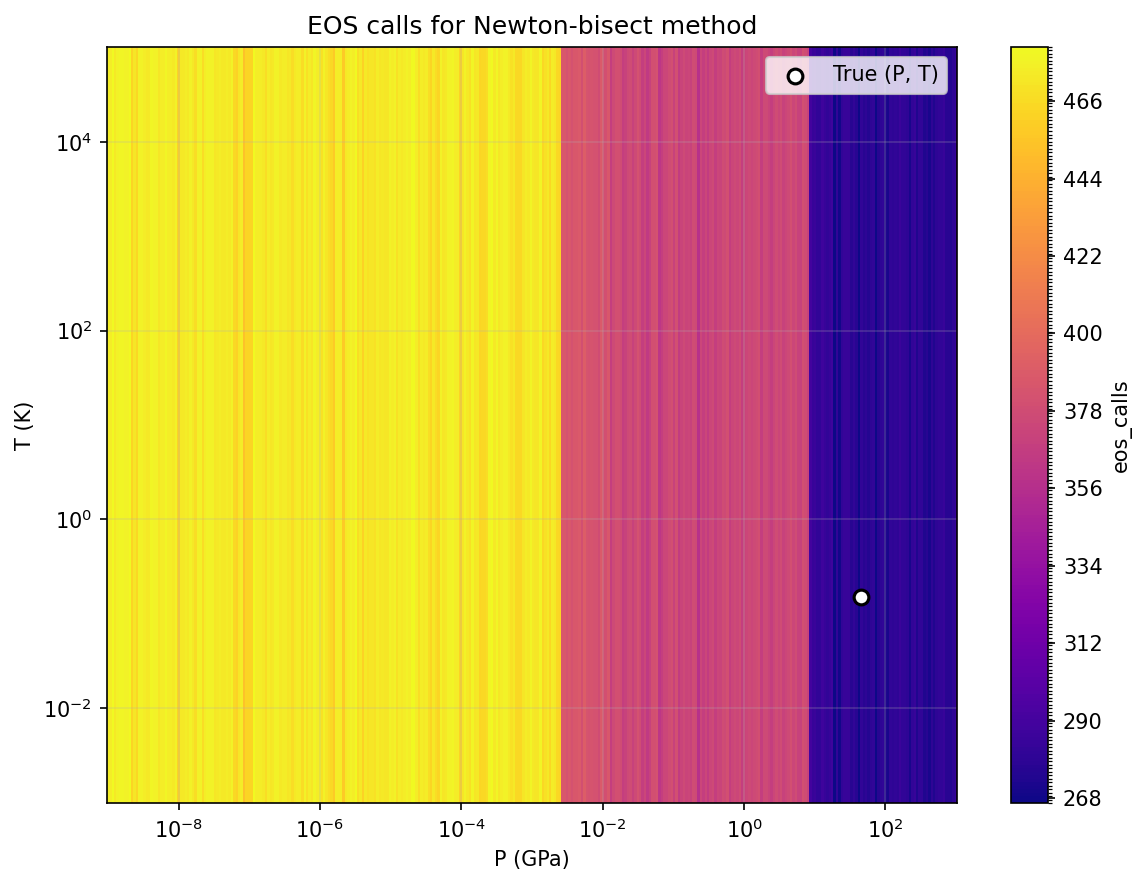}
    \includegraphics[width=0.48\linewidth]{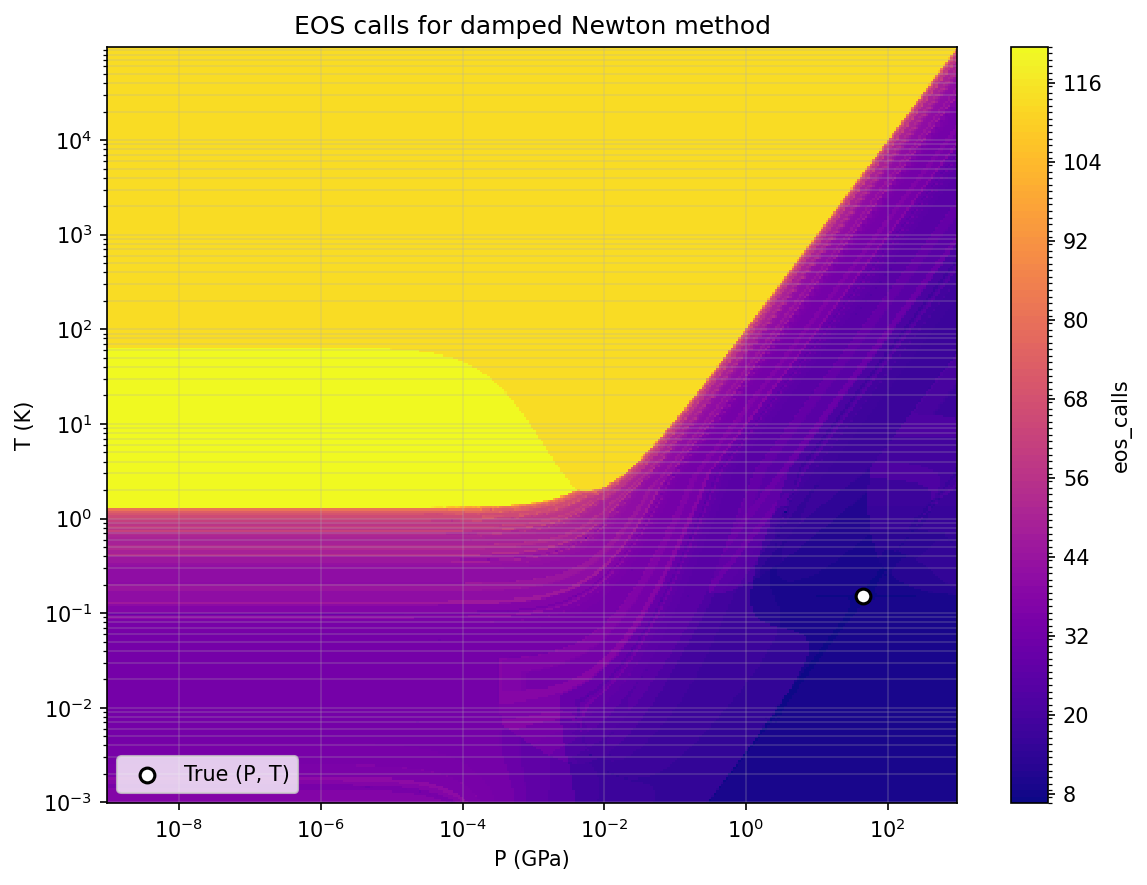}
    \includegraphics[width=0.48\linewidth]{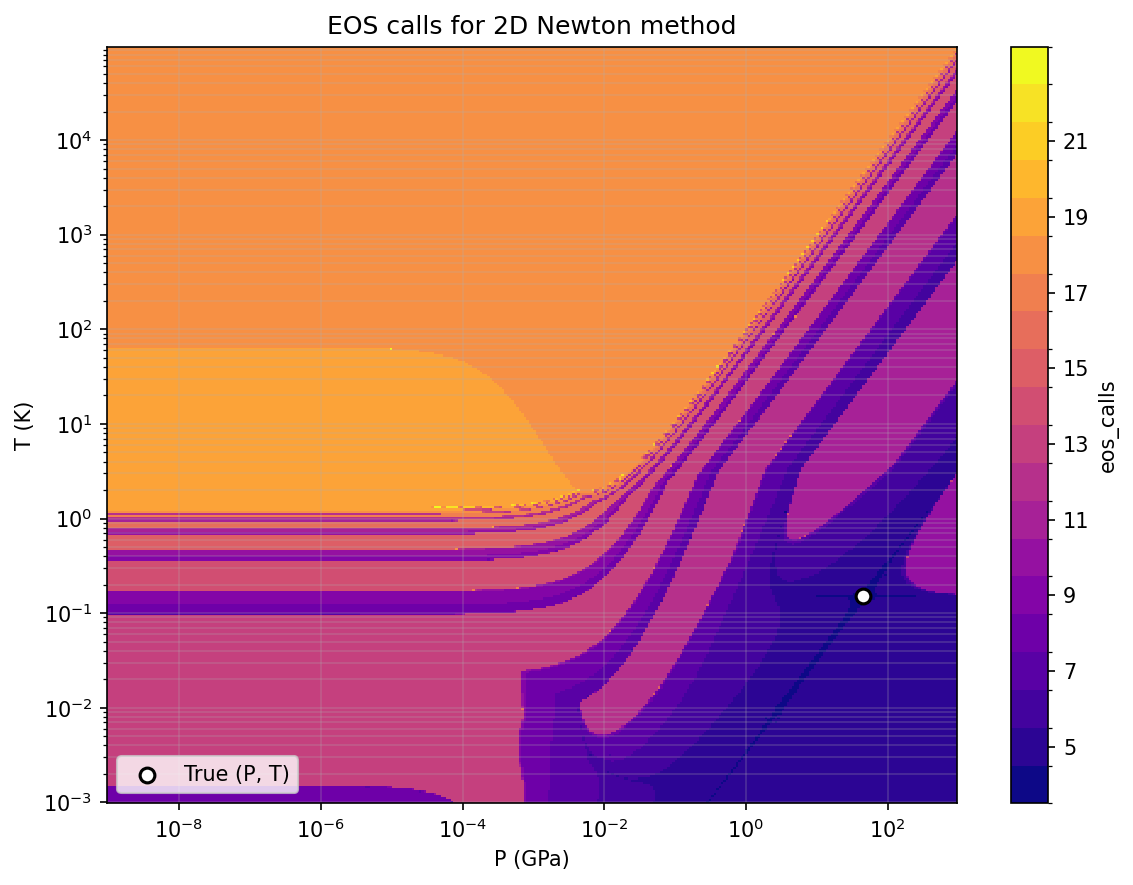}
    \caption{Plots of the number of EOS calls for each starting point for the 50/50 mixture of ideal and stiffened gas EOS.}
    \label{fig:grid_test_macaw_product}
\end{figure}

\subsubsection{Test 2} \label{sec:grid_damped_failure}
We now consider a similar test case as the one provided in Section~\ref{sec:grid_ideal_stiff}; however, we demonstrate that the damped Newton can inhibit the convergence ability of the method.
As before, we use an ideal ($Y_0 = 0.5$) and stiffened gas ($Y_1 = 0.5$) mixture with the true pressure and temperature provided as $P_0 = \SI{e-4}{\giga\pascal}$ and $T_0 = \SI{2e4}{\kelvin}$, respectively.
As in the previous section, we use the same domain, $\sfR = [\SI{e-9}{\giga\pascal}, \SI{e3}{\giga\pascal}] \times [\SI{0.001}{\kelvin}, \SI{e5}{\kelvin}]$.
The EOS call plots are presented in Figure~\ref{fig:grid_test_davis_test1}.
One sees that there is a large region of phase space for which the damped Newton is unable to converge.
Even the cyclic method has a small band at the low temperatures where it is unable to converge from, while the 2D Newton method is robust to all starting points for this problem.
\begin{figure}
    \centering
    \includegraphics[width=0.48\linewidth]{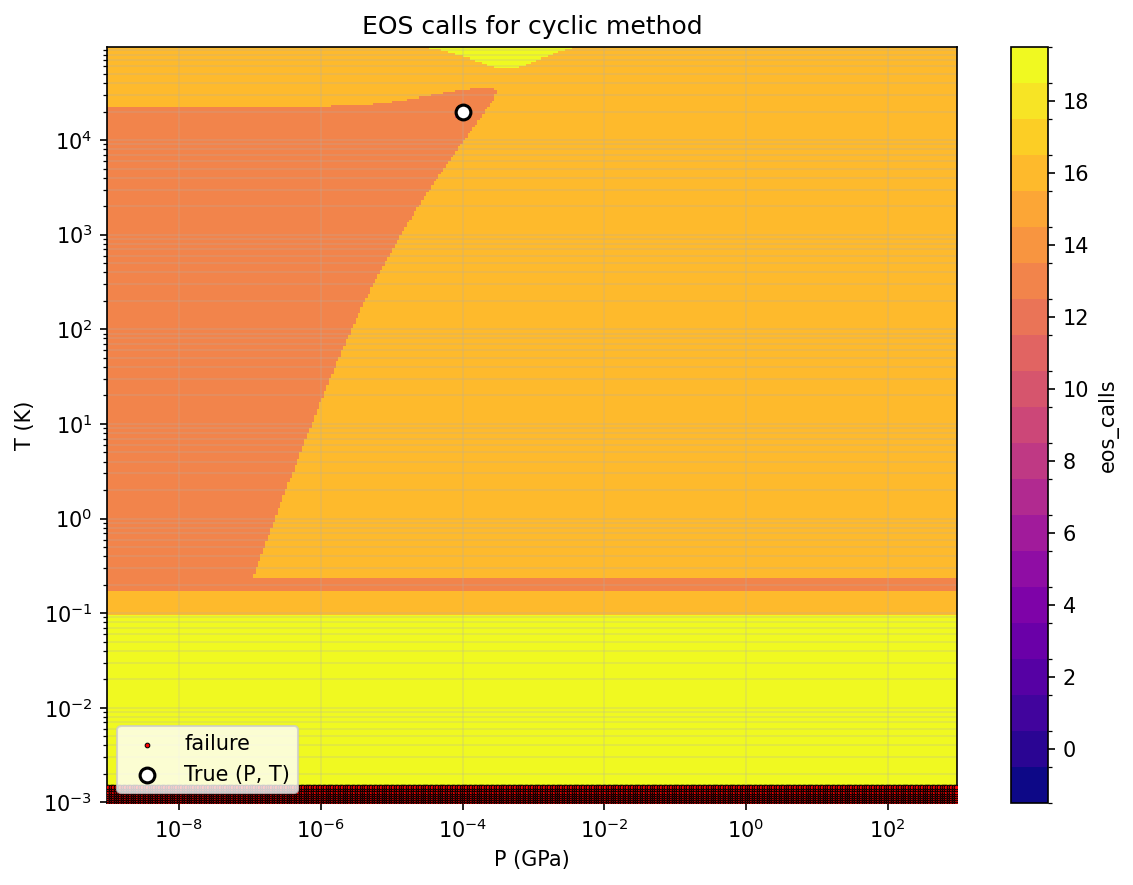}
    \includegraphics[width=0.48\linewidth]{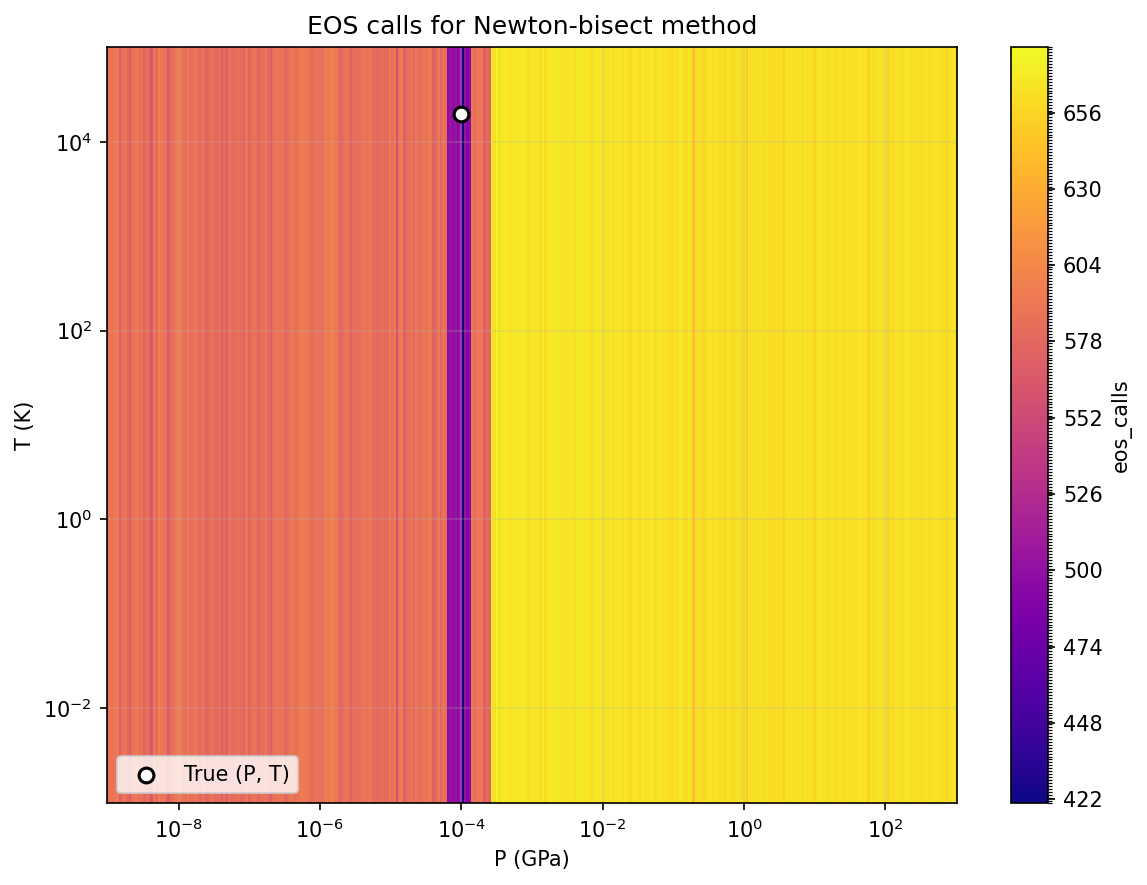}
    \includegraphics[width=0.48\linewidth]{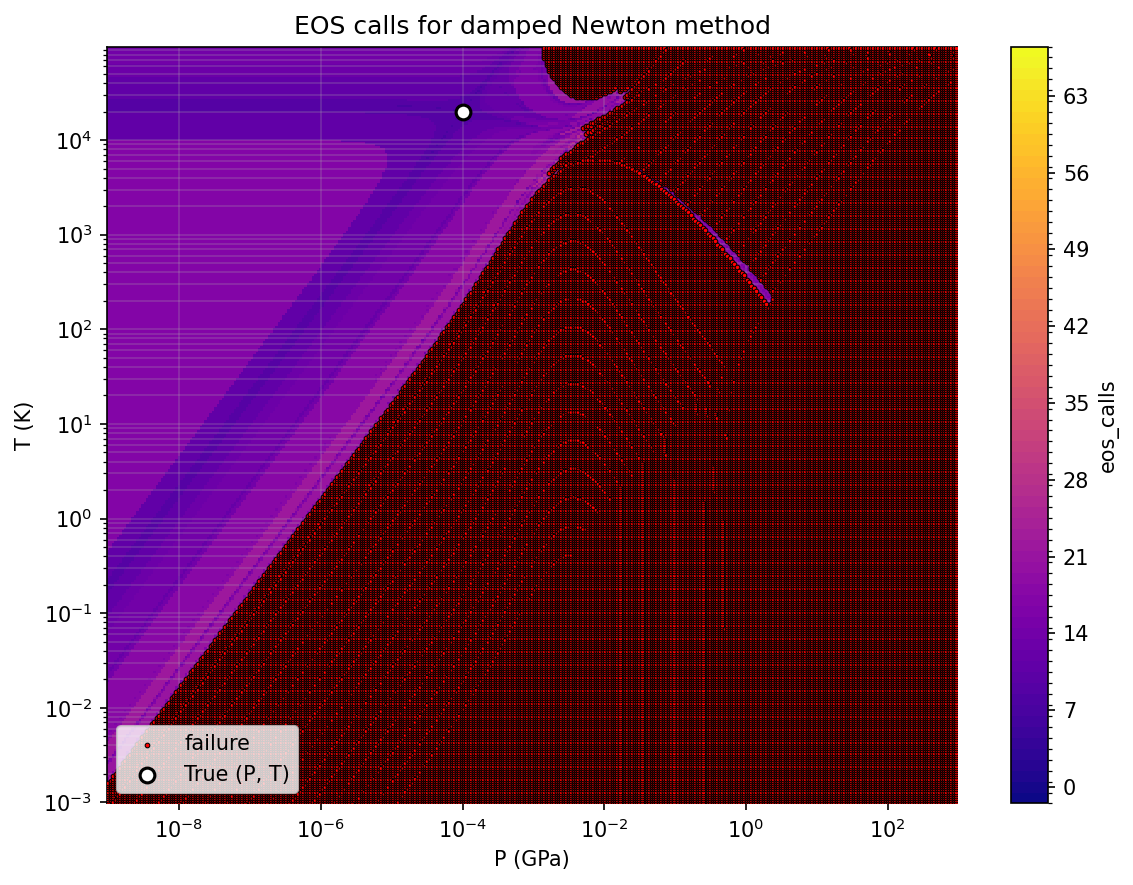}
    \includegraphics[width=0.48\linewidth]{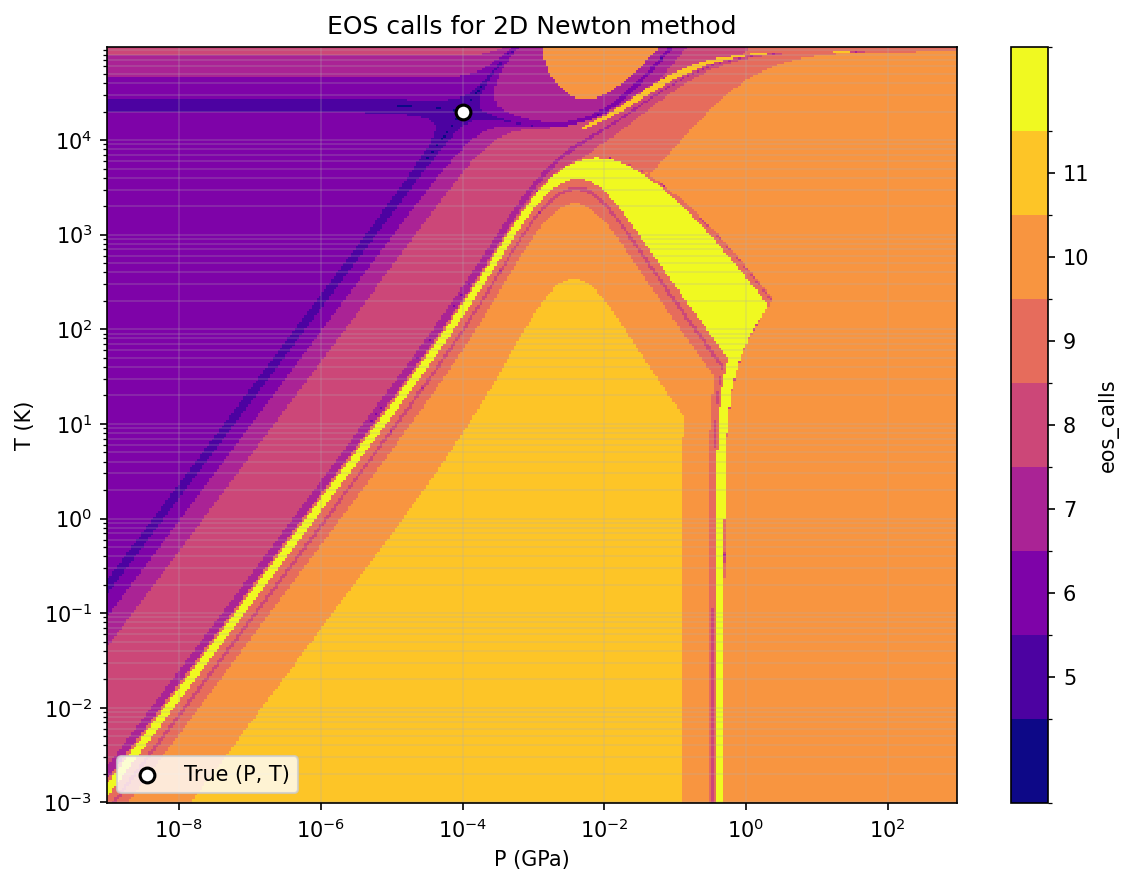}
    \caption{Iteration plots for a 50/50 mixture of ideal and stiffened gas equations of state from Section~\ref{sec:grid_damped_failure}.
    Note the failures in the damped Newton method.}
    \label{fig:grid_test_davis_test1}
\end{figure}

\subsubsection{Test 3} \label{sec:grid_5mat_test}
Next we run a test using ideal gas ($Y_0 = 0.05$), stiffened gas ($Y_1 = 0.05$), simple MACAW ($Y_2 = 0.2$), reactant Davis ($Y_3 = 0.05$), and product Davis ($Y_4 = 0.65$).
The true solution is chosen to be $P_0 = \SI{45}{\giga\pascal}$ and $T_0 = \SI{3000}{\kelvin}$ and the $P$-$T$ domain is provided by, $\sfR = [\SI{e-8}{\giga\pascal}, \SI{500}{\giga\pascal}] \times [\SI{0.001}{\kelvin}, \SI{3e4}{\kelvin}]$.
Results are presented in Figure~\ref{fig:grid_5mat_test}.
\begin{figure}
    \centering
    \includegraphics[width=0.48\linewidth]{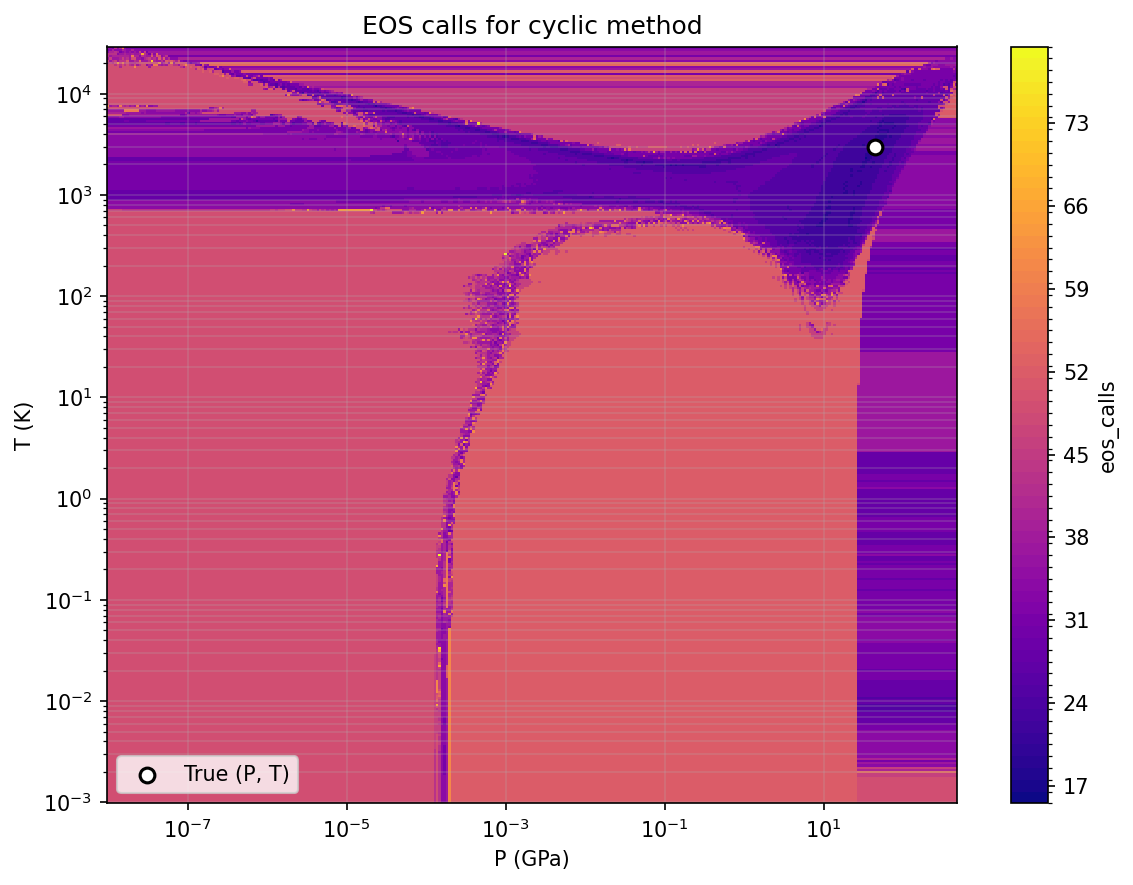}
    \includegraphics[width=0.48\linewidth]{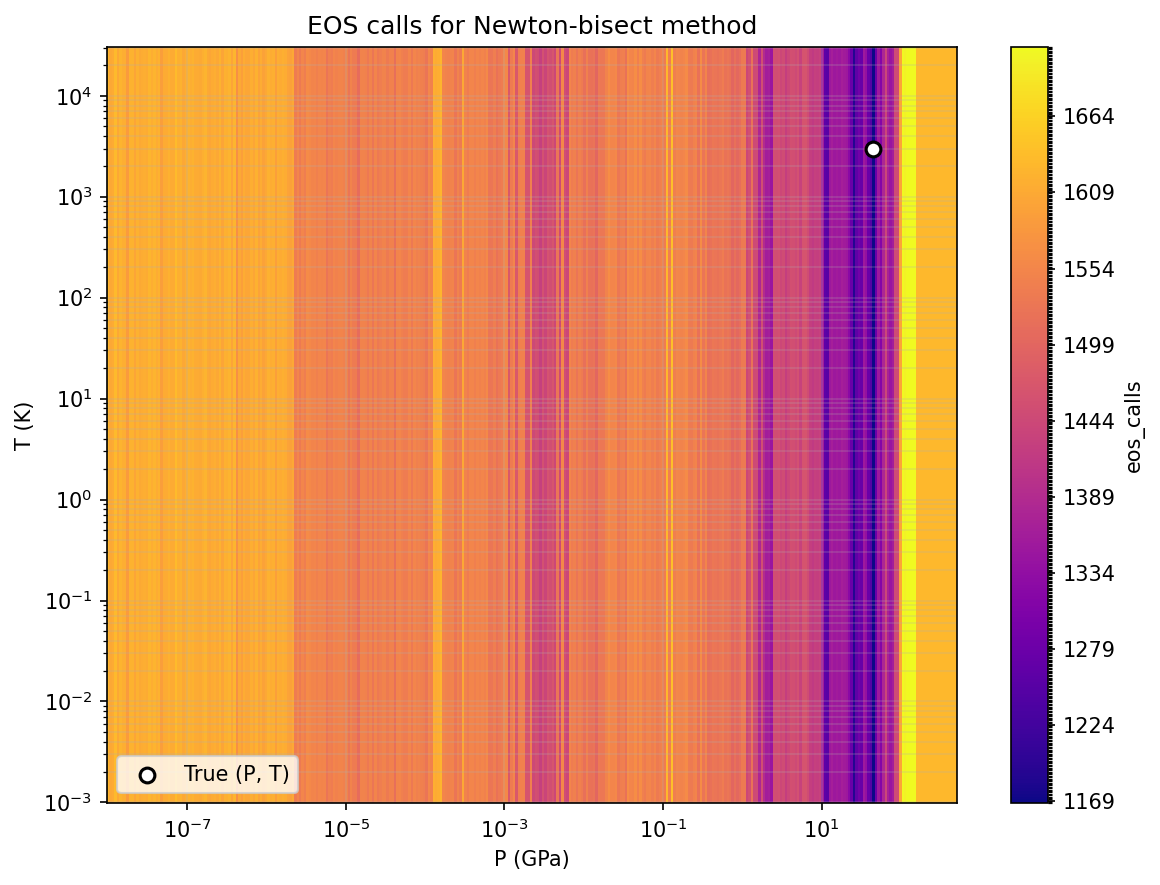}
    \includegraphics[width=0.48\linewidth]{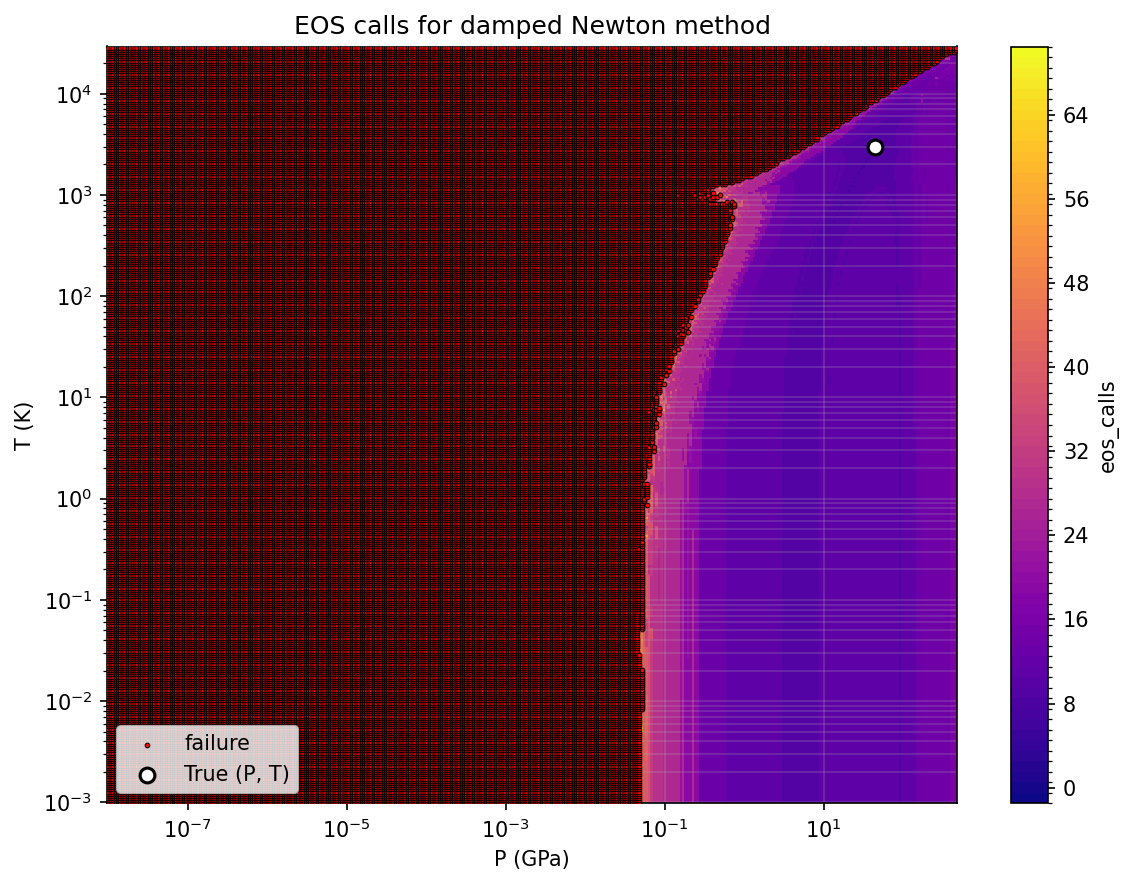}
    \includegraphics[width=0.48\linewidth]{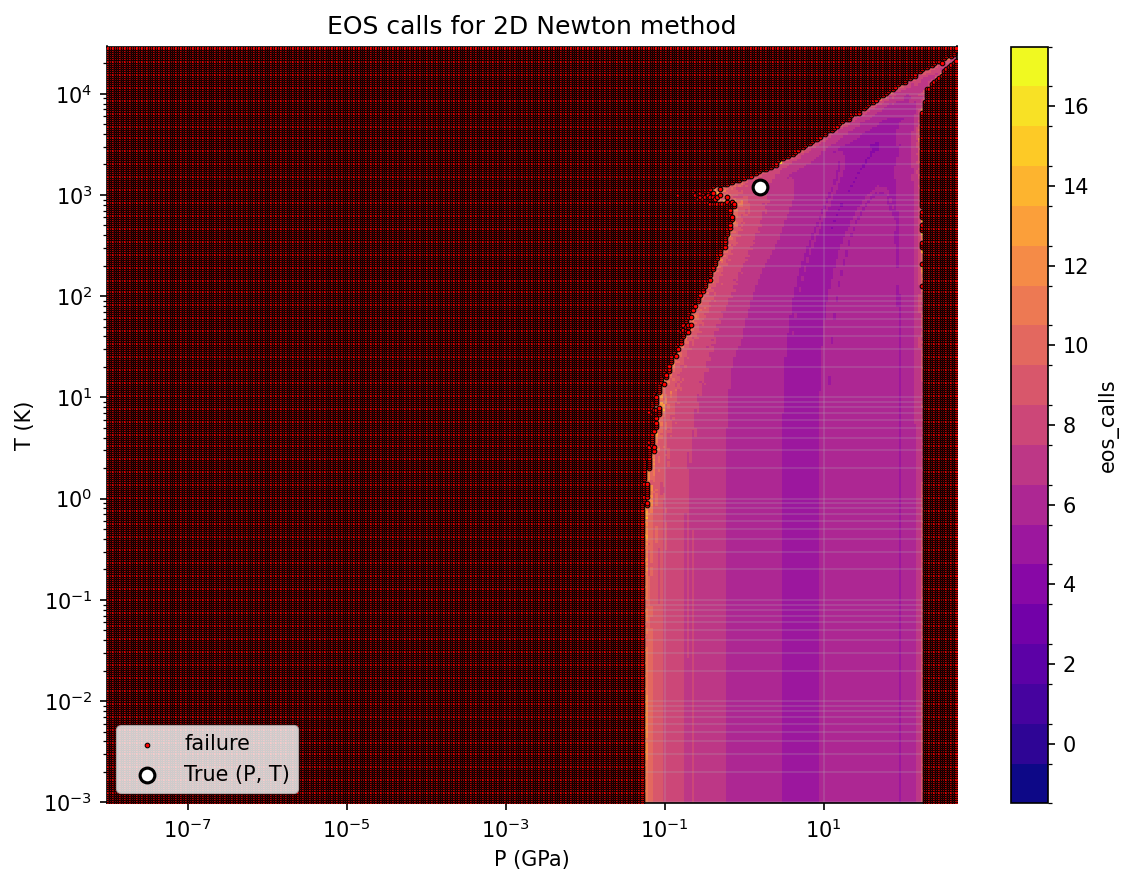}
    \caption{Iteration plots for five material problem in Section~\ref{sec:grid_5mat_test}.
    Results for 2D Newton method omitted due to high failure rate.}
    \label{fig:grid_5mat_test}
\end{figure}

\subsubsection{Test 4} \label{sec:grid_davis_test}
Next we run a test using reactant Davis ($Y_1 = 0.3$) and product Davis ($Y_2 = 0.7$).
The pressure-temperature domain is the same as the one provided in Section~\ref{sec:grid_5mat_test}.
The true solution is chosen to be $P_0 = \SI{1.6}{\giga\pascal}$ and $T_0 = \SI{1200}{\kelvin}$.
Results are presented in Figure~\ref{fig:grid_davis_test}.
We see that all methods except for the Newton-bisection method are not robust to the initial guess for this problem.
We see that the damped Newton appears to be more robust than the regular 2D Newton method, while the cyclic method and the damped Newton method have different regions for which the initial guess will converge.
\begin{figure}
    \centering
    \includegraphics[width=0.48\linewidth]{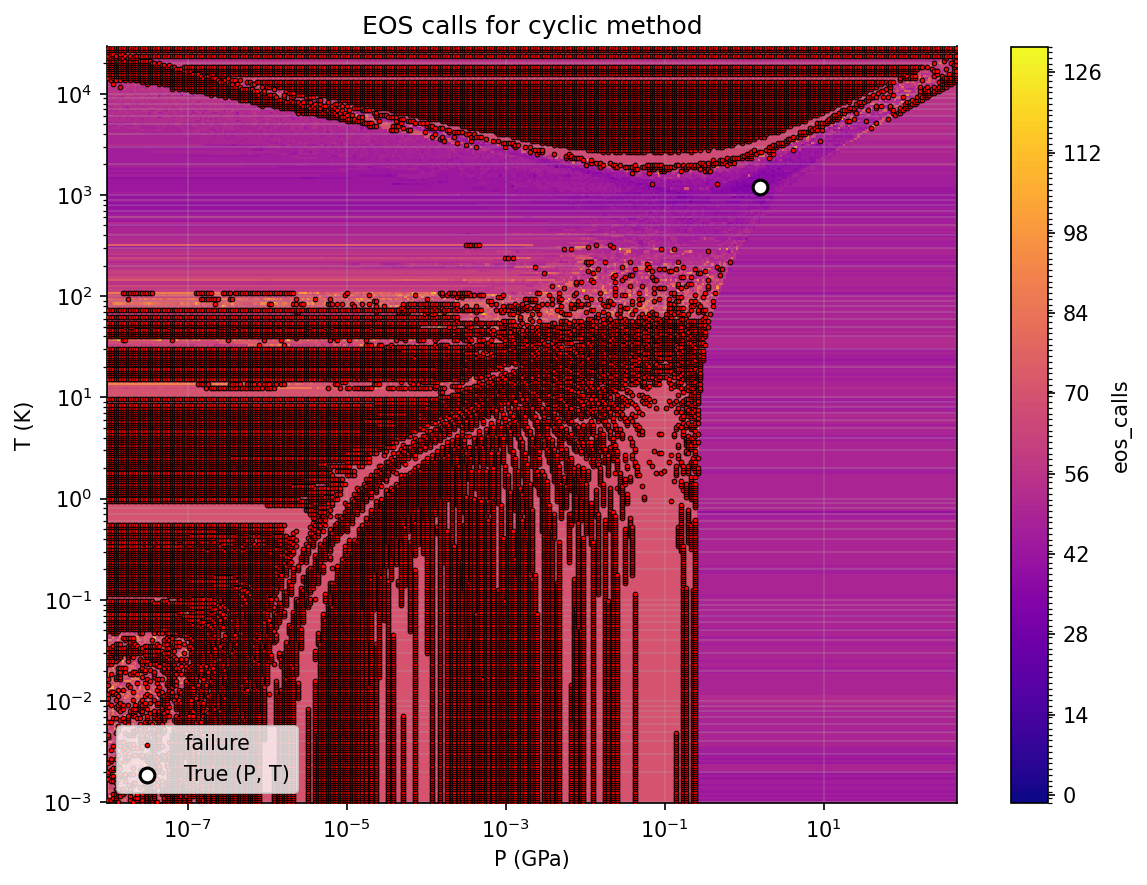}
    \includegraphics[width=0.48\linewidth]{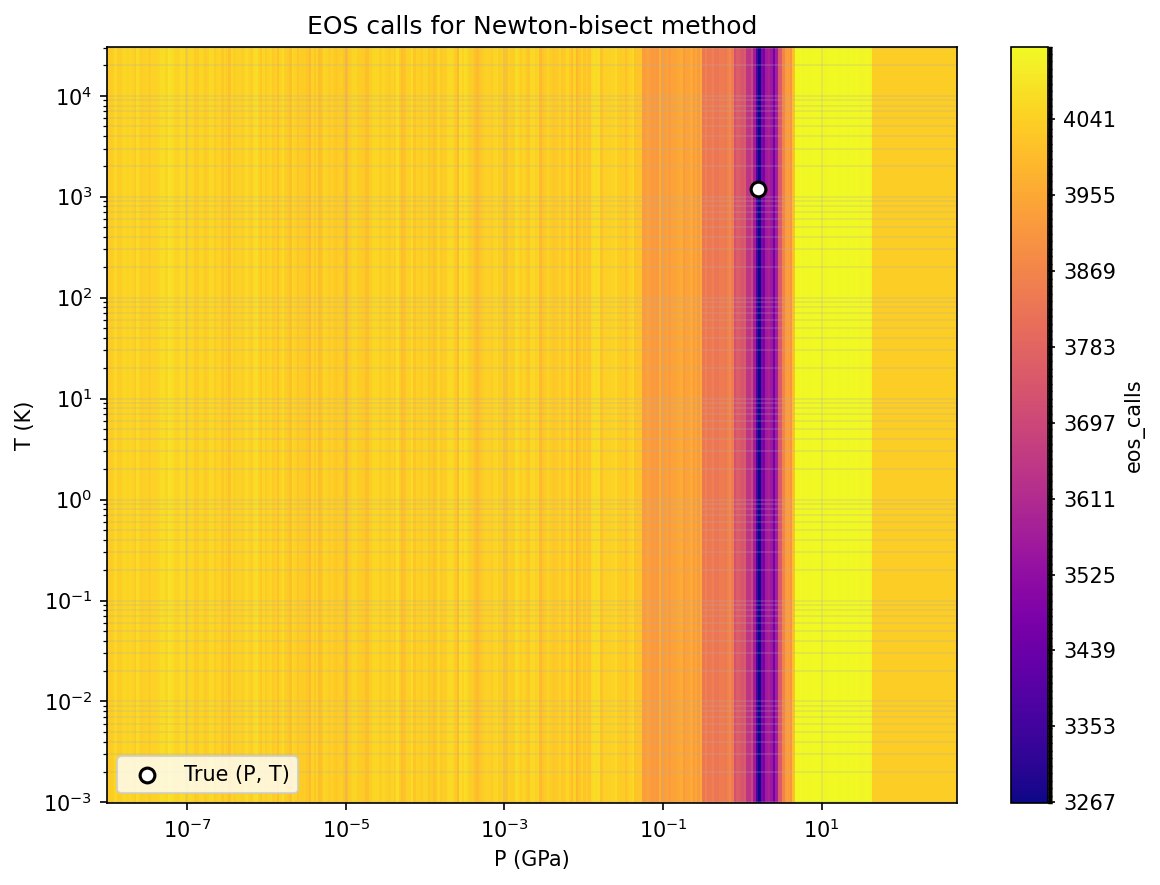}
    \includegraphics[width=0.48\linewidth]{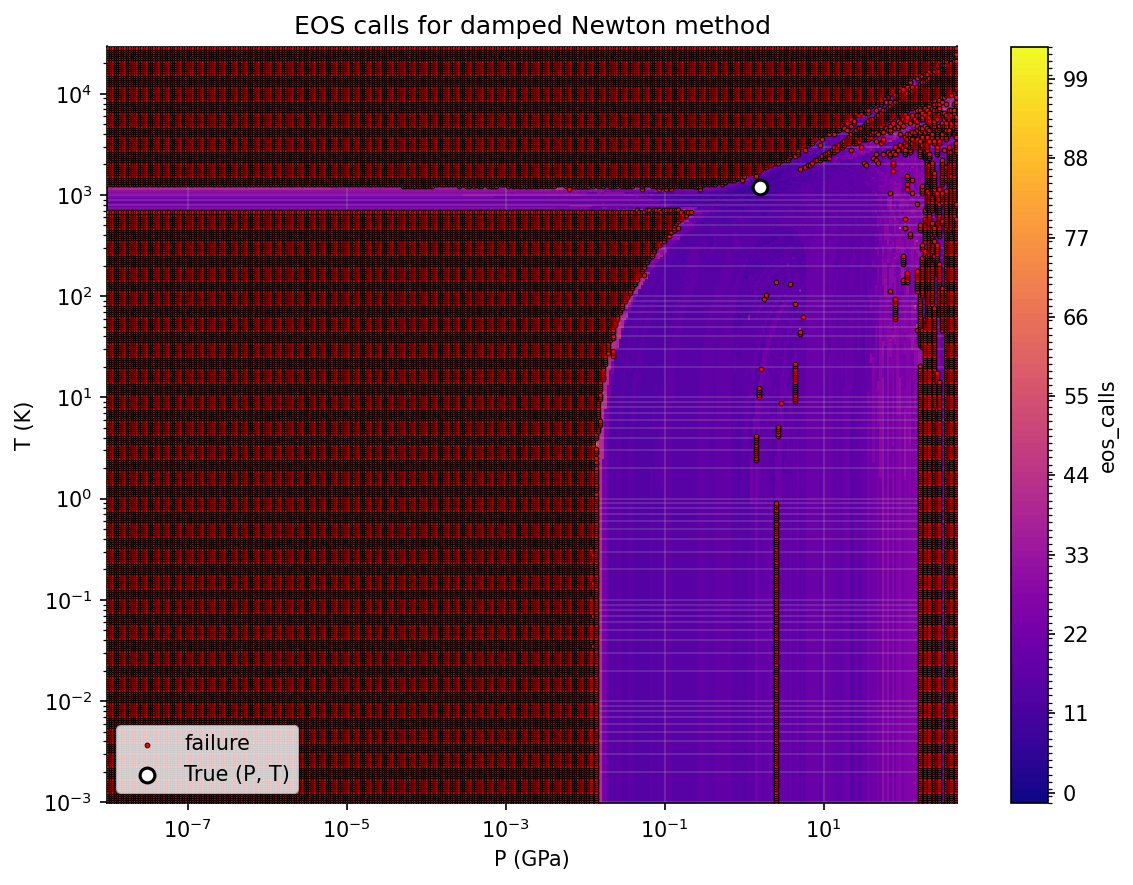}
    \includegraphics[width=0.48\linewidth]{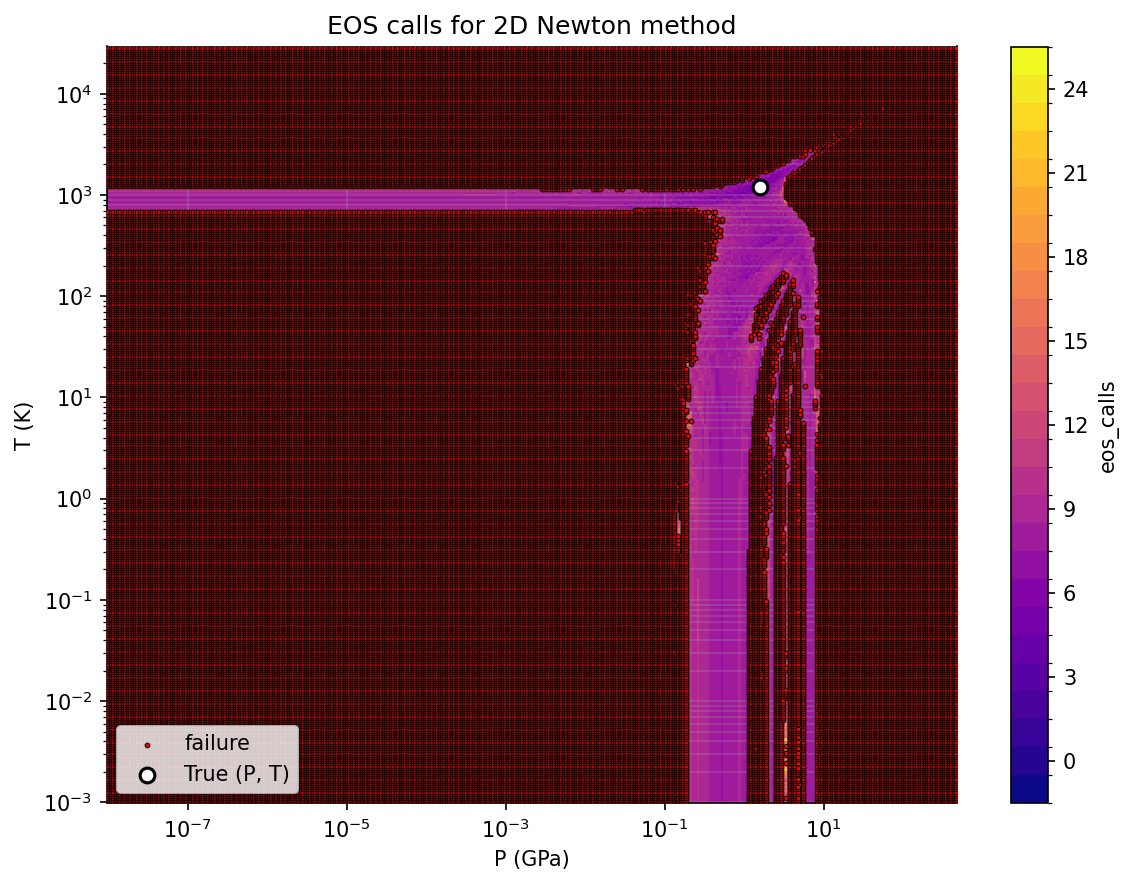}
    \caption{Iteration plots for reactant and product Davis mixture problem from Section~\ref{sec:grid_davis_test}.}
    \label{fig:grid_davis_test}
\end{figure}

%% file: conclusion.tex
\section{Conclusion}
We have analyzed the pressure-temperature equilibrium closure model imposed on the four-equation model for general equations of state that are thermodynamically stable.
We covered the mixture thermodynamic derivatives and various other thermodynamic properties.
In particular, the admissible set was identified in Definition~\ref{def:pte_admissible_set} and was found to be a convex set owing to Theorem~\ref{thm:pte_convex_set}.
Assuming the state vector belongs to the admissible set, the existence and uniqueness of solutions to the PTE system was proven in Theorem~\ref{thm:existence}.
These theoretical results establish the foundation for development of structure-preserving/invariant-domain preserving numerical methods.

We then introduced a possible method for constructing a tabular approximation of an equation of state in Section~\ref{sec:tabular_approximation} in order to avoid expensive solves for $\rho(P,T)$.
A transformation of the PTE system is provided which aides the nonlinear solvers.
Then a novel general nonlinear solver was introduced, the \textit{cyclic method}, which was applied to the PTE system.

The robustness and computational efficiency of the cyclic method was demonstrated against other standard iterative solvers.
While the cyclic method is more robust to the starting location, the 2D Newton method is computationally more efficient and is still robust when providing the previous solution to PTE as the initial guess as shown in Section~\ref{sec:transition_problems}.
The closing strategy to solve the PTE system is to first attempt the 2D Newton solve and switch to the cyclic method if the first attempt fails; then as a last resort, one can use a complete bisection method if both methods fail.
Since if the standard 2D Newton method fails, it is able to fail very quickly.
While the cyclic method may not outshine the traditional Newton method, it offers a new perspective on solving nonlinear systems from $\Real^2$ to $\Real^2$ and could be applied to other problems where good initial guess is unavailable.

%% file: appendix.tex
\section{Calculus}
For easy reference, we provide some standard Calculus results in this appendix.
\begin{theorem}[Clairaut-Schwarz] \label{thm:clairaut_shwarz}
    Let $\Omega \subset \Real^2$ be open and $f : \Omega \to \Real$.
    Let $(x_0, y_0) \in \Omega$, if $\partial_{x} \partial_{y} f$ and $\partial_y \partial_x f$ exist and are continuous in an open neighborhood, $\calU \subset \Omega$, about $(x_0, y_0)$. 
    Then the mixed partial derivatives commute at $(x_0, y_0)$; that is,
    \begin{equation}
        \frac{\partial}{\partial x} \Big( \pdv{f}{y} \Big)_x(x_0, y_0) = \frac{\partial}{\partial y} \Big( \pdv{f}{x} \Big)_y(x_0, y_0).
    \end{equation}
\end{theorem}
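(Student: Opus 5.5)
The statement is the Clairaut--Schwarz theorem: equality of mixed partials at $(x_0,y_0)$ when both mixed partials exist and are continuous near that point. I will prove it with the classical second-difference argument and the mean value theorem applied twice.

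Fix $h,k\neq 0$ small enough that the closed rectangle with corners $(x_0,y_0)$ and $(x_0+h,y_0+k)$ lies in $\calU$. Define the second difference
\[
\Delta(h,k) \eqq f(x_0+h,y_0+k)-f(x_0+h,y_0)-f(x_0,y_0+k)+f(x_0,y_0).
\]

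For the first evaluation, set $\varphi(x)\eqq f(x,y_0+k)-f(x,y_0)$, so that $\Delta=\varphi(x_0+h)-\varphi(x_0)$.
\begin{enumerate}
    \item Existence of $\partial_y\partial_x f$ on $\calU$ presupposes that $\partial_x f$ exists there. Hence $\varphi$ is differentiable, and the mean value theorem gives $\Delta = h\,\varphi'(\xi)$ for some $\xi$ between $x_0$ and $x_0+h$.
    \item Write $\varphi'(\xi)=\partial_x f(\xi,y_0+k)-\partial_x f(\xi,y_0)$.
    \item The map $y\mapsto \partial_x f(\xi,y)$ is differentiable with derivative $\partial_y\partial_x f$. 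A second application of the mean value theorem gives $\Delta = hk\,\partial_y\partial_x f(\xi,\eta)$ for some $\eta$ between $y_0$ and $y_0+k$.
\end{enumerate}

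For the second evaluation, set $\psi(y)\eqq f(x_0+h,y)-f(x_0,y)$ and repeat the same two steps in the other order. This gives $\Delta = hk\,\partial_x\partial_y f(\xi',\eta')$ for some point $(\xi',\eta')$ in the same rectangle.

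Equating the two expressions and dividing by $hk\neq0$ yields
\[
\partial_y\partial_x f(\xi,\eta)=\partial_x\partial_y f(\xi',\eta').
\]
Now let $(h,k)\to(0,0)$. Both $(\xi,\eta)$ and $(\xi',\eta')$ then tend to $(x_0,y_0)$, and continuity of the two mixed partials at that point gives the claimed equality.

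The argument is standard, and no step is a genuine obstacle. The one point requiring care is that each application of the mean value theorem needs differentiability along a segment. This is guaranteed because the existence of each mixed partial throughout $\calU$ entails the existence of the corresponding first partial there. The intermediate points depend on $(h,k)$, but this is harmless since only their convergence to $(x_0,y_0)$ is used.
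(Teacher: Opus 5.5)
Your proof is correct and complete. The second-difference argument uses two applications of the mean value theorem in each order, and each application is justified because existence of a mixed partial on the neighborhood forces existence of the corresponding first partial there. This is the standard textbook proof.

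The paper gives no proof of this statement. It is recorded in the appendix only as a standard calculus fact, later invoked for the Maxwell relations, so there is no competing route to compare. One small observation: your argument uses continuity of the two mixed partials only at $(x_0,y_0)$ itself, with mere existence elsewhere in the neighborhood, so it proves slightly more than the version quoted.
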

The application of Theorem~\ref{thm:clairaut_shwarz} to the free energies is often referred to as the \textit{Maxwell relations} in the thermodynamics literature.
\begin{theorem}[Implicit function theorem] \label{thm:implicit_func}
    Let $\Omega \subset \Real^{n+m}$ be open and $f : \Omega \to \Real^m$ is some function.
    For $(\bx_0, \by_0) \in \Omega$, assume that $\det(D_\by f(\bx_0, \by_0)) \neq 0$, $f(\bx_0, \by_0) = 0$, and that there exists an open neighborhood, $\calU \subset \Omega$ about $(\bx_0, \by_0)$ such that $f \in C^1(\calU)$.
    Then there exists open neighborhoods $\calV \subset \Real^{n}$ and $\calW \subset \Real^m$ with $\bx_0 \in \calV$ and $\by_0 \in \calW$ and a unique function, $g \in C^1(\calV; \calW)$ such that, $g(\bx_0) = \by_0$ and $f(\bx, g(\bx)) = 0$ for all $\bx \in \calV$.
\end{theorem}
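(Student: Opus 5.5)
We would prove Theorem~\ref{thm:implicit_func} with a contraction-mapping argument, in the spirit of the one-dimensional Newton steps used throughout Section~\ref{sec:pte_solving}. Write $A \eqq D_\by f(\bx_0,\by_0)$, which is invertible by hypothesis. For fixed $\bx$ near $\bx_0$, define
\begin{equation*}
    \Phi_\bx(\by) \eqq \by - A^{-1} f(\bx,\by).
\end{equation*}
Solving $f(\bx,\by)=0$ is then equivalent to finding a fixed point of $\Phi_\bx$. The plan is to show that $\Phi_\bx$ is a contraction on a closed ball $\overline{B}_r(\by_0)$, uniformly in $\bx$ ranging over a small ball $\calV = B_\delta(\bx_0)$.

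The key steps are as follows. First, we would note that
\begin{equation*}
    D_\by\Phi_\bx(\by) = A^{-1}\big(A - D_\by f(\bx,\by)\big).
\end{equation*}
Since $f\in C^1(\calU)$, we can choose $r,\delta$ small enough that $\Vert D_\by\Phi_\bx(\by)\Vert \le \tfrac12$ on $B_\delta(\bx_0)\times \overline{B}_r(\by_0)\subset\calU$. By the mean value inequality this gives the Lipschitz bound $\Vert\Phi_\bx(\by)-\Phi_\bx(\by')\Vert\le\tfrac12\Vert\by-\by'\Vert$.

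Second, we show that $\Phi_\bx$ maps the ball into itself. We have
\begin{equation*}
    \Vert\Phi_\bx(\by)-\by_0\Vert \le \tfrac12 r + \Vert A^{-1}f(\bx,\by_0)\Vert.
\end{equation*}
Because $f(\bx_0,\by_0)=0$ and $f$ is continuous, we can shrink $\delta$ so that the last term is at most $r/2$. The Banach fixed point theorem then yields, for each $\bx\in\calV$, a unique $g(\bx)\in\overline{B}_r(\by_0)$ with $f(\bx,g(\bx))=0$ and $g(\bx_0)=\by_0$. Setting $\calW = B_r(\by_0)$ and, if needed, shrinking $\delta$ once more so that $g(\bx)$ lies in the open ball gives the neighborhoods in the statement. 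Uniqueness within $\calW$ is immediate from the contraction property.

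Third, we establish regularity of $g$. Continuity, in fact local Lipschitz continuity, follows from the estimate
\begin{equation*}
    \Vert g(\bx)-g(\bx')\Vert \le 2\Vert A^{-1}\Vert\,\Vert f(\bx,g(\bx'))-f(\bx',g(\bx'))\Vert.
\end{equation*}
This is obtained by comparing the fixed point equations for $\bx$ and $\bx'$ and absorbing the $\tfrac12$-contraction term.

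The main obstacle is the final step, showing $g\in C^1$. We would first shrink $\delta$ further so that $D_\by f(\bx,g(\bx))$ remains invertible on $\calV$; this is possible because the determinant is continuous and nonzero at $(\bx_0,\by_0)$. We then differentiate to first order. Using differentiability of $f$ at $(\bx,g(\bx))$ together with the Lipschitz bound on $g$, we get
\begin{equation*}
    0 = f(\bx+\bh,g(\bx+\bh)) - f(\bx,g(\bx)) = D_\bx f\,\bh + D_\by f\,\big(g(\bx+\bh)-g(\bx)\big) + o(\Vert\bh\Vert).
\end{equation*}
Applying $(D_\by f)^{-1}$ identifies
\begin{equation*}
    Dg(\bx) = -\big(D_\by f(\bx,g(\bx))\big)^{-1} D_\bx f(\bx,g(\bx)).
\end{equation*}
This expression is continuous, being a composition of continuous maps and matrix inversion, so $g\in C^1(\calV;\calW)$. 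The care needed here is in making the $o(\Vert\bh\Vert)$ remainder legitimate; the a priori Lipschitz bound on $g$ from the third step is exactly what allows this. This derivative formula is also the multivariable form of the cyclic rule (Corollary~\ref{cor:cyclic_rule}) used throughout the paper.
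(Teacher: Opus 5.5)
The paper gives no proof of this theorem. It appears in the appendix only as a standard calculus fact quoted ``for easy reference'' to justify Corollary~\ref{cor:cyclic_rule}, so there is no argument of the authors' to compare yours against.

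Your contraction-mapping proof is the standard one and is correct. It gives slightly more than the statement asks for:
\begin{itemize}
\item uniqueness of the zero of $f(\bx,\cdot)$ in the whole closed ball $\overline{B}_r(\by_0)$, so $g$ is unique among all maps $\calV\to\calW$, not just among $C^1$ ones;
\item the explicit formula $Dg=-(D_\by f)^{-1}D_\bx f$, which for $n=m=1$ is exactly the cyclic rule the paper goes on to use.
\end{itemize}

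Two steps can be streamlined. First, putting $g(\bx)$ in the open ball $\calW=B_r(\by_0)$ does not need the continuity of $g$ from your third step. The fixed-point equation already gives $\Vert g(\bx)-\by_0\Vert\le\tfrac12\Vert g(\bx)-\by_0\Vert+\Vert A^{-1}f(\bx,\by_0)\Vert$, that is, $\Vert g(\bx)-\by_0\Vert\le 2\Vert A^{-1}f(\bx,\by_0)\Vert$. So it suffices to choose $\delta$ with $\Vert A^{-1}f(\bx,\by_0)\Vert<r/2$ strictly on $B_\delta(\bx_0)$. Second, the extra shrinking to keep $D_\by f(\bx,g(\bx))$ invertible is unnecessary. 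Your bound $\Vert D_\by\Phi_\bx(\by)\Vert\le\tfrac12$ already makes $A^{-1}D_\by f(\bx,\by)$ (the identity minus $D_\by\Phi_\bx(\by)$) invertible by a Neumann series.
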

A consequence of the implicit function theorem is the \textit{cyclic rule} (also referred to as the \textit{triple product rule}).
\begin{corollary}[Cyclic rule] \label{cor:cyclic_rule}
    Under the assumptions of Theorem~\ref{thm:implicit_func} for $n = m = 1$ and assuming that $\partial_x f(x_0,y_0) \neq 0$,
    there exists $\widetilde{\calV} \subset \calV$ such that following holds
    \begin{equation} \label{eq:cyclic_rule}
        \frac{\big( \pdv{f}{y} \big)_x \big( \pdv{y}{x} \big)_f}{\big( \pdv{f}{x} \big)_y } = -1,
    \end{equation}
    where $y = g(x)$ for all $x \in \widetilde{\calV}$ and $y \in g(\widetilde{\calV})$.
    Alternatively, if $\partial_x f(x_0, y_0) = 0$, then $\big(\pdv{y}{x}\big)_f = g'(x_0) = 0$.
\end{corollary}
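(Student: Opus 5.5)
The plan is to differentiate the implicit relation produced by Theorem~\ref{thm:implicit_func} along the graph of $g$, and then divide by the two partial derivatives. This requires first shrinking the neighborhood so that neither partial derivative vanishes there.

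First, take $n = m = 1$ in Theorem~\ref{thm:implicit_func}. Since $D_y f(x_0,y_0) = \partial_y f(x_0,y_0) \neq 0$ and $f \in C^1(\calU)$, we obtain open sets $\calV \ni x_0$, $\calW \ni y_0$ and a unique $g \in C^1(\calV;\calW)$ with $g(x_0) = y_0$ and $f(x,g(x)) = 0$ for all $x \in \calV$. Shrinking $\calV$ if necessary, we may assume the graph $\{(x,g(x)) : x \in \calV\}$ lies in $\calU$, which is possible by continuity of $g$.

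Next, construct $\widetilde{\calV}$. The maps $x \mapsto \partial_x f(x,g(x))$ and $x \mapsto \partial_y f(x,g(x))$ are continuous on $\calV$, since they are compositions of continuous maps. By hypothesis both are nonzero at $x_0$. Hence there is an open interval $\widetilde{\calV} \subset \calV$ containing $x_0$ on which both are nonzero. This step is the only real obstacle: the statement is pointwise at $(x_0,y_0)$, but we need nonvanishing along the whole local level curve. Continuity of the composite functions handles it.

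Then, on $\widetilde{\calV}$, differentiate the identity $f(x,g(x)) = 0$ by the chain rule, which is valid since $f$ and $g$ are $C^1$:
\begin{equation*}
\Big(\pdv{f}{x}\Big)_y + \Big(\pdv{f}{y}\Big)_x\, g'(x) = 0 .
\end{equation*}
Identify $g'(x)$ with $\big(\pdv{y}{x}\big)_f$, the derivative of $y$ along the level set of $f$. Because $\big(\pdv{f}{x}\big)_y \neq 0$ on $\widetilde{\calV}$, we may divide the identity by it. Rearranging gives \eqref{eq:cyclic_rule}, with $y = g(x)$ for $x \in \widetilde{\calV}$.

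Finally, for the alternative case $\partial_x f(x_0,y_0) = 0$, evaluate the same chain-rule identity at $x_0$. This gives $\partial_y f(x_0,y_0)\, g'(x_0) = 0$. Since $\partial_y f(x_0,y_0) \neq 0$, it follows that $\big(\pdv{y}{x}\big)_f = g'(x_0) = 0$. No shrinking of the neighborhood is needed in this case.
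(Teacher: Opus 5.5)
Your proof is correct and is the argument the paper intends. The paper gives no separate proof and only remarks that the cyclic rule is a consequence of the implicit function theorem; your derivation makes that precise by differentiating $f(x,g(x))=0$ with the chain rule on a neighborhood of $x_0$ where $\partial_x f(x,g(x))\neq 0$, and handles the degenerate case by evaluating the same identity at $x_0$. Also requiring $\partial_y f\neq 0$ on the smaller neighborhood is harmless but not needed, since you only ever divide by $\partial_x f$.
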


\begin{remark}[Cyclic rule style]
    Typically, the cyclic rule is written as 
    \begin{equation}
        \Big( \pdv{f}{y} \Big)_x \Big( \pdv{x}{f} \Big)_y \Big( \pdv{y}{x} \Big)_f = -1.
    \end{equation}
    We have written it in the form \eqref{eq:cyclic_rule} so as to avoid some confusion and abuse of notation with $x$.
\end{remark}